\def\@settitle{\baselineskip14\p@\relax
    \flushleft{\Large\bfseries \@title}
}
\def\@setauthors{%
  \begingroup
  \def\thanks{\protect\thanks@warning}%
  \trivlist
  \centering\footnotesize \@topsep30\p@\relax
  \advance\@topsep by -\baselineskip
  \item\relax
  \author@andify\authors
  \def\\{\protect\linebreak}%
  \flushleft{\itshape by \authors\\ with an appendix by Nir \textsc{Lazarovich}}%
  \ifx\@empty\contribs
  \else
    ,\penalty-3 \space \@setcontribs
    \@closetoccontribs
  \fi
  \endtrivlist
  \endgroup
}
\def\maketitle{\par
  \@topnum\z@ 
  \@setcopyright
  \thispagestyle{firstpage}
  \ifx\@empty\shortauthors \let\shortauthors\shorttitle
  \else \andify\shortauthors
  \fi
  \@maketitle@hook
  \begingroup
  \@maketitle
  \author@andify\authors
  \toks@\@xp{\authors}\@temptokena\@xp{\shorttitle}%
  \toks4{\def\\{ \ignorespaces}}
  \edef\@tempa{%
    \@nx\markboth{\the\toks4
      \the\toks@}{\the\@temptokena}}%
  \@tempa
  \endgroup
  \c@footnote\z@
  \@cleartopmattertags
}
\def\ps@headings{\ps@empty
  \def\@evenhead{%
    \setTrue{runhead}%
    \normalfont\scriptsize
    \llap{\thepage\hskip1em}
    \def\thanks{\protect\thanks@warning}%
    \leftmark{}{}\hfil}%
  \def\@oddhead{%
    \setTrue{runhead}%
    \normalfont\scriptsize \hfil
    \def\thanks{\protect\thanks@warning}%
    \rightmark{}{}\rlap{\hskip1em\thepage}}%
  \let\@mkboth\markboth
}
\def\ParFact{0.4}\fi
\def\@setparskip{\parskip=\ParFact\baselineskip \advance\parskip by 0pt plus 2pt}
\let\oldtableofcontents=\tableofcontents
\def\tableofcontents{\parskip=0pt \oldtableofcontents \@setparskip}
\def\section{\@startsection{section}{1}{\z@}%
    {-21dd plus-4pt minus-4pt}{10.5dd plus 4pt
     minus4pt}{\normalsize\bfseries\boldmath}}
\def\specialsection{\@startsection{section}{1}{\z@}%
    {-21dd plus-4pt minus-4pt}{10.5dd plus 4pt
     minus4pt}{\normalsize\bfseries\boldmath}}
\def\presp#1#2%
\setlist[enumerate]{labelindent=--.5in,leftmargin=0pt}
\setlist[itemize]{labelindent=--.5in,leftmargin=0pt}
\definecolor{green}{RGB}{0,127,0}
\definecolor{red}{RGB}{191,0,0}
\theoremstyle{plain}
\newtheorem{theorem}{Theorem}[section]
\newtheorem{lemma}[theorem]{Lemma}
\newtheorem{corollary}[theorem]{Corollary}
\newtheorem{proposition}[theorem]{Proposition}
\numberwithin{figure}{section}
\theoremstyle{definition}
\newtheorem{example}[theorem]{Example}
\newtheorem{remark}[theorem]{Remark}
\newtheorem{question}[theorem]{Question}
\numberwithin{equation}{section}
\DeclareMathOperator{\stab}{Stab}
\DeclareMathOperator{\aut}{Aut}
\DeclareMathOperator{\aaut}{AAut}
\DeclareMathOperator{\homeo}{Homeo}
\DeclareMathOperator{\supp}{supp}
\DeclareMathOperator{\conv}{Conv}
\def\bound{six}
\def\boundd{nine}
\newcommand{\B}[1]{{\mathbf #1}}
\newcommand{\C}[1]{{\mathcal #1}}
\newcommand{\G}[1]{{\llbracket #1\rrbracket}}
\newcommand{\textSC}[1]{\textsc{\MakeLowercase{#1}}}
\newcommand{\Th}{G} 
\newcommand{\Fh}{F} 
\newcommand{\Nr}{N} 
\newcommand{\Zq}{\B Z[\nicefrac1q]}
\newcommand{\lhdneq}{\mathrel{\ooalign{$\lneq$\cr\raise.22ex\hbox{$\lhd$}\cr}}}
\newcommand{\id}{\mathrm{id}}
\begin{document}

\title{Uniform simplicity of groups with proximal action}
\author{{\'S}wiatos{\l}aw R.~\textsc{Gal}}
\address{S.R.~Gal --- \normalfont Instytut Matematyczny Uniwersytetu Wroc{\l}awskiego, pl. Grunwaldzki \nicefrac24, 50-384 Wroc{\l}aw, Poland
\& Weizmann Institute of Science, Rehovot 76100, Israel}
\email{sgal@math.uni.wroc.pl}
\thanks{The research leading to these results has received funding from the European Research Council under the European Unions Seventh Framework Programme (FP7/2007- 2013)/ERC Grant Agreement No. 291111. The first author partially supported by Polish National Science Center \textSC{(NCN)} grant \textSC{2012/06/A/ST1/00259}
and the European Research Council grant No. 306706.}
\author{Jakub \textsc{Gismatullin}}
\address{J.~Gismatullin --- \normalfont Instytut Matematyczny Uniwersytetu Wroc{\l}awskiego, pl. Grunwaldzki \nicefrac24, 50-384 Wroc{\l}aw, Poland
\& Instytut Matematyczny Polskiej Akademii Nauk, ul. {\'S}niadeckich 8,
00-656 Warszawa, Poland}
\email{gismat@math.uni.wroc.pl}
\thanks{\noindent The second author is partially supported by the \textSC{NCN} grants
\textSC{2014/13/D/ST1/03491, 2012/07/B/ST1/03513}.}
\address{N.~Lazarovich --- \normalfont Departement Mathematik, Eidgen\"ossische Technische Hochschule Z\"urich, R\"amistrasse 101, 8092 Z\"urich, Shweiz}
\email{nir.lazarovich@math.ethz.ch}

\keywords{Boundedly simple groups, Trees, Automorphism groups, Sphereomorphisms, Almost
 automorphisms, Higman-Thomson groups, Neretin group}

\subjclass[2010]{Primary \textSC{20E08, 20E32}; Secondary \textSC{20F65, 22E40}.}

\begin{abstract}
We prove that groups acting boundedly and order-primitively on linear orders or
acting extremely proximally on a Cantor set (the class including various Higman-Thomson groups;
Neretin groups of almost automorphisms of regular trees, also called groups of
spheromorphisms; the groups of quasi-isometries and almost-isometries of regular trees) are uniformly simple. Explicit bounds are provided.
\end{abstract}

\maketitle

\tableofcontents

\section{Introduction}

Let $\Gamma$ be a group.
It is called \textbf{$N$-uniformly simple} if for every nontrivial $f\in\Gamma$
and nontrivial conjugacy class $C\subset\Gamma$ the element $f$ is the product
of at most $N$ elements from $C^{\pm1}$.
A group is \textbf{uniformly simple} if it is $N$-uniformly simple for some
natural number $N$.
Uniformly simple groups are called sometimes, by other authors, \textit{groups
with finite covering number} or \textit{boundedly simple} groups (see e.g.
\cite{MR1710745, MR1919378, MR3085032}).  
We call $\Gamma$ \textbf{boundedly simple}, if $N$ is allowed to depend on $C$.
The purpose of this paper is to prove results on uniform simplicity, in particular Theorems \ref{thm:doubly-transitive}, \ref{thm:Neretin}, and \ref{thm:tree} below, for
a number of naturally occurring infinite permutation groups.

Every uniformly simple group is simple. It is known that many groups with
geometric or combinatorial origin are simple. In this paper we prove that,
in fact, many of them are uniformly simple. 

Below are our main results.  

Let $(I,\leq)$ be a linearly ordered set.  Let
$\mathrm{Aut}(I,\leq)$ denote the group of order preserving bijections of $I$.
We say that $g\in\mathrm{Aut}(I,\leq)$ is \textbf{boundedly supported} if there are
$a, b\in I$ such that $g(x)\neq x$ only if $a<x<b$.  The subgroup of boundedly
supported elements of $\mathrm{Aut}(I,\leq)$ will be denoted by $\mathrm{B}(I,\leq)$.

\begin{theorem}[Theorem \ref{thm:order} below]\label{thm:doubly-transitive}
Assume that $\Gamma<\mathrm{B}(I,\leq)$ is proximal on a linearly ordered set $(I,\leq)$
(i.e. for every $a<b$ and $c<d$ from $I$ there exists $g\in\Gamma$ such that $g(a)<c<d<g(b)$).
Then its commutator group $\Gamma'$ is
\bound-uniformly simple and the commutator width of this group is at most two.
\end{theorem}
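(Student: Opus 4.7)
The plan is to factor the bound as $6 = 2 \cdot 3$ and prove two separate claims whose combination yields the theorem: \textbf{(A)} every element of $\Gamma'$ is a product of at most two commutators of $\Gamma'$ (the commutator-width bound), and \textbf{(B)} every commutator in $\Gamma$ is expressible as a product of three elements of $C^{\pm 1}$ for any nontrivial conjugacy class $C$ of $\Gamma'$.

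For \textbf{(A)}, I would begin with a nontrivial $f \in \Gamma'$ with $\supp(f) \subset (a, b)$ and fragment $f$ along a chosen point $m \in (a, b)$ into $f = f_L \cdot f_R$, where each factor has support in a proper sub-interval (possibly absorbing a correction element into one of the factors when $m$ is not a fixed point of $f$). Proximality supplies \emph{displacement} elements: feeding $a_0 = q$, $c_0 = p$ with $p < q$ into the defining property and inverting, one obtains $h \in \Gamma$ with $h(p) > q$, so that $h\bigl((p, q)\bigr)$ is disjoint from $(p, q)$. Plugging such displacements into the classical Mather--Anderson telescoping argument then realizes each of $f_L, f_R$ as a commutator in $\Gamma'$: given $u$ supported in $(a, m)$ and $s$ with $s\bigl((a, m)\bigr)$ disjoint from $(a, m)$, the finite products $w_n = \prod_{i=0}^{n-1} s^i u s^{-i}$ satisfy $[s, w_n] = s^n u s^{-n} \cdot u^{-1}$, and proximality lets us choose $s$ so that $s^n u s^{-n}$ can be absorbed into a second commutator. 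The natural way this splits is into two commutators, which is precisely why one does not in general get commutator width one.

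For \textbf{(B)}, the identity $[x, y] = (xyx^{-1}) \cdot y^{-1}$ already exhibits $[x, y]$ as a product of two conjugates of $y^{\pm 1}$. I must replace those conjugates of $y$ by elements of $C^{\pm 1}$ at the cost of only one correction factor. Using proximality and the displacement statement above, I would find $t_1, t_2 \in \Gamma$ such that $g_1 := t_1 g^{\epsilon_1} t_1^{-1}$ and $g_2 := t_2 g^{\epsilon_2} t_2^{-1}$ (with $\epsilon_i = \pm 1$) agree with $xyx^{-1}$ and $y^{-1}$ on matching portions of their respective supports; then the residual $r := (g_1 g_2)^{-1} [x, y]$ lies in $\Gamma'$ with strictly smaller and simpler support, on which proximality allows us to conjugate $g$ to a third element $g_3 := t_3 g^{\epsilon_3} t_3^{-1}$ equal to $r$ outright. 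This gives $[x, y] = g_1 g_2 g_3$.

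The main obstacle is the Mather--Anderson step in \textbf{(A)}: in a group like $\mathrm{B}(I, \leq)$ which is closed under infinite products of disjointly supported conjugates, one can write a one-sided-supported element as a \emph{single} commutator, but in an abstract proximal $\Gamma$ only finite products are available, so the telescope leaves a genuine correction that forces the passage to two commutators. The secondary difficulty, aligning the supports of conjugates of $g$ in \textbf{(B)} so that the three-factor identity holds on the nose, is resolved by applying proximality simultaneously to several intervals and exploiting the rigidity of order-preserving bijections of a linear order.
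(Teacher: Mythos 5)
Your step \textbf{(B)} is where the proposal breaks down, and it does not look repairable along the lines you sketch. Proximality is a statement about supports only: it lets you nest and translate intervals, so it can arrange \emph{where} a conjugate $tg^{\pm1}t^{-1}$ acts, but it gives no control whatsoever over \emph{what} that conjugate does there. There is therefore no way to choose $t_1,t_2$ so that $t_1g^{\epsilon_1}t_1^{-1}$ and $t_2g^{\epsilon_2}t_2^{-1}$ ``agree with $xyx^{-1}$ and $y^{-1}$ on matching portions of their supports,'' and the final step --- conjugating $g$ to an element $g_3$ \emph{equal} to the residual $r$ --- is simply false: $r$ is an essentially arbitrary element of $\Gamma'$ with small support, while $g_3$ must lie in the fixed conjugacy class of $g$; asserting they can be made equal is assuming a (one-conjugate!) version of the theorem you are trying to prove. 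There is also a structural reason your $3+3$ factorization is suspect: all the soft displacement/commutator machinery available here produces \emph{balanced} words, i.e.\ products of $C$-commutators $(\text{conj. of }g)(\text{conj. of }g^{-1})$, so it can never certify that a commutator is a product of an \emph{odd} number of elements of $C^{\pm1}$. A smaller but real gap sits in \textbf{(A)} as well: you fragment $f$ as $f_Lf_R$ with supports in subintervals, but an abstract proximal subgroup $\Gamma<\mathrm{B}(I,\leq)$ is not closed under such fragmentation (the ``left half'' of $f$ need not lie in $\Gamma$); this kind of patching is exactly what the paper points out is unavailable outside the Droste--Shortt setting.

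For comparison, the paper's proof avoids both problems by never trying to match maps, only supports. Given any nontrivial $g\in\Gamma'$ (a representative of the class $C$), it finds an interval $(a,b)$ with $g(a,b)\cap(a,b)=\varnothing$; for an arbitrary $f\in\Gamma'$, written as a product of $k$ commutators, the finitely generated group $H$ spanned by the entries has bounded support, which proximality conjugates inside $(a,b)$, so a conjugate of $g$ $\infty$-displaces $H$. The Burago--Ivanov--Polterovich lemma then yields $f=[\alpha,\beta][\gamma,h]$ with $h$ conjugate to $g$ (this gives commutator width $\leq 2$ without any fragmentation), and the displacement trick of Lemma \ref{lem:commutators} turns $[\alpha,\beta]$ into a product of two $g$-commutators. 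The count is thus $6=4+2$ conjugates of $g^{\pm1}$, not $3+3$; if you want to salvage your write-up, replace \textbf{(B)} by the statement ``a commutator whose entries are displaced by a conjugate of $g$ is a product of four elements of $C^{\pm1}$'' and run the displacement argument for \textbf{(A)} instead of fragmentation.
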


For the definition of a \textbf{commutator width of a group} see the beginning of Section~\ref{sec:burago}.
Observe that every doubly-transitive (i.e. transitive on ordered pairs) action is proximal.

This theorem immediately applies e.g. to $\mathrm{B}\left(\B Q,\leq\right)$ and the class of Higman-Thomson groups
$F_{q,r}$, for $q>r\geq 1$, where the latter are defined as follows.  We fix natural numbers
$q>r\geq 1$.  The \textbf{Higman-Thompson group} $\Fh_{q,r}$ is defined as
piecewise affine, order preserving transformations of $\left((0,r)\cap\Zq,\leq\right)$
whose breaking points (i.e. singularities) belong to $\Zq$
and the slopes are $q^k$, for $k\in\B Z$ (see \cite[Proposition 4.4]{MR885095}).
The \textbf{Thompson group $F$} is the group $\Fh_{2,1}$ in
the above series. Moreover, $\Fh_{q,r}$ is independent of $r$ (up to
isomorphism) \cite[4.1]{MR885095}. The Higman-Thompson groups satisfy the assumptions
of Theorem \ref{thm:doubly-transitive} due to Lemmata \ref{lem:ht},
\ref{lem:id} from Section \ref{sec:order}.

Our result implies that $\Gamma=B(\mathbf{Q},\leq)$ is six-uniformly simple.
Whereas Droste and Shortt proved in \cite[Theorem 1.1(c)]{MR1089954} that
$B(\B Q,\leq)$ is two-uniformly simple.  In fact, they proved that if
$\Gamma<\mathrm{B}(I,\leq)$ is proximal (they use the term `feebly
2-transitive' for proximal action) and additionally closed under
$\omega$-patching of conjugate elements, then $\Gamma$ is two-uniformly simple.
Thus, our Theorem \ref{thm:doubly-transitive} covers larger class of examples
than that from \cite{MR1089954} (as we assume only proximality), but with
slightly worse bound for uniform simplicity.

The uniform simplicity of the Thompson group $F=F_{2,1}$ was proven implicitly by Bardakov, Tolstykh,
and Vershinin \cite[Corollary 2.3]{MR2959420} and Burago and Ivanov
\cite{MR2478468}. Although their proofs generalise to the
general result given above, we write it down for several reasons.  Namely, in
above cited papers some special properties of the linear structure of the real line is
used, while the result is true for a general class of proximal actions on
linearly ordered sets. The Droste and Shortt argument uses $\omega$-patching,
which is not suitable for our case. Furthermore, although in
the examples mentioned above the action is doubly-transitive, the right
assumption is proximality, which is strictly weaker than double-transitivity.
In Theorem \ref{ex:proximal-not-2-transitive} we construct a bounded and proximal
transitive action which is not doubly-transitive.  This is discussed
in details in Section \ref{sec:para}.
The second reason for proving Theorem \ref{thm:doubly-transitive} is that a
topological analogue of proximality, namely extremal proximality
(see the beginning of Section \ref{sec:Cantor}),
plays a crucial role in the proofs of the subsequent results.
Extremal proximality was defined by S.~Glasner in \cite[p.  96]{MR0474243} and
\cite[p. 328]{MR0336723} for a general minimal action of a group on a compact
Hausdorff space.

In Section \ref{sec:Cantor} we go away from order preserving actions, and 
consider groups acting on a Cantor set, and also groups almost acting on trees.
The following theorem is Corollary \ref{cor:ner}(2).

\begin{theorem} \label{thm:Neretin}
The commutator subgroup $\Nr'_q$ of the Neretin group $\Nr_q$ of spheromorphisms and the commutator subgroup
$\Th'_{q,r}$ of the Higman-Thomson group $\Th_{q,r}$ are \boundd-uniformely
simple.  The commutator width of each of those groups is at most three.
\end{theorem}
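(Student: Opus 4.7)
The plan is to derive both statements of Theorem \ref{thm:Neretin} from a single abstract result, presumably Theorem \ref{thm:tree} proved earlier in Section \ref{sec:Cantor}, which asserts that the commutator subgroup of any group acting extremely proximally on a Cantor set is \boundd-uniformly simple with commutator width at most three. The geometric input needed is that both $\Nr_q$ and $\Th_{q,r}$ act naturally on a Cantor set (the boundary $\partial T_{q+1}$ of the $(q+1)$-regular tree, respectively a distinguished clopen subset of it corresponding to the $r$ roots), and that these actions are extremely proximal, in the sense that for every nonempty clopen $U$ and every proper closed $K$ there is a group element $g$ with $g(K)\subset U$. This follows from the very definition of an almost-automorphism: any prescribed bijection between two finite clopen partitions $\{\partial T_{v_i}\}$ and $\{\partial T_{w_i}\}$ of the boundary by shadows of vertices can be realised by a piecewise tree isomorphism.

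To prove the abstract theorem I would parallel the strategy of Theorem \ref{thm:doubly-transitive} but replace the order-theoretic primitive ``bounded interval support'' by ``clopen support'' and use extreme proximality in place of proximality. Given a nontrivial conjugacy class $C$ and a nontrivial $f$ in the commutator subgroup, I would first establish a displacement lemma: using extreme proximality, any nontrivial $g\in C$ can be conjugated to have support inside a prescribed clopen set $V$, and combining two disjointly supported copies of $g$ produces elements that can be used as building blocks for arbitrary clopen-supported moves. Next, by a ping-pong-style calculation analogous to the order case, every single commutator $[a,b]$ in the ambient group is a product of at most three conjugates from $C^{\pm 1}$: replace $a$ and $b$ by conjugates whose supports sit in prescribed disjoint clopens, and apply a short identity expressing $[a,b]$ in terms of $C^{\pm 1}$. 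Finally, show that every element of the commutator subgroup is a product of at most three commutators. The two bounds multiply to $3\cdot 3=\boundd$.

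The main obstacle is the commutator-width step: unlike the linearly ordered case of Theorem \ref{thm:doubly-transitive}, where any two boundedly supported elements can be ``slid'' past one another using the order to express the product as two commutators, a Cantor set has no canonical bisection and no preferred way to split $\supp f$ into halves. I would overcome this by choosing a clopen partition adapted to $\supp f$ and using extreme proximality to realise each piece of $f$ as an explicit commutator, at the cost of one extra commutator compared with the order case (hence width three rather than two). Once the abstract theorem is in hand, both items of Theorem \ref{thm:Neretin} follow by verifying extreme proximality, which is automatic for $\Nr_q$ from the prefix-exchange description of its elements and is inherited by $\Th_{q,r}$ because the Higman-Thomson group contains a dense supply of such prefix-replacement maps acting on its defining clopen subset.
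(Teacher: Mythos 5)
Your top-level strategy is the paper's: both parts follow from the abstract Cantor-set criterion (Theorem \ref{thm:C}, which underlies Theorem \ref{thm:tree}) once extreme proximality of the boundary action is checked, and your verification of extreme proximality for $\Nr_q$ and $\Th_{q,r}$ by piecewise tree isomorphisms is in the right spirit (the paper routes it through Theorem \ref{thm:parr}: minimal plus non-parabolic on the tree, and for $r\geq2$ identifies $\Th_{q,r}$ with the stabiliser of $q-r+1$ halftree ends acting on the complementary clopen). However, your sketch of the abstract engine has two genuine gaps. First, you state the criterion as applying to \emph{any} group acting extremely proximally on a Cantor set; that statement is false without the hypothesis that the group is a topological full group ($\Gamma=\G\Gamma$): the free group $\mathbf F_n$ acts extremely proximally on the boundary of its Cayley tree, yet its commutator subgroup is free, not simple. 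Fullness is not decoration: every patching element $\tau_{h,U}$ (swap $U$ with $h(U)$, identity elsewhere) used in the fragmentation, commutant and displacement lemmas lies in $\Gamma$ only because $\Gamma$ is full. The application to $\Nr_q=\G{\aut(T_q)}$ and $\Th_{q,r}$ survives because these \emph{are} full groups, but your proposed proof of the abstract theorem must invoke this hypothesis explicitly or it collapses.

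Second, the numerology $3\times 3=\text{\boundd}$ rests on the claim that every commutator is a product of at most three conjugates from $C^{\pm1}$, and this is neither proved nor what the available machinery delivers: the commuting-conjugates identity (Lemma \ref{lem:commutators}) writes a commutator as two $C$-commutators, i.e.\ \emph{four} conjugates, and no identity giving three is offered. The paper's count of nine is $1+4\cdot2$: by Lemma \ref{lem:fragmentation}(3) and Lemma \ref{lem:commutant} one conjugate $g_1\in C$ absorbs $f$ into an element $f_1$ supported in a proper clopen, and $f_1$ is a product of four $C$-commutators by Proposition \ref{prop:bip2}, which in turn needs the Burago--Ivanov--Polterovich displacement inputs: Lemma \ref{lem:bip-element} producing $h\in\Gamma_V'$ with the $h^k(U)$ pairwise disjoint ($\infty$-displacement inside a prescribed clopen), Lemma \ref{lem:bip}, and Lemma \ref{lem:perfect}. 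Likewise the width bound three is $1+2$ (one commutator from the fragmentation step plus commutator width two for the clopen-supported part via Lemma \ref{lem:bip}), not ``one extra commutator per piece of a clopen partition of $\supp f$''; your partition idea gives no control on the number of pieces. Without these displacement lemmas neither the bound \boundd{} nor the width three is justified.
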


The group $\Nr_q$ was introduced by Neretin in \cite[4.5, 3.4]{MR1209033} as
the group of sphereomorphisms (also called almost automorphisms) of a
$(q+1)$-regular tree $T_q$.  We will recall the construction in Section 
\ref{sec:Neretin}.

The \textbf{Higman-Thompson group} $\Th_{q,r}$ is defined as the group of
automorphisms of the J{\'o}nsson-Tarski algebra $V_{q,r}$
\cite[4\textSC{A}.]{MR885095}. It can also be described as a certain group of
homeomorphisms of a Cantor set \cite[p. 57]{MR885095}. Moreover, one can view
$\Th_{q,r}$ as a subgroup of $\Th_{q,2}$ and the latter as a group acting sphereomorphically
on the $(q+1)$-regular tree \cite[Section 2.2]{MR1703086}, that is, they are subgroups
of $\Nr_q$. If $q$ is even, then $\Th'_{q,r}=\Th_{q,r}$. For odd $q$,
$[\Th_{q,r}:\Th'_{q,r}] = 2$ \cite[Theorem 4.16]{MR885095},  \cite[Theorem
5.1]{1502.00991}.  The group $\Th_{2,1}$ is also known as the \textbf{Thompson
group $V$}. It is known that $\Fh_{q,r} < \Th_{q,r}$.

Given a group $\Gamma$ acting on a tree $T$, in the beginning of Section
\ref{sec:Cantor} we will define, following Neretin, the group $\G\Gamma$ of
partial actions on the boundary of $T$.  Theorem \ref{thm:Neretin} is a
corollary of a more  general theorem about uniform simplicity of partial
actions.

\begin{theorem} \label{thm:tree}
Assume that a group $\Gamma$ acts on a leafless tree $T$, whose boundary is
a Cantor set, such that $\Gamma$ does not fix any proper subtree (e.g.
$\Gamma\backslash T$ is finite) nor a point in the boundary of\/ $T$.  Then the
commutator subgroup ${\G{\Gamma}}'$ of\/ $\G{\Gamma}$ is \boundd-uniformly
simple.
\end{theorem}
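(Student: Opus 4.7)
The plan is to reduce Theorem~\ref{thm:tree} to the general uniform simplicity result for groups acting extremely proximally on a Cantor set, which is the main technical content of Section~\ref{sec:Cantor}; the bound \boundd\ is already obtained there. Thus what remains is to verify that the natural action of $\G{\Gamma}$ on the Cantor set $\partial T$ is extremely proximal: for every proper closed $K\subsetneq\partial T$ and every nonempty open $U\subseteq\partial T$, there exists $g\in\G{\Gamma}$ with $g(K)\subseteq U$.

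The shadows $\partial H$ of half-trees $H\subset T$ (obtained by cutting $T$ at an edge) form a clopen basis of $\partial T$, so by compactness $K$ is covered by finitely many pairwise disjoint shadows $\partial H_1\sqcup\cdots\sqcup\partial H_n$, while $U$ contains some $\partial H^{*}$. Since $\partial T$ is a Cantor set, $H^{*}$ contains pairwise disjoint half-sub-trees $H^{*}_1,\ldots,H^{*}_n$. Elements of $\G{\Gamma}$ are piecewise $\Gamma$-isomorphisms between finite half-tree partitions of $T$, so producing the desired $g$ reduces to a single basic move: given any two half-trees $H,H'\subset T$, find $\gamma\in\Gamma$ with $\gamma H\subsetneq H'$. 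The element $g$ is then built by applying this move to each pair $(H_i,H^{*}_i)$ and completing arbitrarily on the remaining half-trees.

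This basic move is the main obstacle, and it is where both hypotheses of the theorem come into play. The non-existence of a fixed boundary point, combined with the absence of an invariant proper subtree, forces $\Gamma$ to contain a hyperbolic element by Bass--Serre/Tits theory, and a standard ping-pong argument then yields two independent hyperbolics whose attracting and repelling ends can be chosen inside prescribed half-tree shadows. One picks a hyperbolic $h\in\Gamma$ with attracting end inside $\partial H'$; sufficiently high powers of $h$ contract the complement of the repelling end into an arbitrarily small half-sub-tree of $H'$, and precomposing with an element of $\Gamma$ that moves the repelling end off $\partial H$ (again available because no proper subtree is preserved) yields the required $\gamma$. Assembling finitely many such moves produces the desired $g\in\G{\Gamma}$, establishing extremal proximality and concluding the proof via the general theorem of Section~\ref{sec:Cantor}.
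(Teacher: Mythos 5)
Your overall route is the paper's: verify that the action on $\partial T$ is extremely proximal and invoke the general result of Section~\ref{sec:Cantor} (Theorem~\ref{thm:C}); you should at least record that $\G{\Gamma}$ acts faithfully and is its own topological full group ($\G{\G{\Gamma}}=\G{\Gamma}$), since these are the remaining hypotheses there. Your ``basic move'' --- for any two half-trees $H,H'$ find $\gamma\in\Gamma$ with $\gamma H\subsetneq H'$ --- is exactly the key claim in the paper's proof of the implication $(4)\Rightarrow(1)$ of Theorem~\ref{thm:parr}, and your dynamical derivation of it (a hyperbolic element, north--south dynamics on $\partial T$, precomposition to avoid the repelling end) is an acceptable variant of the paper's combinatorial path argument, modulo two glossed points: placing the attracting end inside $\partial H'$ is not a ping-pong statement but a minimality statement (by Lemma~\ref{lem:orbit} translate a vertex of the axis into $H'$, and note that a biinfinite geodesic meeting a half-tree has an end in its shadow), and the existence of an element moving the repelling end $\omega_-$ off $\partial H$ uses \emph{both} hypotheses: a one-point orbit of $\omega_-$ would be a fixed end, while if the whole orbit lay in $\partial H$ the union of the geodesics joining its points would be a proper $\Gamma$-invariant subtree contained in $H$.

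The genuine gap is the patching step: ``completing arbitrarily on the remaining half-trees'' is not a legitimate move. An element of $\G{\Gamma}$ is a homeomorphism of all of $\partial T$, so after sending $\partial H_1,\dots,\partial H_n$ into disjoint pieces of $\partial H^{*}$ by $\gamma_1,\dots,\gamma_n$ you must still exhibit a piecewise-$\Gamma$ homeomorphism from $\partial T\smallsetminus\bigcup_i\partial H_i$ onto $\partial T\smallsetminus\bigcup_i\gamma_i(\partial H_i)$, and nothing guarantees that one exists (if the shadows covering $K$ happened to exhaust $\partial T$ there is nothing left to complete with, and the partial map cannot even be surjective); building such a matching in general needs the swap elements $\tau_{h,U}$ and compression arguments of Section~\ref{sec:Cantor}, i.e. essentially the extremal proximality you are in the middle of proving. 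Fortunately the detour is unnecessary: a proper closed $K\subsetneq\partial T$ has nonempty open complement, hence lies in the shadow $\partial T_{-\vec e}$ of a \emph{single} half-tree, and $U$ contains a single shadow $\partial T_{\vec f}$; one application of your basic move to the pair $\left(T_{-\vec e},T_{\vec f}\right)$ gives $\gamma\in\Gamma\subset\G{\Gamma}$ with $\gamma(K)\subseteq\partial T_{\vec f}\subseteq U$. This shows that $\Gamma$ itself is extremely proximal, extremal proximality of $\G{\Gamma}$ follows at once, and no patching is needed --- which is precisely how the paper argues inside Theorem~\ref{thm:parr}. With that repair your proof is correct and yields the stated bound.
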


This is an immediate corollary of Theorems \ref{thm:C} and \ref{thm:parr}. The latter
theorem concerns with several
characterisations of extremal proximality of the group action on the boundary of a tree.

Section \ref{sec:app} is devoted to the proof that the groups of
quasi-isometries and almost-isometries of regular trees are five-uniformly
simple.

The uniform simplicity of homeomorphism groups of certain spaces has been
considered since the beginning of 1960s e.g. by Anderson \cite{MR0139684}.
He proved that the group of all homeomorphisms of ${\mathbf R}^n$ with compact
support and the group of all homeomorphisms of a Cantor set are
two-uniformly simple (and has commutator width one). His arguments uses an
infinite iteration arbitrary close to every point, which is not suitable for
the study of spheromorphisms group and the Higman-Thomson groups. 

$N$-uniform simplicity is a first order logic property (for a fixed natural
number $N$). That is, can be written as a formula in a first order logic. Therefore,
$N$-uniform simplicity is preserved under  elementary equivalence: if $G$ is $N$-uniform simple, then all other groups, elementary equivalent with $G$ are also $N$-uniformly simple. In particular, all ultraproducts of Neretin groups, and  Higman-Thompson groups mentioned above, are \boundd-uniformly
simple.

Another feature of uniformly simple groups comes from \cite[Theorem 4.3]{MR3085032}, where the second author proves the following classification fact about actions of uniformly simple groups (called boundedly simple in \cite{MR3085032}) on trees: if a uniformly simple group acts faithfully on a tree $T$ without invariant proper subtree or invariant end, then essentially $T$ is a bi-regular tree (see Section \ref{sec:Neretin} for definitions).

In Section
\ref{sec:para} we discuss \textit{the primitivity} of actions on linearly ordered sets (i.e. lack of proper convex congruences). In fact, we prove that primitivity and proximality are equivalent notions for bounded actions (Theorem \ref{thm:compa}). Our primitivity appears in the literature
as \textit{o-primitivity}, see e.g. \cite[Section 7]{MR1486196}.

Calegari \cite{MR2366352} proved that subgroups of $\textSC{PL}^+$
homeomorphism of the interval, in particular the Thompson group $F$, have
trivial stable commutator length.  Essentially by \cite[Lemma 2.4]{MR2509711}
by Burago, Polterovich, and Ivanov (that we will explain for the completeness of the
presentation) we reprove in Lemma \ref{lem:bip} the commutator width of the commutator 
subgroup (and other groups covered by Theorems \ref{thm:order} and 
\ref{thm:doubly-transitive}) of the Thompson group $F$.

Let us discuss examples and nonexamples of bonded and uniform simplicity.
It is known that a simple Chevalley group (that is, the group of points over an
arbitrary infinite field $K$ of a quasi-simple quasi-split connected reductive
group) is uniformly simple \cite{MR1710745, MR1919378}. In fact, there exists
a constant $d$ (which is conjecturally $4$, at least in the algebraically
closed case \cite{MR1327877}) such that, any such Chevalley group $G$ is $d
\cdot r(G)$-uniformly simple, where $r(G)$ is the relative rank of $G$
\cite{MR1710745}.

Full automorphism groups of right-angled buildings are simple, but never
boundedly simple, because of existence of nontrivial quasi-morphisms
\cite{MR3164745}, \cite[Theorem 1.1]{MR2585575} (except if the building is a
bi-regular tree \cite[Theorem 3.2]{MR3085032}).

Compact groups are never uniformly simple. More generally, a topological group
$\Gamma$ is called a \textit{\textsc{sin} group} if every neighborhood of the identity
$e\in\Gamma$ contains a neighborhood of $e$ which is invariant under all inner
automorphisms. Every compact group is \textsc{sin} (as if $V$ is such a neighborhood,
then $\bigcap_{\gamma\in\Gamma}\gamma^{-1}V\gamma$ has nonempty interior).
Clearly every infinite Hausdorff \textsc{sin}-group is not uniformly simple.  Moreover,
many compact linear groups (e.g. $SO(3,\B R)$) are boundedly simple, because of
the presence of the dimension with good properties.  (See also the discussion at
the end of this section.)

Let us conclude the introduction by relating simplicity and the notion of central norms
on groups.
Let $\Gamma$ be a group. A function $\|\cdot\|\colon\Gamma\to\B R_{\geq 0}$ is called a \textbf{seminorm} if
\begin{itemize}
\item $\|g\|=\|g^{-1}\|$ for all $g\in\Gamma$, and
\item $\|gh\|\leq\|g\|+\|h\|$ for all $g,h\in\Gamma$.
\end{itemize}
A seminorm is called
\begin{itemize}
\item \textbf{trivial} if $\|g\|=0$ for all $g\in\Gamma$,
\item \textbf{central} if $\|gh\|=\|hg\|$,
\item a \textbf{norm} if $\|g\|>0$ for $1\neq g\in\Gamma$,
\item \textbf{discrete} if $\inf_{1\neq g\in\Gamma}\|g\|>0$, and
\item \textbf{bounded} if $\sup_{g\in\Gamma}\|g\|<\infty$.
\end{itemize}
A seminorm is discrete if and only if the 
topology it induces is discrete. A discrete seminorm is a norm. Every central seminorm $\|\cdot\|$ is conjugacy invariant: $\|ghg^{-1}\|=\|h\|$.

A typical example of a nontrivial central and discrete norm is a \textbf{word norm $\|\cdot\|_S$} attached to a subset $S\subseteq \Gamma$ (cf. \cite[2.1]{MR2819193}): 
$$
\| f \|_S = \min\left\{ k\in \B N : f = g_1\cdot\ldots\cdot g_k,\text{ each } g_i \text{ is conjugate with an element from } S\cup S^{-1}\right\}.
$$
For a nontrivial central norm $\|\cdot\|$ we define
an invariant $\Delta(\|\cdot\|)=\frac{\sup_{g\in\Gamma}\|g\|}{\inf_{1\neq g\in\Gamma}\|g\|}$.
Of course, if $\|\cdot\|$ is either nondiscrete or unbounded then $\Delta(\|\cdot\|)=\infty$.
We define $\Delta(\Gamma)$ to be the supremum of $\Delta(\|\cdot\|)$ for all nontrivial
central norms on~$\Gamma$.

\begin{proposition}\label{prop:norm}
Let $\Gamma$ be a group.  Then
\begin{enumerate}
\item $\Gamma$ is simple if and only if any nontrivial central seminorm on $\Gamma$ is a norm;
\item $\Gamma$ is boundedly simple if and only if every central seminorm on $\Gamma$ is a bounded norm;
\item If\/ $\Gamma$ is uniformly simple, then every central seminorm on $\Gamma$ is a bounded and discrete norm;
\item $\Gamma$ is $N$-uniformly simple if and only if $\Delta(\Gamma)\leq N$.
\end{enumerate}
\end{proposition}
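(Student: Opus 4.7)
The four items will be proved in order, since each builds on the previous. The central observation used throughout is that for any central seminorm $\|\cdot\|$ on $\Gamma$, the set $\ker\|\cdot\|=\{g\in\Gamma:\|g\|=0\}$ is a normal subgroup (a subgroup by the triangle inequality and symmetry, normal because centrality forces conjugation invariance); conversely, for any proper normal subgroup $H\lhd\Gamma$, the function $\|g\|_H$ taking the value $0$ on $H$ and $1$ on $\Gamma\setminus H$ is a central seminorm with kernel $H$. These two constructions are mutually inverse and immediately establish (1).

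For (2), assume $\Gamma$ is boundedly simple. By (1) every nontrivial central seminorm is a norm; boundedness follows by picking any $g_0\neq 1$ with $\|g_0\|>0$, letting $C$ be its conjugacy class, and writing any $h\in\Gamma$ as a product of at most $N(C)$ elements of $C^{\pm 1}$, which yields $\|h\|\leq N(C)\|g_0\|$ by conjugacy invariance. Conversely, if every central seminorm is a bounded norm, then by (1) $\Gamma$ is simple, so the word norm $\|\cdot\|_C$ attached to any nontrivial conjugacy class $C$ is a central norm on all of $\Gamma$, and its supremum, finite by hypothesis, is the required bound. Part (3) uses the same template under $N$-uniform simplicity: a nontrivial central seminorm is automatically a bounded norm by (1) and (2), and discreteness follows by fixing $g_0\neq 1$ and writing it as at most $N$ conjugates of $f^{\pm 1}$ for any nontrivial $f$, yielding $\|g_0\|\leq N\|f\|$, hence $\inf_{f\neq 1}\|f\|\geq\|g_0\|/N>0$.

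The forward direction of (4) is this same symmetric inequality: for any nontrivial central norm and any nontrivial $f,g$, $\|g\|\leq N\|f\|$, so $\Delta(\|\cdot\|)\leq N$ and hence $\Delta(\Gamma)\leq N$. For the converse, since the word norm $\|\cdot\|_C$ on a simple $\Gamma$ has $\inf=1$, the bound $\Delta(\|\cdot\|_C)\leq N$ forces $\sup\|\cdot\|_C\leq N$, which is $N$-uniform simplicity. The delicate step, and the main obstacle, is that $\|\cdot\|_C$ is a (finite) norm on all of $\Gamma$ only when $\Gamma$ is simple; one must therefore first extract simplicity from $\Delta(\Gamma)\leq N$. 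This is done by a perturbed indicator: if $H\lhd\Gamma$ were a proper nontrivial normal subgroup, then for any $\epsilon>0$ the function $\|\cdot\|_{H,\epsilon}$ taking the value $0$ on $\{1\}$, $\epsilon$ on $H\setminus\{1\}$, and $1+\epsilon$ on $\Gamma\setminus H$ is a central norm (symmetry and triangle inequality are straightforward case checks; conjugacy invariance is automatic from normality of $H$) with $\Delta=(1+\epsilon)/\epsilon$, making $\Delta(\Gamma)=\infty$ by letting $\epsilon\to 0$ and contradicting $\Delta(\Gamma)\leq N$; thus $\Gamma$ is simple and the argument concludes.
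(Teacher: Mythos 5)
Your proof is correct and follows essentially the same route as the paper: kernels of central seminorms versus indicator seminorms of proper normal subgroups for (1), the triangle inequality plus conjugacy invariance for boundedness and discreteness, and word norms attached to conjugacy classes for the converse directions of (2) and (4). The only place you go beyond the paper is the perturbed indicator norm $\|\cdot\|_{H,\epsilon}$ used to extract simplicity from $\Delta(\Gamma)\leq N$ before invoking the word norm $\|\cdot\|_g$; the paper leaves the finiteness of that word norm implicit, so your extra step is a correct patch of a tacit point rather than a different method.
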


\begin{proof}
$(1)$ It is obvious that the kernel of a central seminorm is closed under multiplication and conjugacy invariant.  Thus, it is a normal subgroup. On the other hand, if\/ $\Gamma_0\lhdneq\Gamma$ is a proper normal subgroup of\/ $\Gamma$ then
$$
\|g\|=
\begin{cases}
0&\text{if $g\in\Gamma_0$,}\\
1&\text{if $g\not\in\Gamma_0$}\\
\end{cases}
$$
is a nontrivial central seminorm.  It is a norm only if\/ $\Gamma_0=\{1\}$.

$(2)$ Suppose that $\|\cdot\|$ is a central seminorm and assume that $\Gamma$
is boundedly simple. Choose $1\neq g\in\Gamma$.  There exists $N=N(g)$ such
that every element $f$ is a product of at most $N$ conjugates of $g$ and
$g^{-1}$.  Thus, by the triangle inequality and conjugacy invariance,
$\|f\|\leq N\|g\|$.  The number $N\|g\|$ is finite and independent on $f$. For
the converse take $1\neq g\in\Gamma$ and consider the word norm $\|\cdot\|_S$ attached to $S=\presp{\Gamma}g\cup \presp{\Gamma}{g^{-1}}=
\left\{x\in\Gamma : x\text{ is conjugated to }g\text{ or }g^{-1}\right\}$. It is obvious that $\|\cdot\|_S$ is a central seminorm on $\Gamma$. Thus
$\Gamma$ is boundedly simple, as $\|\cdot\|_S$ is bounded.

$(3\ \&\ 4)$ Suppose $\Gamma$ is $N$-uniformly simple, i.e. $N$ is independent
on $g\in\Gamma$ and take nontrivial central norm $\|\cdot\|$. By the triangle
inequality we conclude that $\Delta(\|\cdot\|)\leq N$, which proves the
necessity of the condition in $(4)$. For the converse, take $1\neq g\in\Gamma$,
and consider the word norm $\|\cdot\|_{g}$ above. We have that $\Gamma$ is
$\Delta(\|\cdot\|_g)\leq\Delta(\Gamma)$-uniformly simple.  This completes the
proof of (4), which implies (3).
\end{proof}

In particular, the infinite alternating group $A_\infty=\bigcup_{n\geq5}A_n$ is simple
but not boundedly simple.  To see the later, observe that the cardinality of the
support is an unbounded central norm.  This norm is maximal up to scaling.
Indeed, essentially by \cite[Lemma 2.5]{MR2178069} every element $\sigma\in A_\infty$
is a product of at most $\left\lfloor\frac{8\#\supp(\sigma)}{\#\supp(\pi)}\right\rfloor+2\leq\frac{10\#\supp(\sigma)}{\#\supp(\pi)}$
conjugates of $\pi$.

Also, it is easy to see that $\mathrm{SO}(3)$ is boundedly simple, but not uniformly simple.
The angle of rotation is a full invariant of an element of that group, and this function
is a central norm.  Clearly it is not discrete.  As before, one can observe that if
$R$ is a rotation by an angle $\theta$, then every other rotation can be obtained
by at most $\left\lceil\frac{\pi}{\theta}\right\rceil$ conjugates of $R$.

Every universal sofic group \cite[Section 2]{MR2178069} is boundedly simple,
but not uniformly simple. Namely, let $(S_n,\|\cdot\|_H)_{n\in\B N}$ be the
full symmetric group on $n$ letters with the normalized Hamming norm:
$\|\sigma\|_H=\frac{1}{n}|\supp(\sigma)|$, for $\sigma\in S_n$.  Take
\[({\mathcal G},\|\cdot\| ) = {\prod\nolimits_{\mathcal U}}^{met}(S_n,\|\cdot\|_H)\] the
metric ultraproduct of $(S_n,\|\cdot\|_H)_{n\in\B N}$ over a nonprincipal
ultrafilter $\mathcal U$. Then, the proof of
\cite[Proposition 2.3(5)]{MR2178069} shows that $\mathcal G$ is boundedly
simple. Furthermore, $\mathcal G$ is not uniformly simple, as $\|\cdot\|$ is a
non-discrete central norm on $\mathcal G$ (see Proposition
\ref{prop:norm}$(4)$).  As before, this norm is maximal up to scaling
due to \cite[Lemma 2.5]{MR2178069}.

\section{Burago--Ivanov--Polterovich method}\label{sec:burago}

The symbol $\Gamma$ will always denote a group. For $a,b \in \Gamma$ we use the following notation:
$\presp{g}h:=ghg^{-1}$ and $[g,h]:=\presp gh.h^{-1}=g.\presp hg^{-1}$.
By $\presp{\Gamma}g$ we mean the conjugacy class of $g\in\Gamma$.

Let $C$ be a nontrivial conjugacy class in $\Gamma$.  By
\textbf{$C$-commutator} we mean an element of $[\Gamma,C]=\{[g,h] : g\in\Gamma,h\in C\}$.
If $h\in C$ we will use the name $h$-commutator as a synonym of $C$-commutator, for short.
Of course $[\Gamma,C]=C.C^{-1}$, thus the set of $C$-commutators is closed
under inverses and conjugation.

The \textbf{commutator length} of an element $g \in [\Gamma,\Gamma]$ is the
minimal number of commutators sufficient to express $g$ as their product.  The
\textbf{commutator width} of\/ $\Gamma$ is the maximum of the
commutator lengths of elements of its derived subgroup $[\Gamma,\Gamma]=\Gamma'$.

We say that $f$ and $g\in\Gamma$ \textbf{commute up to conjugation} if
there exist $h\in\Gamma$ such that $f$ and $\presp{h}g$ commute.



\begin{lemma}\label{lem:commutators}
Assume that $\alpha$ and $\presp h\beta$ commute.
Then $[\alpha,\beta]$ is a product of two $h$-commutators.
More precisely $[\alpha,\beta]$ can be written as a product
of two conjugates of $h$ and two conjugates of $h^{-1}$
by elements from the group generated by $\alpha$ and $\beta$.
\end{lemma}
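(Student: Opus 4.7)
The plan is to exploit the hypothesis $\alpha\cdot\presp{h}\beta = \presp{h}\beta\cdot\alpha$ by substituting $\beta = h^{-1}(h\beta h^{-1})h$ into $[\alpha,\beta]=\alpha\beta\alpha^{-1}\beta^{-1}$, sliding $\alpha$ past the commuting factor $h\beta h^{-1}$, and then regrouping the resulting string into two $h$-commutators.

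Concretely, set $\gamma=\presp{h}\beta=h\beta h^{-1}$. First, expand
\[\alpha\beta\alpha^{-1} \;=\; \alpha h^{-1}\gamma h\alpha^{-1}\;=\;(\alpha h^{-1}\alpha^{-1})\,(\alpha\gamma\alpha^{-1})\,(\alpha h\alpha^{-1})\;=\;(\alpha h^{-1}\alpha^{-1})\,\gamma\,(\alpha h\alpha^{-1}),\]
where the last equality uses $\alpha\gamma=\gamma\alpha$. Multiplying by $\beta^{-1}$ on the right and substituting $\gamma = h\beta h^{-1}$ gives
\[[\alpha,\beta]\;=\;(\alpha h^{-1}\alpha^{-1})\cdot h\cdot\beta\cdot h^{-1}\cdot(\alpha h\alpha^{-1})\cdot\beta^{-1}.\]
Insert $\beta^{-1}\beta$ between $\beta$ and $h^{-1}$ to conjugate the middle block:
\[[\alpha,\beta]\;=\;\bigl(\alpha h^{-1}\alpha^{-1}\bigr)\cdot h\cdot\bigl(\beta h^{-1}\beta^{-1}\bigr)\cdot\bigl(\beta\alpha h\alpha^{-1}\beta^{-1}\bigr).\]

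This is the refined form: two conjugates of $h^{-1}$ (by $\alpha$ and by $\beta$) interleaved with two conjugates of $h$ (by $1$ and by $\beta\alpha$), all conjugating elements lying in $\langle\alpha,\beta\rangle$. Regrouping the first two and last two factors yields
\[[\alpha,\beta]\;=\;[\alpha,h^{-1}]\cdot\beta\,[h^{-1},\alpha]\,\beta^{-1},\]
which exhibits $[\alpha,\beta]$ as a product of two $h$-commutators, using that the set of $h$-commutators $[\Gamma,C]=C\cdot C^{-1}$ (for $C$ the conjugacy class of $h$) is closed under inversion and conjugation, as recalled at the start of the section.

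There is no real obstacle here; the only point requiring any care is the placement of the inserted $\beta^{-1}\beta$ and verifying that each of the four resulting one-letter blocks is indeed a conjugate of $h^{\pm1}$ by an element of $\langle\alpha,\beta\rangle$, which is immediate from inspection.
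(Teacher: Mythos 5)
Your proof is correct. The key identity you display,
\[
[\alpha,\beta]=\bigl(\alpha h^{-1}\alpha^{-1}\bigr)\cdot h\cdot\bigl(\beta h^{-1}\beta^{-1}\bigr)\cdot\bigl(\beta\alpha h\alpha^{-1}\beta^{-1}\bigr),
\]
does follow from the hypothesis that $\alpha$ commutes with $h\beta h^{-1}$ (cancel the inserted $\beta^{-1}\beta$ and slide $\alpha^{-1}$ past $h\beta h^{-1}$), and it exhibits exactly the required four conjugates, with conjugators $\alpha,1,\beta,\beta\alpha\in\langle\alpha,\beta\rangle$; grouping as $[\alpha,h^{-1}]\cdot\beta[h^{-1},\alpha]\beta^{-1}$ gives the two $h$-commutators, since each factor lies in $C^{-1}C=CC^{-1}=[\Gamma,C]$ by the conjugation-invariance recalled at the start of Section~\ref{sec:burago}. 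One descriptive slip: the factor $\beta^{-1}\beta$ is inserted between $h^{-1}$ and $\alpha h\alpha^{-1}$, not ``between $\beta$ and $h^{-1}$''; the displayed formula is nonetheless the right one. Your route is the same elementary commutator calculus as the paper's, but organized around a different identity: the paper first notes $[\alpha,[\beta,h]]=[\alpha,\beta\,\presp{h}{\beta^{-1}}]=[\alpha,\beta]$ (using the same commutation) and then expands $[\alpha,[\beta,h]]=\presp{\alpha}{[\beta,h]}\cdot[\beta,h]^{-1}=[\presp{\alpha}{\beta},\presp{\alpha}{h}]\,[\beta^{-1},\presp{\beta}{h}]$, whereas you expand $[\alpha,\beta]$ directly and arrive at the equivalent identity $[\alpha,\beta]=[[\alpha,h^{-1}],\beta]$. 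The two factorizations differ in the conjugators they produce ($\alpha\beta,\alpha,1,\beta$ in the paper versus $\alpha,1,\beta,\beta\alpha$ in yours) but are of identical length and strength, so nothing is gained or lost either way.
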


\begin{proof}
We have
$
[\alpha,[\beta,h]]=\left[\presp{\alpha}\beta,\presp{\alpha}h\right]\left[\beta^{-1},\presp{\beta}h\right]
$.
Also,
$
[\alpha,[\beta,h]]=\left[\alpha,\beta\ \presp{h}\beta^{-1}\right]=[\alpha,\beta]
$,
since $\alpha^{-1}$ commute with $\presp{h}\beta^{-1}$.
\end{proof}

Following Burago, Ivanov and Polterovich \cite[Sec. 2.1]{MR2509711} assume that
$H<\Gamma$ is a subgroup, $h\in\Gamma$ and $k\in\B N\cup\{\infty\}$. We say
that an element $h$ \textbf{$k$-displaces} $H$ if
$$
\left[f,\presp{h^j}g\right]=e\text{ for all }f,g\in H,\text{ and }j=1,\ldots,k
$$
(hence also $\left[\presp{h^i}f,\presp{h^j}g\right]=e$ for $1\leq |i-j|\leq k$).

We will say that $h$ \textbf{displaces} $H$ if it 1-displaces $H$.  We say that
$H<\Gamma$ is \textbf{$k$-displaceable} in $\Gamma$ if there exists
$h\in\Gamma$ such that $h$ $k$-displaces $H$ (this property is called
\textit{strongly $k$-displaceable} in \cite[Sec. 2.1]{MR2509711}). In
particular, elements of a displaceable subgroup commute up to conjugation.

\begin{lemma}\label{lem:bip}
\cite[Lemma 2.5]{MR2509711}
Assume that $h\in\Gamma$ $k$-displaces $H<\Gamma$. Let $f\in H'$ be
a product of at most $k$ commutators ($k\geq 2$).  Then there exists
$\alpha$, $\beta$, and $\gamma\in\Gamma$ such that $f=[\alpha,\beta][\gamma,h]$.
\end{lemma}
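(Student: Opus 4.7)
The natural approach is the standard Burago--Ivanov--Polterovich trick: ``spread out'' the $k$ commutators composing $f$ by conjugating them by distinct powers of $h$, so that they land in pairwise commuting conjugates of $H$, and then correct the resulting mismatch with a single $h$-commutator. The hypothesis that $h$ $k$-displaces $H$ says precisely that the subgroups $H, \presp{h}{H}, \ldots, \presp{h^k}{H}$ pairwise commute (all nonzero level differences lie in $\{1,\ldots,k\}$), which is exactly what makes this work.

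First I would write $f = c_1 c_2 \cdots c_k$ with $c_i = [f_i, g_i]$ and $f_i, g_i \in H$ (padding with trivial commutators if fewer than $k$ suffice). Set
$$
\alpha = \prod_{i=1}^{k} \presp{h^{i-1}}{f_i}, \qquad
\beta  = \prod_{i=1}^{k} \presp{h^{i-1}}{g_i}.
$$
The $i$-th factor of $\alpha$ lies in $\presp{h^{i-1}}{H}$ and these subgroups pairwise commute, so the commutator decomposes cleanly as $[\alpha,\beta] = \prod_{i=1}^{k} \presp{h^{i-1}}{c_i} = c_1 \cdot \presp{h}{c_2} \cdots \presp{h^{k-1}}{c_k}$.

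To absorb the discrepancy between $[\alpha,\beta]$ and $f$, I would take the telescoping choice
$$
\gamma = \prod_{j=0}^{k-2} \presp{h^{j}}{(c_{j+2} c_{j+3} \cdots c_k)}
$$
and expand $[\gamma, h] = \gamma \cdot \presp{h}{(\gamma^{-1})}$. Regrouping factors by the ``level'' $\presp{h^i}{H}$ they sit in (legitimate because distinct levels commute) and telescoping within each level, one obtains
$$
[\gamma, h] = (c_2 c_3 \cdots c_k)\cdot \prod_{i=1}^{k-1} (\presp{h^i}{c_{i+1}})^{-1}.
$$
A final inter-level cancellation then yields $[\alpha,\beta]\cdot[\gamma, h] = c_1 c_2 \cdots c_k = f$, as required.

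The delicate point will be the bookkeeping inside $[\gamma, h]$: inter-level commutation lets one reorder factors across different $\presp{h^i}{H}$'s freely, but within a single level no such luxury exists, so I would need to verify that after collecting the contributions to each level the surviving product actually collapses in the correct order. The nested, shrinking-tail shape of $\gamma$ is engineered precisely to force this within-level telescoping; once that is checked, everything else is routine.
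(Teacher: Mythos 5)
Your proof is correct, and it is essentially the argument behind the cited result: the paper itself gives no proof of this lemma but imports it verbatim from Burago--Ivanov--Polterovich, and your construction is a faithful reconstruction of their displacement trick (spread the $k$ commutators over the pairwise commuting conjugates $h^{i}Hh^{-i}$, $0\le i\le k-1$, then repair the discrepancy with one $h$-commutator). The bookkeeping you flagged does go through: writing $P_j=c_{j+2}\cdots c_k$, the factors of $[\gamma,h]=\gamma\,(h\gamma^{-1}h^{-1})$ sit at levels $0,\dots,k-1$, any two distinct levels commute elementwise by the displacement hypothesis, so you may collect factors level by level while keeping their within-level order; level $0$ contributes $P_0=c_2\cdots c_k$, level $k-1$ contributes $h^{k-1}c_k^{-1}h^{-(k-1)}$, and level $i$ with $1\le i\le k-2$ contributes $h^{i}(P_iP_{i-1}^{-1})h^{-i}=h^{i}c_{i+1}^{-1}h^{-i}$, which is exactly your claimed formula; multiplying on the left by $[\alpha,\beta]=c_1\,(hc_2h^{-1})\cdots(h^{k-1}c_kh^{-(k-1)})$ and cancelling level by level yields $f$, with $\alpha,\beta,\gamma\in\Gamma$ as required. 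One small observation: since only the levels $0,\dots,k-1$ occur, the commutation you actually use involves level differences in $\{1,\dots,k-1\}$, so your argument even proves the statement under the slightly weaker hypothesis that $h$ $(k-1)$-displaces $H$.
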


Burago, Polterovich, and Ivanov \cite[Theorem 2.2(i)]{MR2509711} proved that if for every
$k\in\B N$ some conjugate of $g$ $k$-displaces $H$ then every element of $H'$ is
a product of seven $g$-commutators.  We get a better result under a stronger assumption.

\begin{proposition}\label{prop:bip}
Assume that $g\in\Gamma$ is such that for every finitely generated subgroup
$H<\Gamma$ and $k\in\B N$, there exists a conjugate of $g$ which $k$-displaces
$H$.  Then every element of\/ $\Gamma'$ is a product of two commutators in
$\Gamma$ and three $g$-commutators in $\Gamma$. Moreover,
$$
\Gamma'\subseteq\presp{\Gamma'}g\presp{\Gamma'}g^{-1}\presp{\Gamma'}g\presp{\Gamma'}g^{-1}\presp{\Gamma'}g\presp{\Gamma'}g^{-1}=\left(\presp{\Gamma'}g\presp{\Gamma'}g^{-1}\right)^3.
$$
\end{proposition}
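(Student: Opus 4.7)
The plan is to apply the Burago--Ivanov--Polterovich machinery twice. Take any $f\in\Gamma'$ and write it as a product of $k\geq 2$ commutators, $f=\prod_{i=1}^{k}[a_i,b_i]$, so that all the $a_i,b_i$ lie in a common finitely generated subgroup $H_0:=\langle a_i,b_i\rangle<\Gamma$. By hypothesis, there is a conjugate $h_0$ of $g$ that $k$-displaces $H_0$. Applying Lemma~\ref{lem:bip} will rewrite $f$ as
\[
f=[\alpha,\beta]\cdot[\gamma,h_0]
\]
for suitable $\alpha,\beta,\gamma\in\Gamma$; this part of the argument already establishes the commutator-width bound of two on $\Gamma'$.

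To convert the remaining commutator into $g$-commutators, I apply the hypothesis a second time to the finitely generated subgroup $H_1:=\langle\alpha,\beta\rangle$ with $k=1$, obtaining a conjugate $h_1$ of $g$ that displaces $H_1$; in particular $\alpha$ commutes with $\presp{h_1}\beta$. Lemma~\ref{lem:commutators} now writes
\[
[\alpha,\beta]=\presp{(\alpha\beta)}h_1\cdot\presp{\alpha}h_1^{-1}\cdot h_1\cdot\presp{\beta}h_1^{-1}
\]
as a product of two $h_1$-commutators, with conjugating factors drawn from $\langle\alpha,\beta\rangle$. Combined with $[\gamma,h_0]=\presp{\gamma}h_0\cdot h_0^{-1}$ and the fact that $h_0,h_1$ are $\Gamma$-conjugate to $g$, this displays $f$ as a product of three $g$-commutators, i.e.\ $f\in\bigl(\presp{\Gamma}g\presp{\Gamma}g^{-1}\bigr)^3$.

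The hard part will be upgrading this last containment from $\Gamma$ to $\Gamma'$, as stated in the proposition. The key algebraic identity is
\[
\presp{y}g\cdot\presp{z}g^{-1}=[y,g]\,[z,g]^{-1},
\]
which already shows that each of the three blocks lies in $\Gamma'$; the remaining task is to rewrite each block so that its two conjugating elements themselves belong to $\Gamma'$. Two sources of flexibility can be exploited: replacing the displacing conjugate $h_i$ by $\presp{s}h_i$ for any $s\in H_i$ preserves its displacement property, and thereby shifts the conjugating element of $g$ by any element of $H_i$; meanwhile, the conjugating element in $\presp{y}g$ is only determined modulo the centralizer $C_\Gamma(g)$. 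The technical heart of the argument will be to use these freedoms, together with the constraint that the full product belongs to $\Gamma'$, to arrange that all six conjugating elements land in $\Gamma'$, yielding $\Gamma'\subseteq\bigl(\presp{\Gamma'}g\presp{\Gamma'}g^{-1}\bigr)^3$.
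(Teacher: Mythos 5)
The first two paragraphs of your proposal coincide with the paper's own proof: express $f$ as $k$ commutators, let $H_0$ be the group their entries generate, use a $k$-displacing conjugate $h_0$ of $g$ together with Lemma \ref{lem:bip} to get $f=[\alpha,\beta][\gamma,h_0]$, then displace $\langle\alpha,\beta\rangle$ and apply Lemma \ref{lem:commutators}. This correctly yields both the ``two commutators'' clause and the containment $f\in\left(\presp{\Gamma}g\presp{\Gamma}g^{-1}\right)^3$, and your explicit expansion of Lemma \ref{lem:commutators} is right.

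The genuine gap is the ``moreover'' clause, which you defer as ``the technical heart'' without an argument. It is not a cosmetic refinement: the uniform simplicity statements this proposition feeds (Theorems \ref{thm:order} and \ref{thm:C}) are about conjugacy classes \emph{of $\Gamma'$}, so the conjugators must land in $\Gamma'$. Neither of the flexibilities you invoke does the job: replacing $h_i$ by $\presp{s}{h_i}$ with $s\in H_i$ only shifts the conjugators by elements of $H_i$, which are arbitrary elements of $\Gamma$, and the global constraint $f\in\Gamma'$ does not localize to the individual six factors (nor does the centralizer ambiguity help in any visible way). The missing observation, which settles everything at once, is that the displacement hypothesis already forces $\presp{\Gamma}g\subseteq\presp{\Gamma'}g$. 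Indeed, given $u\in\Gamma$, apply the hypothesis with $k=1$ to the finitely generated subgroup $\langle u,g\rangle$: there is a conjugate $w$ of $g$ with $[a,\presp{w}b]=e$ for all $a,b\in\langle u,g\rangle$; in particular $\presp{w}{u^{-1}}$ commutes with $g$, hence
$$
\presp{u}g=\presp{u}{\left(\presp{\presp{w}{u^{-1}}}g\right)}=\presp{u\,\presp{w}{u^{-1}}}g=\presp{[u,w]}g\in\presp{\Gamma'}g,
$$
and likewise every $\Gamma$-conjugate of $g^{-1}$ is a $\Gamma'$-conjugate of $g^{-1}$. Since each of the six factors in your decomposition is, as a group element, a $\Gamma$-conjugate of $g$ or $g^{-1}$ (write $\presp{x}{h_i}=\presp{xt_i}g$ for $h_i=\presp{t_i}g$), this immediately gives $\Gamma'\subseteq\left(\presp{\Gamma'}g\presp{\Gamma'}g^{-1}\right)^3$, which is exactly how the paper concludes.
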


\begin{proof}
Every element $f\in\Gamma'$ can be expressed as a product of $k$ commutators of
$2k$ elements of\/ $\Gamma$, for some $k\in\B N$.  Call the group they generate
$H$.  Since some conjugate of $g$, say $h$, $k$-displaces $H$, by Lemma
\ref{lem:bip}, there exist $\alpha$ $\beta$, and $\gamma\in\Gamma$ such that
$f=[\alpha,\beta][\gamma,h]$.

Since some conjugate of $g$ displaces the group generated by $\alpha$ and
$\beta$, by Lemma \ref{lem:commutators}, $[\alpha,\beta]$ is a product of two
$g$-commutators.  Thus $f$ is a product of three $g$-commutators.

The moreover part follows from the fact that $[\gamma,h]\in \presp{\Gamma}g\presp{\Gamma}g^{-1}$
and that  $\presp{\Gamma}h$ and $\presp{\Gamma'}h$ are equal.
The last claim follows from the fact that if 
$f\in\Gamma$ commutes up to comutation with $\presp gf^{-1}$ commute.  Then
$$
\presp{f}h
=\presp{f}{\left(\presp{\presp{g}f^{-1}}h\right)}=
\presp{[f,g]}h.
$$
\end{proof}

Note that the assumption of the above corollary implies that neither $\Gamma$
nor $\Gamma'$ are finitely generated.  However, we will use this approach to
prove uniform simplicity of the Higman-Thompson groups which are known to be
finitely generated.

\begin{lemma}\label{lem:perfect}
Assume that every two elements in $\Gamma$ commute up to conjugation.
Then every commutator in $\Gamma$ can be expressed as a commutator in $\Gamma'$.
In particular, $\Gamma'=\Gamma''$ is perfect.
\end{lemma}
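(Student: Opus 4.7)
The plan is to iterate the identity hidden inside the proof of Lemma~\ref{lem:commutators} twice, promoting each side of a given commutator into $\Gamma'$ in turn. Given $\alpha,\beta\in\Gamma$, I would first use the hypothesis to choose some $h\in\Gamma$ for which $\alpha$ commutes with $\presp{h}{\beta}$; under this assumption, the computation inside the proof of Lemma~\ref{lem:commutators} yields in particular the identity $[\alpha,[\beta,h]]=[\alpha,\beta]$. Setting $\gamma:=[\beta,h]\in\Gamma'$, this rewrites $[\alpha,\beta]$ as $[\alpha,\gamma]$, a commutator in which the right-hand entry already lies in $\Gamma'$ but the left-hand entry is still only known to lie in $\Gamma$.

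The second step is to apply the same trick again, with the roles of the two entries reversed, so as to push the left-hand entry into $\Gamma'$ as well. Using the hypothesis once more, I would pick $h'\in\Gamma$ with $\gamma$ commuting with $\presp{h'}{\alpha}$; the same identity, now with $\gamma$ and $\alpha$ playing the roles of $\alpha$ and $\beta$, gives $[\gamma,\alpha]=[\gamma,[\alpha,h']]$. Inverting and using $[x,y]^{-1}=[y,x]$ then gives
\[
[\alpha,\beta] \;=\; [\alpha,\gamma] \;=\; \bigl[[\alpha,h'],\,\gamma\bigr] \;=\; \bigl[[\alpha,h'],\,[\beta,h]\bigr],
\]
which exhibits $[\alpha,\beta]$ as a single commutator of two elements of $\Gamma'$, as required.

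For the in-particular clause, every element of $\Gamma'$ is a product of commutators in $\Gamma$, and by the above each such factor already lies in $[\Gamma',\Gamma']=\Gamma''$; hence $\Gamma'\subseteq\Gamma''$, with the reverse inclusion automatic. The only point needing care is the structural observation that a single application of Lemma~\ref{lem:commutators} promotes only one side of the commutator into $\Gamma'$, so the trick must be iterated with the roles of the two entries exchanged; beyond that, the argument is a direct appeal to the identity already extracted in the proof of Lemma~\ref{lem:commutators}, applied twice.
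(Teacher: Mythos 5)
Your proof is correct and takes essentially the same route as the paper: the paper likewise chooses $h$ with $\alpha$ commuting with $\presp{h}\beta$ and then $g$ (your $h'$) with $\presp{g}\alpha$ commuting with $[\beta,h]$, arriving at the identical formula $[\alpha,\beta]=[[\alpha,g],[\beta,h]]$. The only difference is presentational --- you spell out the two applications of the identity from Lemma~\ref{lem:commutators} as separate steps, while the paper verifies the chain of equalities in one line.
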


\begin{proof}
Let $\alpha$ and $\beta$ belong to $\Gamma$.  Choose $h$ and $g$ such that
$\alpha$ and $\presp h\beta$ commute and also $\presp g\alpha$ and $[\beta,h]$ commute.
Then,
$
[[\alpha,g],[\beta,h]]=[\alpha,[\beta,h]]=[\alpha,\beta].
$
\end{proof}

\begin{proposition}\label{prop:bip2}
Let $g\in\Gamma'$ displaces $\Gamma_0<\Gamma$.  Assume that, for every $k\in\B N$,
every finitely generated subgroup $H<\Gamma_0$ is $k$-displaceable in
$\Gamma_0$.  Then, every element of\/ $\Gamma_0'$ is a product of four
$g$-commutators from $\Gamma'_0$. In particular $\Gamma_0'\subseteq
\left(\presp{\Gamma'_0}g\presp{\Gamma'_0}g^{-1}\right)^4$.
\end{proposition}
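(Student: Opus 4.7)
The strategy is to mirror the proof of Proposition~\ref{prop:bip}, replacing its single hypothesis (``for every finitely generated $H$, some conjugate of $g$ $k$-displaces $H$'') by the two we have here: the fixed element $g\in\Gamma'$ displaces $\Gamma_0$ from the outside, and finitely generated subgroups of $\Gamma_0$ are $k$-displaceable inside $\Gamma_0$.

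Starting from $f\in\Gamma_0'$, write $f=\prod_{i=1}^{k}[\alpha_i,\beta_i]$ with $\alpha_i,\beta_i\in\Gamma_0$, and let $H\leq\Gamma_0$ be the finitely generated subgroup they generate. By the internal $k$-displaceability hypothesis, pick $h\in\Gamma_0$ that $k$-displaces $H$; then Lemma~\ref{lem:bip} produces a decomposition $f=[\alpha,\beta]\,[\gamma,h]$ with $\alpha,\beta,\gamma\in\Gamma_0$. Since $g$ displaces $\Gamma_0$, the elements $\alpha$ and $\presp{g}{\beta}$ commute, as do $\gamma$ and $\presp{g}{h}$. Applying Lemma~\ref{lem:commutators} (with $h$ there in the role of our $g$) to each of $[\alpha,\beta]$ and $[\gamma,h]$ rewrites each as a product of two $g$-commutators, for a total of four.

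It remains to secure the ``in particular'' clause: the conjugators supplied by Lemma~\ref{lem:commutators} lie a priori in $\Gamma_0$, and must be relocated into $\Gamma_0'$. The plan is to imitate the last paragraph of the proof of Proposition~\ref{prop:bip}: replace each conjugator $p\in\Gamma_0$ by a commutator $[p,q]\in\Gamma_0'$ via the identity $\presp{p}{g^{\pm 1}}=\presp{[p,q]}{g^{\pm 1}}$, which holds as soon as $\presp{q}{p^{-1}}$ commutes with $g^{\pm 1}$. The element $q\in\Gamma_0$ is to come from a further invocation of the internal $k$-displaceability, applied to a finitely generated subgroup of $\Gamma_0$ containing $p$ and enlarged to force the required commutation.

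The main obstacle is this last step. The internal displaceability supplies commutation relations only among elements of $\Gamma_0$, whereas the relocation identity demands that a specific element of $\Gamma_0$ commute with $g$ itself. Bridging the gap will require combining the external displacement (which puts $\presp{g}{\Gamma_0}$ in the centraliser of $\Gamma_0$) with the internal displaceability in a delicate way in order to manufacture, for each conjugator $p$, a suitable $q\in\Gamma_0$ with $[\presp{q}{p^{-1}},g]=e$.
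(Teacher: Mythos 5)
Your first half is exactly the paper's opening move (Lemma~\ref{lem:bip} inside $\Gamma_0$ to get $f=[\alpha,\beta][\gamma,h]$ with $\alpha,\beta,\gamma,h\in\Gamma_0$, then Lemma~\ref{lem:commutators} using that $g$ displaces $\Gamma_0$), but the proof is not complete: as you yourself note, this only yields conjugators in $\Gamma_0$, i.e.\ $f\in\left(\presp{\Gamma_0}g\,\presp{\Gamma_0}g^{-1}\right)^4$, which is weaker than the statement, and the relocation scheme you sketch for fixing it does not close. The identity $\presp{p}{g^{\pm1}}=\presp{[p,q]}{g^{\pm1}}$ needs an element $q$ with $[\presp{q}{p^{-1}},g]=e$, i.e.\ a commutation with $g$ itself; neither hypothesis provides this. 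Internal $k$-displaceability only produces commutations among elements of $\Gamma_0$, and the external hypothesis only gives $[\Gamma_0,\presp{g}{\Gamma_0}]=e$, which is a different relation (in Proposition~\ref{prop:bip} the analogous trick works because there one may choose, for each finitely generated subgroup, a \emph{new} conjugate of $g$ displacing it, whereas here $g$ is a fixed element and all further displacements live inside $\Gamma_0$). So the last step is a genuine gap, not a routine verification.

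The paper closes it by reordering the argument rather than repairing conjugators afterwards. The internal displaceability hypothesis implies that any two elements of $\Gamma_0$ commute up to conjugation in $\Gamma_0$, so Lemma~\ref{lem:perfect} applies to $\Gamma_0$: each of the two commutators $[\alpha,\beta]$ and $[\gamma,h]$ can first be rewritten as a commutator of elements of $\Gamma_0'$. Only then is Lemma~\ref{lem:commutators} invoked, and its ``more precisely'' clause does the rest: the conjugates of $g^{\pm1}$ it produces are conjugates by elements of the subgroup generated by the two entries of the commutator, which now lies in $\Gamma_0'$. This gives four $g$-commutators with conjugators in $\Gamma_0'$, hence $\Gamma_0'\subseteq\left(\presp{\Gamma_0'}g\,\presp{\Gamma_0'}g^{-1}\right)^4$, with no need to manufacture any element commuting with $g$. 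If you insert this use of Lemma~\ref{lem:perfect} before applying Lemma~\ref{lem:commutators}, your argument becomes the paper's proof.
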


\begin{proof}
By Lemma \ref{lem:bip} every element of\/ $\Gamma_0'$ is a product of two commutators of\/ $\Gamma_0$.
By Lemma \ref{lem:perfect} they can be chosen to be commutators of elements of\/ $\Gamma_0'$.
By Lemma \ref{lem:commutators} each of them is a product of two $g$-commutators over $\Gamma'_0$.
\end{proof}

\section{Bounded actions on ordered sets}\label{sec:order}

The purpose of this section is to prove that numerous simple Higman-Thompson groups acting as order preserving piecewise-linear transformations are, in fact, uniformly simple.

We always assume that a group $\Gamma$ acts faithfully on the left
by order preserving transformations on a linearly ordered set $(I,\leq)$.
Given a map $g\colon I\to I$, we define the support $\supp(g)$ of $g$ to be
$\{x\in I : g(x)\neq x\}$.  Given $a$ and $b\in I$ we define
$(a,b) = \{y\in I : a<y<b\}$.  By $(a,\infty)$
we will denote the set $\{x\in I : a<x\}$. The group of all bounded
automorphisms of $(I,\leq)$ is denoted by $B(I,\leq)$.

We call such an action
\begin{itemize}
\item \textbf{proximal}, if for every $a,b,c,d\in I$, such that $a<b$ and
$c<d$ there is $g\in \Gamma$ satisfying $g(a,b)\supseteq (c,d)$;
\item \textbf{bounded}, if for every $g\in \Gamma$ there are $a,b\in I$,
such that $\supp(g)\subseteq (a,b)$.
\end{itemize}

Note, that being proximal implies that $(I,\leq)$ is dense without endpoints.

\begin{theorem} \label{thm:order}
Assume that $\Gamma$ acts faithfully, order preserving, boundedly, and proximally on
a linearly ordered set $(I,\leq)$.  Then its commutator group $\Gamma'$ is
\bound-uniformly simple and the commutator width of $\Gamma'$ is at most two.
\end{theorem}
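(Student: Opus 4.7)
The plan is to apply the machinery of Section~\ref{sec:burago}, most importantly Proposition~\ref{prop:bip}, once two structural features of $\Gamma$ have been extracted from proximality and boundedness.

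First I would note the dual formulation of proximality: for any $a<b$ and $c<d$ in $I$, applying the axiom and passing to the inverse produces $\alpha\in\Gamma$ with $\alpha((c,d))\subseteq(a,b)$, so proximal groups not only expand but also shrink intervals. Using this together with boundedness, any two elements $f_1,f_2\in\Gamma$ with $\supp(f_i)\subseteq(a_i,b_i)$ can be made to commute up to conjugation: shrink $(a_2,b_2)$ into an interval lying strictly to the right of $b_1$ via some $\alpha\in\Gamma$; then $\alpha f_2\alpha^{-1}$ has support disjoint from $\supp(f_1)$ and thus commutes with $f_1$. This is the hypothesis of Lemma~\ref{lem:perfect}, so $\Gamma'=\Gamma''$ and every commutator in $\Gamma$ can be rewritten as a commutator of elements of $\Gamma'$.

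Second, I would show that any nontrivial $g\in\Gamma$ has $\Gamma$-conjugates that $\infty$-displace any prescribed finitely generated subgroup $H<\Gamma$. Pick $x_0\in I$ with $g(x_0)>x_0$ (replace $g$ by $g^{-1}$ otherwise) and set $y_0=g(x_0)$. Let $(p,q)$ be a bounded interval containing $\supp(H)$. Shrinking proximality produces $\alpha\in\Gamma$ with $\alpha((p,q))$ strictly inside $(x_0,y_0)$. A short order-preserving computation then gives $g':=\alpha^{-1}g\alpha$ with $g'(p)>q$, from which an induction shows that the intervals $(g')^i((p,q))$ for $i\geq 0$ are pairwise disjoint. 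Hence the supports of $(g')^iH(g')^{-i}$ and $(g')^jH(g')^{-j}$ are disjoint whenever $i\neq j$, so $g'$ $\infty$-displaces $H$, and in particular $k$-displaces $H$ for every $k\in\B N$.

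These two ingredients put us in the setting of Proposition~\ref{prop:bip}. Applied to any nontrivial $g\in\Gamma'$, that proposition yields
$$
\Gamma'\subseteq\bigl(\presp{\Gamma'}g\,\presp{\Gamma'}g^{-1}\bigr)^3,
$$
so every $f\in\Gamma'$ is a product of at most six conjugates of $g^{\pm1}$ by elements of $\Gamma'$. Since $g$ can be chosen inside an arbitrary nontrivial conjugacy class of $\Gamma'$, this is the asserted \bound-uniform simplicity. For the commutator width, the proof of Proposition~\ref{prop:bip} furnishes the decomposition $f=[\alpha,\beta][\gamma,h]$, and Lemma~\ref{lem:perfect} allows both factors to be rewritten as commutators of elements of $\Gamma'$, giving commutator width at most two.

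The main obstacle is turning the ``expanding'' proximality axiom into a form that both disjointifies supports and pushes them into the window $(x_0,g(x_0))$ where $g$ acts by visible translation; the dual (shrinking) formulation of proximality resolves this in one line, and the rest of the argument is a direct application of the results of Section~\ref{sec:burago}.
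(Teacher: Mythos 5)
Your proposal is correct and takes essentially the same route as the paper: you verify the hypothesis of Proposition~\ref{prop:bip} by using boundedness of supports plus the (dual, shrinking) form of proximality to conjugate the interval containing $\supp(H)$ into the displaced window $(x_0,g(x_0))$, and then read off the six-conjugate bound and the width-two claim from that proposition. The only difference is cosmetic: you make explicit the commute-up-to-conjugation observation and Lemma~\ref{lem:perfect} to place the two commutators inside $\Gamma'$, a point the paper leaves implicit.
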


\begin{proof}
We apply Proposition \ref{prop:bip}. Let $g$ be an arbitrary nontrivial element of
$\Gamma'$. Let $a\in I$ be such that $g(a)\neq a$.  Replacing $g$ by $g^{-1}$
we may assume that $a<g(a)$. Choose $b\in I$ such that $a<b<g(a)$.  Then
$g(a,b)\cap(a,b)=\varnothing$. Let $H$ be an arbitrary finitely generated
subgroup of\/ $\Gamma$.  Then, there exists an interval, say $(c,d)$, containing
supports of all generators of $H$, hence also containing supports of all
elements of $H$. By the proximality of the action, we may assume (possibly
conjugating $g$), that $(c,d)\subseteq (a,b)$. It is clear that such conjugate
of $g$ $\infty$-displaces $H$.  Thus Proposition \ref{prop:bip} applies.
\end{proof}

Let us apply Theorem \ref{thm:order} to the Higman-Thompson groups of order preserving piecewise-linear maps. We first recall the definitions. Let $q>r\geq 1$ be integers.
Recall that $\Fh_{q,r}$ ($\Fh_{q}$ respectively) is defined
as piecewise affine (we allow only finitely many pieces), order preserving
bijections of $\left((0,r)\cap\Zq,\leq\right)$ ($\left(\Zq,\leq\right)$ respectively)
whose breaking points of the derivatives belong to $\Zq$ and the slopes are $q^k$,
for $k\in\B Z$ (see the bottom of page 53 and the top of page 56 in \cite{MR885095}).

Define $\mathrm{B}\Fh_{q,r}$ ($\mathrm{B}\Fh_{q}$ respectively) to be the subgroup of $\Fh_{q,r}$
($\Fh_{q}$ respectively) consisting of all such transformations $\gamma$ that are boundedly
supported, that is, $\supp(\gamma)\subseteq (x,y)$, for some $x,y\in (0,r)\cap\Zq$
($x,y\in \Zq$ respectively).

We use the following lemma below. The first part of it is a known result \cite{MR3560537}.

\begin{lemma}\label{lem:ht}\ 
\begin{enumerate}
\item The groups $\mathrm{B}\Fh_{q,r}$ and $\mathrm{B}\Fh_q$ are isomorphic (\cite[Proposition \textSC{C}10.1]{MR3560537}).
\item The commutator subgroups of $\Fh_{q,r}$ and $\mathrm{B}\Fh_{q,r}$ are equal.
\end{enumerate}
\end{lemma}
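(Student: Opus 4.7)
Part (1) is a direct appeal to \cite[Proposition \textSC{C}10.1]{MR3560537}: an explicit piecewise-affine order-isomorphism $\phi\colon\Zq\to(0,r)\cap\Zq$ with $\Zq$-breakpoints and slopes in $q^{\B Z}$ conjugates $\mathrm{B}\Fh_q$ onto $\mathrm{B}\Fh_{q,r}$.

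For part (2), set $G=\Fh_{q,r}$, $N=\mathrm{B}\Fh_{q,r}$, $D=[G,G]$. The inclusion $[N,N]\subseteq D$ is immediate from $N\leq G$. For $D\subseteq[N,N]$, I would first establish the weaker statement $D\subseteq N$: every $f\in G$ is affine in a right-neighbourhood of $0$ with slope $q^{a_0(f)}$ and in a left-neighbourhood of $r$ with slope $q^{a_r(f)}$ for uniquely determined integers $a_0(f),a_r(f)$. Since composition of germs multiplies slopes, $(a_0,a_r)\colon G\to\B Z\oplus\B Z$ is a homomorphism with kernel exactly $N$ (trivial germ at each endpoint being equivalent to bounded support), and so $D$ lies in this kernel.

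To close the argument, I would invoke Higman's simplicity theorem \cite[Theorem 4.16]{MR885095}: $D$ is a simple group. The subgroup $N$ is normal in $G$ (boundedness of support is clearly preserved under conjugation), so its characteristic subgroup $[N,N]$ is normal in $G$, and in particular normal in $D$. Simplicity of $D$ forces $[N,N]\in\{1,D\}$; and $[N,N]\neq 1$ because $N$ is visibly non-abelian (two boundedly supported elements with overlapping non-trivial supports can be chosen to fail to commute). Therefore $[N,N]=D$, which together with Step~1 gives $[\Fh_{q,r},\Fh_{q,r}]=[\mathrm{B}\Fh_{q,r},\mathrm{B}\Fh_{q,r}]$.

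The main obstacle is the appeal to the simplicity of $D$. Without an external citation one would have to prove it by hand, which essentially amounts to running the present paper's programme (Theorem~\ref{thm:order} applied to $N$) and separately verifying that every boundedly supported element is itself a product of commutators in $N$; Anderson's classical infinite-iteration trick is unavailable in the piecewise-affine setting, since an infinite product of $\Fh_{q,r}$-elements typically has infinitely many breakpoints and so leaves the group, forcing one to work entirely with the finitary displaceability machinery of Section~\ref{sec:burago}.
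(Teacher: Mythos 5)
Your first step---the germ homomorphism $(a_0,a_r)\colon\Fh_{q,r}\to\B Z\oplus\B Z$ with kernel $\mathrm{B}\Fh_{q,r}$, giving $\Fh_{q,r}'\subseteq\mathrm{B}\Fh_{q,r}$---is exactly the paper's first observation and is correct. The gap is in how you close the loop. You reduce everything to the simplicity of $D=\Fh_{q,r}'$, but the reference you give, \cite[Theorem 4.16]{MR885095}, is a statement about the Cantor-set-type Higman--Thompson groups $\Th_{q,r}=G_{q,r}$ (it is the theorem computing $[\Th_{q,r}:\Th_{q,r}']$), not about the order-preserving groups $\Fh_{q,r}$; it does not assert that $\Fh_{q,r}'$ is simple. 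Worse, the simplicity of $\Fh_{q,r}'$ is precisely what the paper \emph{deduces from} this lemma (via the Corollary $\Fh_{q,r}'\cong\mathrm{B}\Fh_q'$ together with Theorem \ref{thm:order}). Higman-type displacement arguments apply only to the boundedly supported subgroup, since an element of $\Fh_{q,r}$ whose support accumulates at $0$ or $r$ cannot be displaced off itself by any conjugate; consequently the known proofs of simplicity of $\Fh_{q,r}'$ all factor through the identity $\Fh_{q,r}'=\mathrm{B}\Fh_{q,r}'$, equivalently through the perfectness of $\Fh_{q,r}'$, which by your own sandwich $\Fh_{q,r}''\subseteq\mathrm{B}\Fh_{q,r}'\subseteq\Fh_{q,r}'$ is equivalent to the statement being proved. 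So the argument is circular, and your closing paragraph concedes the difficulty without repairing it.

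The paper closes the loop with an elementary compression trick, no simplicity needed: regard $\Fh_{q,r}$ as a subgroup of $\mathrm{B}\Fh_q$ (extend each element by the identity outside $(0,r)$), choose points of $\Zq$ with $\supp(f)\subseteq(b_{-j},b_j)\subseteq(0,r)$, and choose a piecewise-affine order isomorphism $\psi_j\colon\Zq\to(0,r)\cap\Zq$ that is the identity on $[b_{-j},b_j]\cap\Zq$. Conjugation by $\psi_j$ gives an isomorphism $\psi_j^*\colon\mathrm{B}\Fh_q\to\mathrm{B}\Fh_{q,r}$ with $\psi_j^*(f)=f$; applying it to a commutator expression $f=\prod_{i=1}^m[g_{2i-1},g_{2i}]$ with $g_i\in\Fh_{q,r}\subset\mathrm{B}\Fh_q$ yields $f=\prod_{i=1}^m[\psi_j^*(g_{2i-1}),\psi_j^*(g_{2i})]$ with all entries in $\mathrm{B}\Fh_{q,r}$, hence $f\in\mathrm{B}\Fh_{q,r}'$. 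You should replace your simplicity appeal by an argument of this kind.
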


\begin{proof}
%
(2) It is obvious that $\mathrm{B}\Fh_{q,r}'\subseteq \Fh_{q,r}'$. Let us prove
$\supseteq$. Note that $\Fh_{q,r}'\subseteq\mathrm{B}\Fh_{q,r}$ (because for
$g_1,g_2\in \Fh_{q,r}$, the element $[g_1,g_2]$ acts as the identity in some
small neighbourhoods of $0$ and $r$).  Thus, if $f\in \Fh_{q,r}'$, then
$\supp(f)\subseteq(b_{-j},b_j)$, for some $j\in\B Z$.  Therefore
$f(b_{-j},b_j)=(b_{-j},b_j)$. A slight modification of $\psi$ above gives
$\psi_j\colon\Zq\to(0,r)\cap\Zq$ which 

\begin{itemize}
\item sends $(-\infty,b_{-j}]\cap\Zq$ piecewise affinely onto $(0,b_{-j}]\cap\Zq$, 
\item is the identity on $[b_{-j},b_j]\cap\Zq$,
\item sends $[b_{j},+\infty)\cap\Zq$ piecewise affinely onto $[b_j,r)\cap\Zq$.
\end{itemize}
Then $\psi_j^*(x) = \psi_jx\psi_j^{-1}$ is another isomorphism between $\mathrm{B}\Fh_q$
and $\mathrm{B}\Fh_{q,r}$, such that $\psi_j^*(f)=f$ (we regard $\Fh_{q,r}$ as a
subgroup of $\mathrm{B}\Fh_q$). Write $f=\prod_{i=1}^m[g_{2i-1},g_{2i}]$, for $g_i\in
\Fh_{q,r}\subset\mathrm{B}\Fh_q$.  Then
$f=\psi_j^*(f)=\prod_{i=1}^m[\psi_j^*(g_{2i-1}),\psi_j^*(g_{2i})]\in
\mathrm{B}\Fh_{q,r}'$.
\end{proof}

We consider the action of $\mathrm{B}\Fh_q$ on $\Zq$ and its orbits. Let $I\lhd\Zq$
be the ideal of $\Zq$ generated by $(q-1)$.

\begin{lemma}[{\cite[Theorem \textSC{A}4.1, Corollary \textSC{a}5.1]{MR3560537}}] \label{lem:id}\ 
\begin{enumerate}
\item $I$ is $\mathrm{B}\Fh_q$-invariant.
\item $\mathrm{B}\Fh_q$ acts in a doubly-transitive way on $I$. In particular, the action is proximal.
\end{enumerate}
\end{lemma}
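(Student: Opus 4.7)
My plan is to prove (1) by tracing the piecewise-affine structure of a general $g \in \mathrm{B}\Fh_q$ across its breakpoints, and (2) in two steps: transitivity, then transitivity of a point stabilizer.

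For (1), since $g$ is boundedly supported, its leftmost affine piece is the identity, with slope $q^0 = 1$ and constant $0 \in I$. At each breakpoint $b \in \Zq$, if the adjacent pieces are $x \mapsto q^{k_1} x + c_1$ and $x \mapsto q^{k_2} x + c_2$, continuity at $b$ gives $c_2 - c_1 = (q^{k_1} - q^{k_2}) b$. The crucial fact is $q^k - 1 \in I$ for every $k \in \B Z$: for $k \geq 0$, $q^k - 1 = (q-1)(q^{k-1} + \cdots + 1) \in (q-1)\B Z \subseteq I$; for $k < 0$, $q^k - 1 = -q^k(q^{-k} - 1) \in \Zq \cdot I = I$. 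Hence $q^{k_1} - q^{k_2} \in I$, and since $I$ is an ideal of $\Zq$, the increment $(q^{k_1} - q^{k_2}) b$ lies in $I$. Inducting across breakpoints, every constant term lies in $I$, so $g$ sends $I$ to $I$.

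For the transitivity part of (2), given $x < y$ in $I$, I set $A := x - (y - x)/(q - 1)$ and $B := y + (y - x)/(q - 1)$; both lie in $\Zq$ since $y - x \in I = (q-1)\Zq$. Then define $g$ to be the identity outside $[A, B]$, affine with slope $q$ on $[A, x]$ (mapping onto $[A, y]$), and slope $q^{-1}$ on $[x, B]$ (mapping onto $[y, B]$); a short verification gives $g \in \mathrm{B}\Fh_q$ with $g(x) = y$. For the point-stabilizer step, conjugation by such a transitivity element reduces me to the case of the fixed point $0$; since $\mathrm{B}\Fh_q$ is order-preserving, $\stab(0)$ preserves $I \cap (0, \infty)$. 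For any $y > 0$ in $I$, the ``tent map'' which is identity off $[0, (q+1)y]$ with slope $q$ on $[0, y]$ and slope $q^{-1}$ on $[y, (q+1)y]$ lies in $\mathrm{B}\Fh_q \cap \stab(0)$ and sends $y$ to $qy$; iterating yields the scalings $y \mapsto q^k y$ for every $k \in \B Z$. Using these, I may assume $y_1, y_2 \in I \cap [1, q)$; then $y_2 - y_1 < q - 1 \leq (q - 1) y_1$, i.e.\ $q y_1 > y_2$, so the transitivity construction applied to $y_1 < y_2$ produces an element supported in $[A', B']$ with $A' > 0$, which therefore fixes $0$. Composing the relevant scalings with the in-band move shows $\stab(0)$ acts transitively on $I \cap (0, \infty)$, and symmetrically on $I \cap (-\infty, 0)$; double transitivity of $\mathrm{B}\Fh_q$ on $I$ then follows by the standard composition argument.

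The main obstacle will be the stabilizer-transitivity step: the naive two-piece construction works only when $y_2 / y_1 < q$, because otherwise $A' \leq 0$ and the support of the map crosses $0$. The scaling argument circumvents this by reducing to the fundamental band $[1, q)$. Throughout, the twin constraints that all breakpoints lie in $\Zq$ and all slopes in $q^\B Z$ are compatible precisely because $(y - x)/(q - 1) \in \Zq$ whenever $y - x \in I$, providing just enough algebraic flexibility to realise the required surgeries within $\mathrm{B}\Fh_q$.
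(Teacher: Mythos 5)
Your proof is correct, but note that the paper itself gives no proof of this lemma: it is quoted directly from Bieri--Strebel \cite{MR3560537} (Theorem A4.1 and Corollary A5.1), whose setting is the general group $B(\B R;A,P)$ of boundedly supported piecewise affine maps with slopes in an arbitrary subgroup $P\leq \B R_{>0}$ and breaks in a $\B Z[P]$-module $A$, and whose results give invariance of the augmentation submodule $IA$ and in fact high (not merely double) transitivity on it. Your argument is an elementary, self-contained verification of exactly what is needed in the special case $P=q^{\B Z}$, $A=\Zq$: the breakpoint-by-breakpoint computation, resting on $q^k-1\in I$ for all $k\in\B Z$, actually proves the stronger statement $g(x)-x\in I$ (invariance of every coset of $I$), which is the form in which Bieri--Strebel state it; and the two-piece move together with the tent-map scalings in $\stab(0)$ gives transitivity of a point stabiliser on the points to its right, whence 2-o-transitivity by the standard composition (double transitivity here must of course be read as transitivity on increasing pairs, the only sensible notion for order-preserving actions, and this is what your argument delivers). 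Two steps you leave implicit are fine but deserve a word: agreement of the two affine pieces at a breakpoint follows from surjectivity of $g$ on the dense set $\Zq$ (an order-preserving jump would miss infinitely many points), and transferring stabiliser transitivity from $0$ to an arbitrary $p\in I$ by conjugation uses that elements of $\mathrm{B}\Fh_q$ map $I$ onto, not merely into, $I$, which follows from applying part (1) to both $g$ and $g^{-1}$.
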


%

As a corollary of the above lemmata we get that groups $\Fh_{q,r}$ satisfy the assumptions
of Theorem \ref{thm:doubly-transitive}.

\begin{corollary}
$\Fh_{q,r}'\cong\mathrm{B}\Fh_q'$ is \bound-uniformly simple and the commutator width of it is at most two.
\end{corollary}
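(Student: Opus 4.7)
The plan is to deduce the corollary as a direct combination of the two preceding lemmata with Theorem \ref{thm:order}. First I would use Lemma \ref{lem:ht}(1) to identify $\mathrm{B}\Fh_{q,r}$ with $\mathrm{B}\Fh_q$, and then invoke Lemma \ref{lem:ht}(2) to transfer the identification to commutator subgroups: $\Fh_{q,r}' = \mathrm{B}\Fh_{q,r}' \cong \mathrm{B}\Fh_q'$. This reduces the statement to verifying that $\mathrm{B}\Fh_q$ satisfies the hypotheses of Theorem \ref{thm:order}.

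Next I would choose the linearly ordered set to act on: by Lemma \ref{lem:id}(1), the ideal $I \lhd \Zq$ generated by $q-1$ is $\mathrm{B}\Fh_q$-invariant, so $\mathrm{B}\Fh_q$ restricts to an action on $(I,\leq)$ by order-preserving bijections. By Lemma \ref{lem:id}(2) this action is doubly-transitive, and, as noted in the introduction right after Theorem \ref{thm:doubly-transitive}, doubly-transitive actions are proximal. Boundedness of the action is built into the definition of $\mathrm{B}\Fh_q$: every element has support contained in some interval $(x,y)$ with $x,y \in \Zq$, which in particular bounds the support inside $I$.

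The only point requiring a small remark is faithfulness of the action on $I$. A nontrivial element $g\in\mathrm{B}\Fh_q$ is piecewise affine with finitely many breakpoints in $\Zq$ and slopes $q^k$; since $I$ is cofinal and coinitial in $\Zq$ and accumulates at every point of $\Zq$, a nontrivial $g$ cannot fix $I$ pointwise (otherwise on each affine piece it would fix infinitely many points and hence be the identity on that piece, forcing $g = \id$). Therefore the restricted action of $\mathrm{B}\Fh_q$ on $(I,\leq)$ is faithful, order-preserving, bounded and proximal.

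Applying Theorem \ref{thm:order} to this action yields that $\mathrm{B}\Fh_q'$ is \bound-uniformly simple with commutator width at most two, and by the identification above the same holds for $\Fh_{q,r}'$. I do not foresee a real obstacle here: the work has already been done in Theorem \ref{thm:order} and the two lemmata; the only mildly subtle point is the faithfulness verification on the ideal $I$ rather than on the full group $\Zq$.
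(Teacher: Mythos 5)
Your proposal is correct and follows essentially the same route the paper intends: identify $\Fh_{q,r}'\cong\mathrm{B}\Fh_q'$ via Lemma \ref{lem:ht}, use Lemma \ref{lem:id} to get a bounded, doubly-transitive (hence proximal) action on the ideal $I\lhd\Zq$, and apply Theorem \ref{thm:order}. Your explicit check of faithfulness on $I$ (via density of $I$ and rigidity of affine pieces) is a small point the paper leaves implicit, but it is not a different argument.
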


\begin{remark} \label{rem:ex}
Theorem \ref{thm:order} applies to the following groups.
\begin{itemize}
\item Bieri and Strebel \cite{MR3560537} define more general class of groups
acting boundedly on $\B R$.  They take a subgroup $P$ in the multiplicative
group $\B R_{>0}$ and a $\B Z[P]$-submodule $A<\B R$ and define $\Gamma:=B(\B
R;A,P)$ to be a group of boundedly supported automorphisms of $\B R$ consisting
of piecewise affine maps with slopes in $P$ and singularities in $A$.  They
define an augmentation ideal $I=\langle p-1|p\in P\rangle$ of $\B Z[P]$ and
prove that $\Gamma$ acts highly transitive on $IA$.  Thus $\Gamma'$ is \bound-uniformly
simple.
\item Another example of doubly-transitive and bounded action on a linear order
(thus satisfying the assumptions of Theorem \ref{thm:order}) was considered by
Chehata in \cite{MR0047031}, who studied partially affine transformations of an
ordered field and proved that this group is simple.  Theorem \ref{thm:order}
implies that the Chehata group is \bound-uniformly simple.
\end{itemize}
\end{remark}

\section{Proximality, primitivity, and double-transitivity} \label{sec:para}

In this section we prove (Theorem \ref{thm:compa}) that proximality (from the previous Section) and order-primitivity are equivalent properties for
bounded group actions. In general, these properties are inequivalent.  The action of
the group of integers on itself is primitive but neither proximal nor bounded. We also give an example of bounded, transitive and proximal action, which is not doubly-transitive (Theorem \ref{ex:proximal-not-2-transitive}).

An action of a group $\Gamma$ on a linearly ordered set $(I,\leq)$ is called
\textbf{primitive} (or \textit{order-primitive} by some authors), 
if for any other linearly ordered set $(J,\leq)$ and homomorphism
$\Psi\colon\Gamma \to\aut(J,\leq)$ and order preserving
equivariant map $\psi\colon(I,\leq)\to(J,\leq)$ (that is $\psi(\gamma x)=\Psi(\gamma)\psi(x)$),
the map $\psi$ is injective or $\psi(I)$ is a singleton.

\begin{theorem} \label{thm:compa}
Every proximal action is primitive.  Any bounded and primitive action is proximal.
\end{theorem}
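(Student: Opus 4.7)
The theorem splits into two implications, which I will handle separately.

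For the first direction (\emph{proximal $\Rightarrow$ primitive}), my plan is to start with an order-preserving $\Gamma$-equivariant map $\psi\colon(I,\leq)\to(J,\leq)$ and show that the kernel relation $x\sim y\iff\psi(x)=\psi(y)$---a $\Gamma$-invariant, convex equivalence on $I$---is either equality or the total relation. I will argue by contradiction: suppose there exist $x<y$ with $x\sim y$ and also $a<b$ with $\psi(a)<\psi(b)$. Because proximal actions force $(I,\leq)$ to be densely ordered without endpoints, I can pick $a'<a$ and $b'>b$ in $I$ and then apply proximality to the pair $(x,y)$ and the pair $(a',b')$ to extract $g\in\Gamma$ with $g((x,y))\supseteq(a',b')$, in particular $g(x)<a<b<g(y)$. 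Equivariance of $\psi$ gives $g(x)\sim g(y)$, and convexity of the $\sim$-class of $g(x)$ then forces $\psi(a)=\psi(b)$, contradicting $\psi(a)<\psi(b)$.

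For the second direction (\emph{bounded $+$ primitive $\Rightarrow$ proximal}), I will argue by contrapositive, exhibiting a proper non-trivial $\Gamma$-invariant convex equivalence on $I$ under the assumption that $\Gamma<\mathrm{B}(I,\leq)$ is not proximal; primitivity would then fail. Fix witnesses $a<b$ and $c<d$ with no $g\in\Gamma$ satisfying $g(a)\leq c$ and $g(b)\geq d$. Define the relation $R$ on $I$ by $pRq$ (for $p\leq q$) iff $[p,q]\subseteq f((a,b))$ for some $f\in\Gamma$. I will verify $R$ is reflexive, symmetric, $\Gamma$-invariant, and has the convex-arrow property (if $pRq$ and $p\leq r\leq q$ then $pRr$ and $rRq$), so its transitive closure $\sim$ is a $\Gamma$-invariant equivalence with convex classes---convexity follows from the observation that a chain of pairwise-overlapping translates of $(a,b)$ has an open-interval union, so any point between the chain's endpoints lies in some translate along the chain. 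Since any two points of $(a,b)$ are directly $R$-related, $\sim$ is non-trivial.

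It remains to argue that $\sim$ is not the total equivalence. Were it total, $c$ and $d$ would be connected by a finite chain of overlapping translates $f_1((a,b)),\ldots,f_n((a,b))$ covering $[c,d]$. My plan is to use boundedness together with primitivity to merge successive overlapping pairs into a single translate, collapsing any chain to length one and yielding a single $g\in\Gamma$ with $g((a,b))\supseteq(c,d)$, contradicting non-proximality. This merging lemma---asserting that overlapping $f((a,b)),f'((a,b))$ are jointly covered by some $g((a,b))$---is the technical core and the main obstacle I anticipate. Boundedness is indispensable here, since without it the translation action of $(\B Z,\leq)$ on itself is primitive yet not proximal (and not bounded); primitivity in turn provides the ``local'' richness of $\Gamma$ near the overlap needed to construct the merging element.
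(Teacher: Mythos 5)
Your first implication (proximal $\Rightarrow$ primitive) is correct and is essentially the paper's argument: both reduce to the fact that an order-preserving equivariant $\psi$ with $\psi(x)=\psi(y)$ for some $x<y$ cannot separate any pair $a<b$, because proximality produces $g$ with $g(x)\leq a<b\leq g(y)$ and then $\psi(a)=\psi(b)$ is forced by monotonicity. (The detour through $a'<a$ and $b'>b$ is unnecessary but harmless, and the reflexivity of your relation $R$ at points outside the $\Gamma$-orbit of $(a,b)$ needs the diagonal adjoined by hand; these are cosmetic.)

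The second implication has a genuine gap: the ``merging lemma'' you defer --- that two overlapping translates $f((a,b))$, $f'((a,b))$ are jointly contained in a single translate $g((a,b))$ --- is not proved, and it carries essentially the full content of the theorem. Indeed, once proximality is known, merging is immediate (apply proximality to $(a,b)$ versus the union, which is again an interval), so your deferred lemma is a special case of the conclusion; conversely your whole proof of non-totality rests on it. The natural direct attempt also fails: setting $\gamma=f'f^{-1}$ gives an increasing chain of overlapping translates $\gamma^{n}f((a,b))$ whose right endpoints are bounded (by boundedness of $\gamma$), but there is no reason for the chain to stabilize or to be dominated by a single translate, since $(I,\leq)$ need not be complete. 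The appeal to ``local richness from primitivity'' is not an argument. The paper avoids merging altogether by choosing a different relation: $s\sim_{c,d}t$ (for $s\leq t$) iff no $\gamma$ satisfies $\gamma(s,t)\supseteq(c,d)$. Non-proximality puts the witness pair $a,b$ into this relation, its symmetric-transitive closure is automatically a $\Gamma$-invariant convex congruence, and non-totality is then obtained in two steps that use the hypotheses exactly where you struggle: primitivity supplies (via a two-point quotient) some $\gamma$ with $\gamma(c)\geq d$, and boundedness of $\gamma$ confines the infinitely many pairwise disjoint translates $\gamma^{t}(c,d)$ to an interval $(e,f)$, whence a pigeonhole argument shows $e\not\simeq_{c,d}f$. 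You would need either to prove your merging lemma non-circularly or to switch to a congruence of this second kind for the argument to close.
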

\begin{proof}
Assume the action is not primitive.  Choose $a$, $b$ and $d$ such that $a\neq
b$ and $\psi(a)=\psi(b)\neq\psi(d)$.  Reversing the order if necessary, we may
assume $\psi(b)<\psi(d)$.  Set $c=a$.  This choice contradicts proximality,
as if $g(b,a)\subseteq (d,c)$ then
$$
\psi(d)\leq\Psi(g)\psi(b)=\Psi(g)\psi(a)\leq\psi(c)=\psi(b)<\psi(d).
$$

Assume that action is bounded, but not proximal. Let $a$, $b$, $c$, and $d$
witness the latter. For $x,y\in I$, $x<y$ consider the relation $\sim_{x,y}$ on $I$ defined as
\begin{quote}
$s\sim_{x,y}t$ if $s\leq t$ and there is no $\gamma\in\Gamma$ such that $\gamma(s,t)\supseteq(x,y)$.
\end{quote}
By the assumption $a\sim_{c,d} b$.  Let $\approx_{c,d}$ be the transitive
closure of $\sim_{c,d}$.  The symmetric closure $\simeq_{c,d}$ of
$\approx_{c,d}$ is transitively closed, thus $\simeq_{c,d}$ is an equivalence
relation, which has convex classes. Moreover, $\simeq_{c,d}$ is
$\Gamma$-invariant, that is $x\simeq_{c,d}y$ implies $\gamma
(x)\simeq_{c,d}\gamma(y)$ for all $\gamma\in\Gamma$. It is enough to prove that
$\simeq_{c,d}$ is not total, that is, $e\not\simeq_{c,d} f$ for some $e,f\in
I$, because then the quotient map \[\psi\colon I\to I/\simeq_{c,d}\] proves
nonprimitivity of the action ($I/\simeq_{c,d}$ has a natural $\Gamma$-action).

First, we claim that there is $\gamma\in\Gamma$ such that $\gamma(c)\geq d$.
Indeed, if there is no such group element, define a map $\psi\colon
I\to\{0,1\}$ by the formula
$$
\psi(x)=
\begin{cases}
0&\text{there is no $\gamma\in\Gamma$ such that $\gamma(x)\geq d$},\\
1&\text{there is $\gamma\in\Gamma$ such that $\gamma(x)\geq d$}.\\
\end{cases}
$$
This map would contradict primitivity.

Choose $e$ and $f$ from $I$ such that $\supp(\gamma)\subseteq (e,f)$.  Then
$\left\{\gamma^t(c,d) : t\in\B Z\right\}$ is a countable family of intervals in
$(e,f)$, which are pairwise disjoint. We claim that $e\not\simeq_{c,d} f$, as
otherwise there are $x,y\in [e,f]$, $x< y$ such that $x\sim_{c,d}y$ and $(x,y)$
contains $\gamma^t(c,d)$ for some $t\in\B Z$, which is impossible. 
%
\end{proof}

Clearly, if $\Gamma$ acts proximally on $(I,\leq)$, then in acts in a such way on any orbit.
Thus, we will restrict to transitive actions.

Examples of actions we discuss above are doubly-transitive (cf.~Lemma \ref{lem:id}(2) and
Remark \ref{rem:ex}).  Thus they are proximal.  This property seems to be
easier to check than doubly-transitivity. We construct below an example of
bounded, transitive and proximal action, which is not double-transitive.  The
reader may consult this result with a result of Holland \cite[Theorem
4]{MR0178052}, which says that every bounded, transitive, primitive and closed under $\min$, $\max$ action 
 must be doubly-transitive. Moreover, any
group acting boundedly and transitively cannot be finitely generated.
Indeed, finite number of elements have supports in a common bounded interval,
thus the whole group is supported in that interval, so does not act transitively.

\begin{theorem}\label{ex:proximal-not-2-transitive}
There exists a subgroup $\Gamma<B(\B Q,\leq)$ acting transitively and proximally
but not doubly-transitively.
\end{theorem}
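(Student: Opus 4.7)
The plan is to construct a concrete countable dense linear order $(I,\leq)\cong(\B Q,\leq)$ together with a subgroup $\Gamma<B(I,\leq)$ that is transitive, proximal, and not doubly transitive. By Theorem~\ref{thm:compa}, in the presence of boundedness, proximality is equivalent to primitivity, so the $\Gamma$-invariant distinguishing the orbits on ordered pairs cannot come from a convex congruence on $I$.

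The central tension to resolve is the following. Any $\Gamma$-invariant colouring of individual points of $I$ is automatically preserved pointwise by a bounded-support element (since outside its support every colour is realised identically), and consequently the preservation of such a colouring forces $\Gamma$ to fix each colour class setwise, destroying transitivity. Hence the pair invariant distinguishing orbits must be genuinely global: it must be a function on pairs, not on points, whose local variations under bounded automorphisms cancel globally.

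My strategy is to interleave two copies of $\B Q$ into $I$ in a non-lexicographic fashion (avoiding a convex congruence on either copy), and to take $\Gamma$ to be generated by two families of bounded-support order-preserving bijections of $I$. The first family consists of bijections that preserve the bipartition into the two copies and act richly on each; since neither copy is convex, they already yield a primitive action, and they suffice for proximality within each copy. The second family consists of a carefully designed collection of bounded ``exchange'' elements that move points between the copies in balanced configurations; they restore transitivity on $I$ across the two copies while preserving a $\B Z/2$-valued parity on ordered pairs.

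The main obstacle lies in this second family: the exchanges must be numerous enough to realise transitivity on $I$, yet sufficiently balanced that a non-trivial pair invariant survives globally. A back-and-forth construction, extending prescribed finite partial automorphisms step by step to bounded-support automorphisms with controlled behaviour on a fixed reference pair, is the natural tool to produce them. Once $\Gamma$ is in place, boundedness and order-preservation hold by construction; transitivity follows by combining the two families; proximality follows from primitivity via Theorem~\ref{thm:compa}; and failure of double transitivity is witnessed by the parity invariant, which separates at least two $\Gamma$-orbits on ordered pairs.
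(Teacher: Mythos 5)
There is a genuine gap: the object on which the whole theorem turns --- a nontrivial $\Gamma$-invariant function on ordered pairs that coexists with a transitive, bounded, proximal action --- is never actually defined. You correctly identify the tension (a point colouring is useless because a boundedly supported element fixes points of every colour, so invariance would force the colour classes to be setwise fixed), but your proposed resolution, a ``$\B Z/2$-valued parity on ordered pairs'' attached to two interleaved copies of $\B Q$, is left unspecified, and the natural candidate runs into exactly the obstruction you describe. If the parity of a pair $(a,b)$ is determined by which copies $a$ and $b$ lie in, then for any boundedly supported $\gamma$ moving some $a$ from one copy to the other, comparing the pair $(a,c)$ with $(\gamma a,c)$ for $c$ a fixed point outside $\supp(\gamma)$ breaks invariance; so $\Gamma$ would have to preserve the two copies setwise, killing transitivity. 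Thus the invariant cannot factor through the endpoint colours, and once that is ruled out it is no longer clear what role the two copies or the back-and-forth construction play. Saying the exchanges must be ``sufficiently balanced that a non-trivial pair invariant survives globally'' restates the problem rather than solving it; as written, nothing in the proposal produces a single verifiable example.

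The paper resolves the tension quite differently: it builds an infinite tower $I_0\subset I_1\subset\cdots$ of linear orders, where $I_{k+1}$ consists of finitely supported $\B Z$-indexed sequences in $I_k$, together with groups $\Gamma_k$ generated by the previous level (acting on one convex block, extended by the identity) and a shift $\sigma_{k+1}$ of the $\B Z$-coordinate. The invariant is $\B Z$-valued, not $\B Z/2$-valued: $f_{k+1}(a,b)=f_k\bigl(a(m_{a,b}),b(m_{a,b})\bigr)$ where $m_{a,b}$ is the first coordinate at which $a$ and $b$ differ, bottoming out at $f_0(n,m)=n-m$ for $\Gamma_0=\B Z$ acting on itself. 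The point that your plan is missing is the mechanism reconciling boundedness with a surviving pair invariant: each shift $\sigma_{k+1}$ is unbounded on $I_{k+1}$ but becomes boundedly supported once $I_{k+1}$ is embedded as a convex block of $I_{k+2}$, so in the limit every generator is bounded, transitivity and proximality are obtained from the shifts, and yet $f_\infty$ is manifestly invariant by construction. If you want to pursue your own route, you would need to exhibit a concrete invariant with this kind of multi-scale structure; a single bipartition of $\B Q$ will not suffice.
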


\begin{proof}
For each $k\in\B N$ we will define a countable linear order $(I_k,\leq)$,
a group $\Gamma_k$ acting on it, and a function $f_k\colon I_k\times I_k\to\B Z$
such that:
\begin{enumerate}
\item $\Gamma_k < \Gamma_{k+1}$;
\item $I_k$ is a $\Gamma_k$-equivariant linear bounded suborder of $I_{k+1}$;
\item for $k>0$, $\Gamma_k$ acts transitively and proximally on $I_k$ by order preserving transformations (but not doubly-transitive);
\item $f_k$ is $\Gamma_k$-invariant: $f_k(\gamma a,\gamma b)=f_k(a,b)$, for $\gamma\in\Gamma_k$, $a,b\in I_k$ and $f_k\subset f_{k+1}$.
\end{enumerate}

Then we take $\Gamma_\infty = \bigcup_{k\in\B N}\Gamma_k$, which acts boundedly,
transitive and proximally, but not doubly-transitive on
$I_\infty = \bigcup_{k\in\B N}I_k$, because of
$f_\infty = \bigcup_{k\in\B N} f_k$, which is a $\Gamma_\infty$-invariant map $I_\infty\times I_\infty\to \B Z$.


Since $(I_\infty,\leq)$ is a countable and, by proximality, dense linear order
without ends, it is isomorphic to $(\B Q,\leq)$.

In the following inductive construction we will define three auxiliary points
$i_k^-<i_k<i_k^+$ from $I_k$.

We put $\Gamma_0=\B Z$ and $I_0=\B Z$, where $\Gamma_0$ acts on $I_0$ by translations. Let $f_0(n,m)=n-m$ and $i_0^-=-1$, $i_0=0$, $i_0^+=1$.

Assume we have constructed $I_k$, $\Gamma_k$, and $f_k$.
Let $I_{k+1} = \left\{a\in {I_k}^{\B Z} : \forall^\infty n\in \B Z\ a(n)=i_k \right\}$ and $i_{k+1}(n)=i_k$
for all $n\in\B Z$. In plain words, $I_{k+1}$ consists of all functions from $\B Z$ to $I_k$
which differ from a constant function (denoted by $i_{k+1}$) taking the value $i_k$,
only at finite many places.  Define a linear order on $I_{k+1}$ by putting $a<b$
if $\min\left\{n\in \B Z:a(n)<b(n)\right\}<\min\left\{n\in \B Z:a(n)<b(n)\right\}$,
with the convention that $\min\varnothing >n$ for all $n\in\B Z$.
Note that $I_k$ embeds into $I_{k+1}$:
$$
I_k \ni a\mapsto\left(n\mapsto\begin{cases}a&\text{if }n=0,\\i_k&\text{otherwise}\\\end{cases}\right)\in I_{k+1}.
$$

Consider $\conv(I_k)=\left\{a\in I_{k+1} : a(n)=i_k \text{ for all } n<0\right\}$, with the following action of\/ $\Gamma_k$:
$$
(\gamma a)(n) = \begin{cases}\gamma a(0)&\text{if }n=0,\\a(n)&\text{otherwise}\\\end{cases}.
$$
Define
$i_{k+1}^{\pm}(n)=\begin{cases}i_k^{\pm}&\text{if }n=-1,\\0&\text{otherwise}\\\end{cases}$.
The interval $(i_{k+1}^-,i_{k-1}^+)\subset I_{k+1}$ contains the embedded copy of $I_k$.

Extend the action of\/ $\Gamma_k$ to the whole of $I_{k+1}$ by the identity
on the complement $I_{k+1}\smallsetminus \conv(I_k)$.
Thus the action of\/ $\Gamma_k$ on $I_{k+1}$ is bounded.
Define yet another automorphism $\sigma_{k+1}$ of $I_{k+1}$ by $(\sigma_{k+1}a)(n)=a(n+1)$.
Let $\Gamma_{k+1}$ to be the group generated by $\Gamma_k$ and $\sigma_{k+1}$.
The action of\/ $\Gamma_{k+1}$ on $I_{k+1}$ is clearly transitive.

For every pair $a\neq b$ from $I_{k+1}$, define $m_{a,b}=\min\{n\in\B Z : a(n)\neq b(n)\}$.

For $a<b$ and $c<d$ let $\gamma\in\Gamma_k$ be such that
$(c(m_{c,d}),d(m_{c,d}))\subseteq\gamma(a(m_{a,b}),b(m_{a,b}))$
(such $\gamma$ exists by proximality of the action of $\Gamma_k$ on $I_k$).  Then
$(c,d)\subseteq \sigma_{k+1}^{-m_{c,d}}\gamma\sigma_{k+1}^{m_{a,b}+1}(a,b)$,
which proves the proximality of the action of $\Gamma_{k+1}$ on $I_{k+1}$.

Finally, define
$f_{k+1}(a,b) = f_k(a(m_{a,b}),b(m_{a,b}))$.
Clearly, $f_{k+1}$ is $\Gamma_{k+1}$-invariant, hence the action of
$\Gamma_{k+1}$ on $I_{k+1}$ is not doubly-transitive.
\end{proof}

The element $\sigma_k\in\Gamma_k$ stabilizes $i_k$ and has unbounded orbits on $(i_k,\infty)\subset I_k$.
Thus the stabiliser of $i_\infty=\lim i_k$ has unbounded orbits on $(i_\infty,\infty)\subset I_\infty$.
This is enough to conclude that the action is proximal.

\begin{question}
Is there any transitive, proximal bounded action without the property that point stabilisers have unbounded orbits?
\end{question}

\section{Extremely proximal actions on a Cantor set and uniform simplicity}
\label{sec:Cantor}

The main goal of the present section is prove Theorem \ref{thm:C}, which gives a criterion for a group acting on a Cantor set to be \boundd-uniformly simple.

Let $C$ be a Cantor set. Assume that a discrete group $\Gamma$ acts on $C$ by homeomorphisms.
By the \textbf{topological full group} $\G \Gamma<\homeo(C)$ of\/ $\Gamma$ we define (see e.g. \cite{MR1710743})
$$
{\G \Gamma}=\left\{g\in\homeo(C):
\begin{varwidth}{0.5\textwidth}
for each $x\in C$ there exists a neighbourhood $U$ of $x$ and $\gamma\in\Gamma$
such that
$g|_U=\gamma|_U$
\end{varwidth}
\right\}.
$$

Through this section we assume that:
\begin{itemize}
\item group $\Gamma$ acts faithfully by homeomorphisms on a Cantor set $C$;
\item $\Gamma$ is a topological full group, i.e. $\Gamma=\G\Gamma$;
\item the action is \textbf{extremely proximal}, i.e. 
for any nonempty and proper clopen sets $V_1,V_2\subsetneq C$
there exists $g\in\Gamma$ such that $g(V_2)\subsetneq V_1$.
\end{itemize}

The second assumption is not hard to satisfy as $\G \Gamma=\G{ \G \Gamma}$.

\begin{theorem}\label{thm:C}
Assume that $\Gamma$ satisfies the above assumptions.
Then $\Gamma'$, the commutator subgroup of\/ $\Gamma$, is \boundd-uniformly simple.
The commutator width of\/ $\Gamma'$ is at most three. Therefore, if $\Gamma$ is perfect (i.e. $\Gamma'=\Gamma$), then $\Gamma$ is \boundd-uniformly simple.
\end{theorem}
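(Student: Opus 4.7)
The plan is to apply Proposition~\ref{prop:bip2} to the rigid stabiliser of a proper clopen subset, and then bridge from $\Gamma'$ back to this subgroup at the cost of one extra conjugate of $g^{\pm 1}$ (respectively one extra commutator). For a proper clopen set $V \subsetneq C$, put $\Gamma_V := \{h \in \Gamma : \supp(h) \subseteq V\}$. Given any nontrivial $g \in \Gamma'$, extreme proximality produces a conjugate $h$ of $g$ with $h(V) \cap V = \varnothing$; any such $h$ displaces $\Gamma_V$, since supports of elements of $\Gamma_V$ lie in $V$ while their $h$-conjugates have supports in $h(V)$. For the second hypothesis of Proposition~\ref{prop:bip2}, given a finitely generated $H \leq \Gamma_V$ and any $k \in \B N$, the generators of $H$ have common support contained in some proper clopen $K \subsetneq V$; iterating extreme proximality inside $V$ (itself a Cantor set on which $\Gamma_V$ acts extremely proximally by the topological-full-group definition), one constructs $\sigma \in \Gamma_V$ with $\sigma^j(K) \cap K = \varnothing$ for $j = 1, \dots, k$, and such a $\sigma$ $k$-displaces $H$ in $\Gamma_V$. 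Proposition~\ref{prop:bip2} then gives that every element of $\Gamma_V'$ is a product of four $g$-commutators, hence at most eight conjugates of $g^{\pm 1}$, and that $\Gamma_V'$ has commutator width at most two.

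The key new step is the bridging: for any $f \in \Gamma'$, I aim to show $f = g^{\ast} \cdot f_0$ with $g^{\ast}$ a single conjugate of $g^{\pm 1}$ and $f_0 \in \Gamma_V'$ for some proper clopen $V$. The strategy is local matching: by extreme proximality one may choose $V$ and a conjugate $g^{\ast}$ so that $g^{\ast}$ agrees with $f$ on $C \setminus V$; the assumption $\Gamma = \G{\Gamma}$ makes such a matching feasible, since membership in $\Gamma$ is only a local condition. Then $f_0 := (g^{\ast})^{-1} f$ lies in $\Gamma_V$, and placing $f_0$ inside $\Gamma_V'$ uses Lemma~\ref{lem:perfect} applied to $\Gamma_V$, together with the commuting-up-to-conjugation supplied by extreme proximality inside $V$. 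A parallel argument, replacing $g^{\ast}$ by a single commutator of $\Gamma'$, delivers the commutator-width reduction.

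Combining, every $f \in \Gamma'$ is a product of $1 + 8 = 9$ conjugates of $g^{\pm 1}$ and a product of $1 + 2 = 3$ commutators in $\Gamma'$, proving both claims. The main obstacle is this bridging step: matching a single conjugate of an arbitrary nontrivial $g$ against $f$'s behaviour off a chosen clopen set $V$ is delicate and not formal. I expect it will require a preliminary fragmentation of $f$ along a well-chosen clopen cover of $C$, together with careful use of both the topological-full-group definition (to make local matching possible) and extreme proximality (to realise the global conjugate).
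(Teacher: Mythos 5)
Your overall architecture coincides with the paper's: write $f=g_1^{-1}f_1$ with $g_1$ a single conjugate of $g^{\pm1}$ and $f_1$ supported in a proper clopen set, then apply Proposition \ref{prop:bip2} to a local subgroup, arriving at $1+8=9$ conjugates and $1+2=3$ commutators. However, three steps of your plan have genuine gaps. The first is your verification of the hypotheses of Proposition \ref{prop:bip2} for $\Gamma_V$: you assert that a finitely generated $H\leq\Gamma_V$ has its support contained in a \emph{proper} clopen $K\subsetneq V$. This is false in general --- an element of $\Gamma_V$ can have support whose closure is all of $V$ (for instance, in the Neretin group, an automorphism of a halftree fixing exactly one boundary end), and even if the supports were clopen their union could equal $V$. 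In that case there is no room inside $V$ to build your displacing element $\sigma$; the construction in Lemma \ref{lem:bip-element} genuinely needs a proper containment $U\subsetneq V$. The paper avoids exactly this by fixing a puncture point $\omega\in V_0\smallsetminus V_1$ and applying Proposition \ref{prop:bip2} not to a rigid stabiliser but to $\Gamma_0=\bigcup_W\Gamma_W$, the union over clopen $W\subseteq V_0\smallsetminus\{\omega\}$, so that every finitely generated subgroup automatically has support in a clopen avoiding $\omega$, hence properly inside $V_0$.

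Second, your device for placing $f_0=(g^\ast)^{-1}f$ inside $\Gamma_V'$ does not do the job: Lemma \ref{lem:perfect} applied to $\Gamma_V$ only shows that commutators of $\Gamma_V$ are commutators of $\Gamma_V'$ (perfection of $\Gamma_V'$); it gives no reason why an element of $\Gamma_V\cap\Gamma'$, whose commutator expression a priori involves elements of $\Gamma$ not supported in $V$, should lie in the commutator subgroup of a local group. The paper needs a separate trick, Lemma \ref{lem:commutant}: an isomorphism $\Psi\colon\Gamma\to\Gamma_U$ onto the rigid stabiliser of a slightly larger clopen $U\supsetneq V$ which restricts to the identity on $\Gamma_V$, through which a global commutator expression of $f_0$ is transported into $\Gamma_U'$. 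Third, the bridging step itself --- producing a conjugate of $g$ (and the conjugator must lie in $\Gamma'$, since the relevant conjugacy class lives in $\Gamma'$) that agrees with $f$ on some clopen set --- is the content of Lemma \ref{lem:fragmentation}(3), a nontrivial construction built from the swaps $\tau_{h,U}$ supplied by the topological full group assumption; you correctly identify it as the main obstacle but leave it unproved. So as it stands the proposal is an outline of the paper's strategy in which its three key ingredients (the fragmentation lemma, the commutant embedding, and the punctured local group) are either missing or replaced by arguments that do not suffice.
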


Before proving \ref{thm:C}, we need a couple of auxiliary lemmata.

Suppose $x\in C$ and $h\in\Gamma$. By the Hausdorff property of $C$, if
$h(x)\neq x$, then there exists a clopen subset $U\subset C$ containing $x$,
such that $h(U)\cap U=\varnothing$.  In such a situation we define an element
$\tau_{h,U}\in\Gamma$ exchanging $U$ and $h(U)$:
$$
\tau_{h,U}(x)=
\begin{cases}
x&\mbox{if }x\not\in U\cup h(U),\\
h(x)&\mbox{if }x\in U,\\
h^{-1}(x)&\mbox{if }x\in h(U).\\
\end{cases}
$$
Such an element belongs to $\Gamma$, since $\Gamma=\G\Gamma$ is a topological full group. Observe that $\tau_{h,U}^2=\id$ and $f\tau_{h,U}
f^{-1}=\tau_{\presp{f}h,f(U)}$, for $f\in\Gamma$.

\begin{lemma}\label{lem:fragmentation}
Assume $\Gamma$ acts extremely proximally on a Cantor set $C$.
\begin{enumerate}
\item $\Gamma'$ acts extremely proximally on $C$.
\item For any nontrivial $f\in\Gamma$ and a proper clopen $V\subsetneq C$
there is $h\in\Gamma'$ such that $V\cap\presp{h}f(V)=\varnothing$.
\item Let $f,g\in\Gamma$ be nontrivial.  Then there is $h\in\Gamma'$ such
that $\presp{h}g.f$ is supported outside a clopen subset.
\end{enumerate}
\end{lemma}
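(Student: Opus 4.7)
The unifying mechanism is the involution $\tau_{h,U}$ introduced just before the lemma, together with the observation that two conjugate involutions in $\Gamma$ always multiply to a commutator: if $\sigma'=\rho\sigma\rho^{-1}$ then $\sigma\sigma'=[\sigma,\rho]\in\Gamma'$ (using $\sigma^{-1}=\sigma$). When additionally their supports are disjoint, $\sigma\sigma'$ agrees with $\sigma$ on $\supp\sigma$. Extreme proximality lets us conjugate the support of any element into the complement of a prescribed proper clopen, so the disjoint-support hypothesis is always achievable.

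For (1), given proper clopens $V_1, V_2 \subsetneq C$, extreme proximality of $\Gamma$ yields $g \in \Gamma$ with $g(V_2) \subsetneq V_1$; after slightly shrinking $V_1$ beforehand, we may also assume $V_2 \cup g(V_2) \subsetneq C$ is proper. The involution $\sigma := \tau_{g, V_2}$ then has support $V_2 \cup g(V_2)$ and satisfies $\sigma(V_2)\subsetneq V_1$. A second application of extreme proximality furnishes $\rho\in\Gamma$ with $\rho(\supp\sigma)\cap\supp\sigma=\varnothing$, and then $h:=\sigma\cdot\rho\sigma\rho^{-1}\in\Gamma'$ still sends $V_2$ into $V_1$. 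Part (2) follows swiftly from (1): pick a nonempty clopen $U\subsetneq C$ with $U\cap f(U)=\varnothing$ and use the extreme proximality of $\Gamma'$ just established to find $h\in\Gamma'$ with $h^{-1}(V)\subsetneq U$; then $h^{-1}(V)\cap f(h^{-1}(V))\subseteq U\cap f(U)=\varnothing$, and applying $h$ gives $V\cap\presp hf(V)=\varnothing$.

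Part (3) is the substantial step. Pick clopens $V, U$ with $V\cap g(V)=\varnothing$, $U\cap f(U)=\varnothing$, both shrunk so that $V\cup g(V)$ and $U\cup f(U)$ are proper. By (1), take $h_0\in\Gamma'$ with $h_0(V\cup g(V))\subsetneq C\setminus(U\cup f(U))$; write $g'=h_0 g h_0^{-1}$, $V'=h_0(V)$. By extreme proximality of $\Gamma$, pick $\alpha_0\in\Gamma$ with $\alpha_0(V')\subsetneq f(U)$, and set $Y=\alpha_0(V')$, $X=f^{-1}(Y)\subsetneq U$; the four clopens $V', g'(V'), Y, X$ are pairwise disjoint. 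Let $\sigma\in\Gamma$ be the involution supported on their union that swaps $V'\leftrightarrow Y$ via $\alpha_0$ and swaps $g'(V')\leftrightarrow X$ via $f^{-1}\alpha_0(g')^{-1}\in\Gamma$. Since both gluing pieces are restrictions of genuine elements of $\Gamma$ and $\Gamma=\G\Gamma$, $\sigma$ really lies in $\Gamma$. A direct chase along the arrows $X \xrightarrow{f} Y \xrightarrow{\sigma}V' \xrightarrow{g'}g'(V')\xrightarrow{\sigma}X$ shows $\sigma g'\sigma\cdot f$ fixes $X$ pointwise. Upgrading $\sigma$ to $\Gamma'$ by the two-involution trick from (1) produces $\tilde h\in\Gamma'$ that agrees with $\sigma$ on $\supp\sigma$; then $H:=\tilde h h_0\in\Gamma'$ satisfies $\presp Hg=\tilde h g'\tilde h^{-1}$, so $\presp Hg\cdot f$ fixes $X$ pointwise and is in particular supported outside the clopen $X$.

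The main obstacle is bookkeeping of properness: extreme proximality requires proper clopen targets, so one must shrink $U$ and $V$ throughout to keep $U\cup f(U)$, $V\cup g(V)$, their union, and $\supp\sigma$ all strictly inside $C$. The only other delicate point is verifying that the $\sigma$ built in (3) lies in $\Gamma=\G\Gamma$ rather than merely in $\homeo(C)$; this is precisely why we insist on $\alpha_0\in\Gamma$, so that $\sigma$ is locally an element of $\Gamma$ on each piece of an explicit finite clopen partition of $C$.
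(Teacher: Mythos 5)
Your strategy is essentially the paper's: the involutions $\tau_{h,U}$, the trick of multiplying an involution by a disjointly supported conjugate to produce a commutator that still acts like the original involution on its support, part (2) verbatim, and for (3) a conjugate of $g$ that undoes $f$ along a cycle of pairwise disjoint clopens. Parts (2) and (3) check out: the four sets $V'$, $g'(V')$, $Y$, $X$ are indeed pairwise disjoint, your $\sigma$ is locally a restriction of elements of $\Gamma$ on a finite clopen partition and hence lies in $\G\Gamma=\Gamma$, the chase giving $\sigma g'\sigma f|_X=\id_X$ is correct, and the upgrade $\tilde h=\sigma\cdot\rho\sigma\rho^{-1}$ does agree with $\sigma$ on $\supp\sigma$ (as does $\tilde h^{-1}$, since $\supp\sigma$ is $\sigma$-invariant), so $\presp{\tilde h}{g'}\cdot f$ still fixes the nonempty clopen $X$.

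There is, however, one genuine hole, in (1). The element $\tau_{g,V_2}$ is only defined when $g(V_2)\cap V_2=\varnothing$, and this does not follow from $g(V_2)\subsetneq V_1$: if $V_1\subseteq V_2$ (or more generally whenever $V_1$ cannot be shrunk off $V_2$), then $g(V_2)\subseteq V_2$ and your $\sigma$ simply does not exist; shrinking $V_1$ only controls properness of $V_2\cup g(V_2)$, not this disjointness. The repair is exactly the detour the paper builds in: choose a nonempty clopen $W$ with $W\cap(V_1\cup V_2)=\varnothing$ and $W\cup V_1\cup V_2\neq C$, first send $V_2$ into $W$ by an element of $\Gamma'$ (your two-involution trick now applies legitimately, since the target is disjoint from the source), then send the image, which sits in $W$ and is therefore disjoint from $V_1$, into $V_1$ by a second such element; the product is the required element of $\Gamma'$. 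With that patch, the uses of (1) inside (2) and (3) and the rest of your argument go through unchanged.
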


\begin{proof}
(1) Let $U$ and $V$ be nonempty and proper clopen subsets of $C$.
Shrinking $U$, if necessary, we may assume that $U\cup V\neq C$ (that is,
we may always take $g\in \Gamma$ and $U_1=g(U)$, $V_1=g(V)$, such that
$U_1\cup V_1\neq C$; then $h(U_1)\subsetneq V_1$ implies $h^g(U)\subsetneq V$).
By extremal proximality, find elements $g_1$, $g_2$, $h_1$, and $h_2$ in $\Gamma$ such that
$g_1(U)\subsetneq C\smallsetminus (U\cup V)$, $g_2(U)\subsetneq C\smallsetminus (U\cup V\cup g_1(U))$,
$h_1(V)\subsetneq g_1(U)$, $h_2(U)\subsetneq C\smallsetminus (U\cup V\cup g_1(U))$.
Define $g=\tau_{g_2,U}\tau_{g_1,U}$ and $h=\tau_{h_2,U}\tau_{h_1,U}$.

It is straightforward to check that, since $U$, $g_1(U)$, and $g_2(U)$ are
pairwise disjoint, we have $g^3=1$ which is equivalent to
$$
g=\tau_{g_2,U}\tau_{g_1,U}=\left[\tau_{g_1,U}\tau_{g_2,U}\right].
$$
And similarly for $h$.  In particular, $g$ and $h$ belong to $\Gamma'$. Furthermore, $g^{-1}h(U)=g^{-1}h_1(U)\subsetneq g^{-1}g_1(V)=V$.

(2) Choose $U$ to be a nonempty clopen such that $f(U)\cap U=\varnothing$.
Choose, by (1), $h\in\Gamma'$ such that $h^{-1}(V)\subsetneq U$.  Then $V\cap\presp{h}f(V) \subseteq h(U\cap f(U))= \varnothing$.

(3) We may choose clopens $U$ and $V$ such that $f(U)\cap U=\varnothing=g(V)\cap V$.
If $h_1\in\Gamma'$ satisfies $h_1^{-1}(U)\subsetneq V$, then $\presp{h_1}g(U)\cap U=\varnothing$ (such a $h_1$ exists by (2)).

If $\presp{h_1}gf$ is the identity on $U$ the proof is finished. Otherwise
define $\gamma=\presp{h_1}g$.
We may find $W\subset U$ such that $\gamma f(W)\cap W=\varnothing$ and $W\cup f(W)\cup \gamma^{-1}(W)\subsetneq C$.
Notice that $\gamma^{-1}(W)$, $W$ and $f(W)$ are pairwise disjoint.

Choose $\eta\in\Gamma$ such that $\eta(W)\cap \left(W\cup f(W)\cup \gamma^{-1}(W)\right)=\varnothing$.
Put $\tau_1=\tau_{\eta\gamma,\gamma^{-1}W}$, $\tau_2=\tau_{f\gamma,\gamma^{-1}(W)}$ and $h_2=[\tau_1,\tau_2]$.
As in (1), we have that $h_2=\tau_1\tau_2\in\Gamma'$ and if $w\in W$, then
$h_2(w)=w$ and $h_2\gamma^{-1}(w)=\tau_1f^{-1}w=f^{-1}w$.

Hence $\presp{h_2h_1}gf=\presp{h_2}\gamma f$ is the identity on $W$.
Indeed, let $w\in W$.  Then $f(w)\in f(W)$.  Thus  $h_2^{-1}f(w)=\gamma^{-1}(w)$
i.e. $\gamma h_2^{-1}f(w)=w\in W$.  Therefore
$h_2\gamma h_2^{-1}f(w)=w$.
\end{proof}

For any clopen $U\subset C$, let $\Gamma_U$ be the subgroup of\/ $\Gamma$
consisting of elements of\/ $\Gamma$ supported on $U$.

\begin{lemma}\label{lem:commutant}
Let $V\subsetneq C$ be a proper clopen set.
Then there exists a proper clopen $V\subsetneq U\subsetneq C$ such that $\Gamma'\cap \Gamma_V\subset \Gamma_U'$.
\end{lemma}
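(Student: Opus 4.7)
The plan is to apply Proposition~\ref{prop:bip2} to the pair $\Gamma_V < \Gamma_U$ for a well-chosen $U$, obtaining $\Gamma_V' \subseteq \Gamma_U'$, and then to close the remaining gap by arguing $\Gamma' \cap \Gamma_V \subseteq \Gamma_V'$.

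First I would use extremal proximality together with the topological full group hypothesis $\Gamma = \G\Gamma$ to produce the following setup: pairwise disjoint clopens $V, V_1, V_2$ (with $V_i = h_i(V)$ for some $h_i \in \Gamma$) sitting inside a proper clopen $U \subsetneq C$, together with two involutions $\sigma_1, \sigma_2 \in \Gamma_U$ swapping $V$ with $V_1$ and $V_2$ respectively, chosen so that $(\sigma_1 \sigma_2)^3 = \id$ on the cyclically permuted pieces. Just as in the proof of Lemma~\ref{lem:fragmentation}(1), this forces the $3$-cycle $g := \sigma_1 \sigma_2 = [\sigma_1, \sigma_2]$ to lie in $\Gamma_U'$; and since $g(V) = V_2$ is disjoint from $V$, the element $g$ displaces $\Gamma_V$ inside $\Gamma_U$.

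To verify the second hypothesis of Proposition~\ref{prop:bip2}, given any finitely generated $H < \Gamma_V$, all its elements have support in a common proper clopen $V_H \subsetneq V$. Extremal proximality applied inside $V$ produces pairwise disjoint clopens $W_0 = V_H, W_1, \ldots, W_k \subset V$, and the topological full group property yields an element $\eta \in \Gamma_V$ cyclically shifting $W_0 \to W_1 \to \cdots \to W_k \to W_0$, which clearly $k$-displaces $H$ inside $\Gamma_V$. Proposition~\ref{prop:bip2} then delivers $\Gamma_V' \subseteq \big(\presp{\Gamma_V'}g \cdot \presp{\Gamma_V'}g^{-1}\big)^4 \subseteq \Gamma_U'$, the last inclusion because $g \in \Gamma_U'$ is normal under conjugation by $\Gamma_V' \subseteq \Gamma_U$.

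The main obstacle is the remaining inclusion $\Gamma' \cap \Gamma_V \subseteq \Gamma_V'$ (the reverse being immediate); equivalently, the injectivity of the natural map $\Gamma_V^{\mathrm{ab}} \to \Gamma^{\mathrm{ab}}$. My plan here is a fragmentation argument: given $f = \prod_i [a_i, b_i] \in \Gamma' \cap \Gamma_V$, use the topological full group structure of $\G\Gamma = \Gamma$ to decompose each pair $(a_i, b_i)$ into parts supported respectively on $V$ and on $C \setminus V$, and then observe that the cumulative contribution from $C \setminus V$ must collectively cancel because $f$ is the identity there, leaving a commutator product internal to $\Gamma_V$. As a backup, one could apply Lemma~\ref{lem:fragmentation}(3) iteratively to each $[a_i, b_i]$ to push its non-$V$ portion onto an arbitrarily small clopen until the decomposition can be realised inside $\Gamma_V$.
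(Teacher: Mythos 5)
Your argument has a genuine gap, and it sits exactly where the content of the lemma lies. Your first step is a long detour to something that is immediate: $\Gamma_V\leq\Gamma_U$ already gives $\Gamma_V'\subseteq\Gamma_U'$ with no appeal to Proposition~\ref{prop:bip2}. The entire difficulty of the lemma is your second step, the inclusion $\Gamma'\cap\Gamma_V\subseteq\Gamma_V'$: an element of $\Gamma_V$ may be a product of commutators of elements of $\Gamma$ with enormous supports, and one must somehow localise those commutators. Your fragmentation argument does not do this. An arbitrary $a_i\in\Gamma$ need not preserve $V$ --- it can move points across the boundary of $V$ in both directions --- so there is no decomposition of $a_i$ into a piece supported on $V$ and a piece supported on $C\smallsetminus V$; the topological full group property lets you cut and paste along a partition only when the resulting map is still a bijection, which is exactly what fails here. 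And even granting such a decomposition, the claim that the contributions from $C\smallsetminus V$ ``collectively cancel'' because $f$ is trivial there is not an argument in a nonabelian group: the $V$-parts and complement-parts of the various $a_i,b_i$ interleave and do not commute past one another. The backup via Lemma~\ref{lem:fragmentation}(3) replaces $f$ by a product of a conjugate of some element with something supported in a clopen; it does not produce commutators of elements of $\Gamma_V$. Note also that $\Gamma'\cap\Gamma_V\subseteq\Gamma_V'$ is strictly stronger than what the lemma asserts, and the lemma is deliberately phrased with a strictly larger $U$ precisely because one cannot expect to stay inside $V$.

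The paper's proof avoids all of this with a single compression trick: choose $\alpha\in\Gamma$ with $\alpha(V)\supsetneq V$, set $U=V\cup\alpha(C\smallsetminus V)$, and use the homeomorphism $\psi\colon U\to C$ (identity on $V$, $\alpha^{-1}$ on $\alpha(C\smallsetminus V)$) to build an isomorphism $\Psi\colon\Gamma\to\Gamma_U$ that restricts to the identity on $\Gamma_V$. Then for $f\in\Gamma'\cap\Gamma_V$ one has $f=\Psi(f)\in\Psi(\Gamma')=\Gamma_U'$, and every commutator expression for $f$ in $\Gamma$ is transported wholesale into $\Gamma_U$. This is the idea your proposal is missing; without it, or some substitute for localising commutators, the proof does not go through.
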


\begin{proof}
Let $\alpha\in\Gamma$ be such that $\alpha(V)\supsetneq V$.  Let $U=V\cup \alpha(C\smallsetminus V)\subsetneq C$. Define $\psi\colon U\to C$  
\[
\psi(x)=
\begin{cases}
x&\mbox{if }x\in V,\\
\alpha^{-1}(x)&\mbox{if }x\in \alpha(C\smallsetminus V).\\
\end{cases}
\]
Then $\psi$ is a homeomorphism, which induces an isomorphism $\Psi\colon \Gamma\to\Gamma_U$ given by 
\[
\Psi(h)(x)=
\begin{cases}
x&\mbox{if }x\in C\smallsetminus U,\\
\psi^{-1}(h(\psi(x)))&\mbox{if }x\in U,\\
\end{cases}
\] for any $h\in\Gamma$ and $x\in C$. Since $\Psi$ is the identity on $\Gamma_V$, $\Psi(f)=f$, for any $f\in \Gamma_V$.  Therefore, if $f\in\Gamma'$, then $f\in \Gamma_U'$.
\end{proof}

\begin{lemma}\label{lem:bip-element}
Assume that $U\subsetneq V\subseteq C$ are clopens.  There exists
$h\in\Gamma_V'$ such that for all $k\in\B Z$, the sets $h^k(U)$ are pairwise
disjoint.
\end{lemma}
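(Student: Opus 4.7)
The plan is to construct $h\in\Gamma_V'$ with \emph{hyperbolic} dynamics on $V$: an attracting fixed point in a clopen $V_+\subseteq V$, a repelling fixed point in a clopen $V_-\subseteq V$, and a ``fundamental domain'' $F=V\setminus(V_+\cup V_-)$ containing $U$. The iterates $h^k(F)$ should then form a two-sided tower of pairwise disjoint clopen annuli (inside $V_+$ for $k\geq 1$, inside $V_-$ for $k\leq -1$), yielding disjointness of $\{h^k(U)\}_{k\in\B Z}$.

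A preliminary observation is that $\Gamma_V$ itself acts extremely proximally on $V$. Indeed, given nonempty proper clopens $A,B\subsetneq V$, extreme proximality of $\Gamma$ on $C$ produces $g\in\Gamma$ with $g(A)\subsetneq B$, and then the transposition $\tau_{g,A}$ (supported on $A\cup g(A)\subseteq V$) belongs to $\Gamma_V$ and witnesses $\tau_{g,A}(A)\subsetneq B$. Applying Lemma \ref{lem:fragmentation}(1) to $\Gamma_V$ acting on $V$, the commutator subgroup $\Gamma_V'$ also acts extremely proximally on $V$.

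Fix a partition $V=V_+\sqcup F\sqcup V_-$ with $U\subseteq F$ and all three pieces nonempty. Using extreme proximality of $\Gamma_V'$ together with the topological full-group property $\Gamma_V=\G{\Gamma_V}$ (inherited from $\Gamma=\G\Gamma$), I would paste together commutator-friendly pieces to produce $h\in\Gamma_V'$ that contracts $V_+\cup F$ into a proper clopen $W_+\subsetneq V_+$ and maps $V_-$ bijectively onto the complement $V\setminus W_+$. The commutator part is ensured by the three-cycle identity used in the proof of Lemma \ref{lem:fragmentation}(1), which exhibits transpositions associated to triples of pairwise disjoint clopens as commutators, so the pasted element can be arranged to sit in $\Gamma_V'$. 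Setting $W_k=h^k(V_+\cup F)$ then yields a strictly decreasing nested sequence of clopens in $V_+$, and the annuli $A_k=W_{k-1}\setminus W_k$ satisfy $h(A_k)=A_{k+1}$, are pairwise disjoint, and contain $F$ (hence $U$) at level $k=1$. Thus $h^k(U)\subseteq A_{k+1}\subseteq V_+$ for $k\geq 1$; symmetrically $h^k(U)\subseteq V_-$ for $k\leq -1$; and $U\subseteq F$. Pairwise disjointness of $V_+$, $F$, $V_-$ and of the annuli on each side closes the argument.

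The main obstacle I anticipate is the bookkeeping in the pasting step: the dynamical construction of a hyperbolic bijection is straightforward within $\Gamma_V$, but arranging the pasted pieces to be products of commutators---so that $h$ lands in $\Gamma_V'$ rather than merely in $\Gamma_V$---is more delicate and relies on the three-cycle identity from the proof of Lemma \ref{lem:fragmentation}(1).
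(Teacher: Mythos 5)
Your dynamical picture is essentially the right one---the element the paper constructs is likewise a ``translation-like'' homeomorphism whose powers push $U$ through a nested chain of clopens, and your reduction to pairwise disjoint annuli $A_k$ with $h(A_k)=A_{k+1}$ and $U\subseteq A_1$ is sound. The genuine gap sits exactly where you flag the ``main obstacle'': you never actually produce an element with these dynamics \emph{inside} $\Gamma_V'$, and the tool you propose for this step, the three-cycle identity from the proof of Lemma \ref{lem:fragmentation}(1), cannot do the job by itself. That identity exhibits elements of order three (products $\tau_{g_2,U}\tau_{g_1,U}$ cycling three pairwise disjoint clopens) as commutators of involutions; but any $h$ satisfying the lemma must have infinite order, since $h^n=\id$ would force $h^n(U)=h^0(U)=U$ and destroy pairwise disjointness. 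So $h$ is not such a three-cycle, and the assertion that an infinite-order contracting homeomorphism can be assembled as a product of these order-three commutators is precisely the nontrivial claim you leave unproved. (Your preliminary steps---extreme proximality of $\Gamma_V$ and then of $\Gamma_V'$ on $V$---are correct but do not close this gap either: they give you a rich supply of elements of $\Gamma_V'$, not one with the required north--south dynamics.)

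The paper closes the gap with a different and shorter device: \emph{reversibility}. It picks $W$ with $U\subsetneq W\subsetneq V$ and $\beta,\gamma\in\Gamma$ with $\beta(W)\subset V\smallsetminus W$ and $\gamma(W)\subset W\smallsetminus U$, and pastes $\alpha\in\Gamma_V$ acting as $\gamma^{-1}$ on $\gamma(W)$, as $\beta$ on $W\smallsetminus\gamma(W)$, and as $\presp{\beta}{\gamma}$ on $\beta(W)$. This $\alpha$ already has all $\alpha^k(U)$, $k\in\B Z$, pairwise disjoint, but a priori lies only in $\Gamma_V$. The point is that the construction is symmetric under the involution $\tau_{\beta,W}$ swapping $W$ and $\beta(W)$, so $\tau_{\beta,W}\alpha\tau_{\beta,W}^{-1}=\alpha^{-1}$ and hence $\alpha^2=[\alpha,\tau_{\beta,W}]$ is a single commutator in $\Gamma_V$; one takes $h=\alpha^2$, whose powers applied to $U$ form a subfamily of the $\alpha^k(U)$. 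If you replace your appeal to the three-cycle identity by this argument---arrange your pasted hyperbolic element to be conjugate to its own inverse by an involution of $\Gamma_V$ and square it---your proof goes through, and the extreme-proximality preliminaries become unnecessary.
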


\begin{proof}
Choose clopen $W$ such that $U\subsetneq W\subsetneq V$.
By extremal proximality, choose $\beta$ and $\gamma\in\Gamma$ such that
$\beta(W)\subset V\smallsetminus W$ and $\gamma(W)\subset W\smallsetminus U$.
Define $\alpha\in \Gamma_V$ by
\[
\alpha (x)=
\begin{cases}
x&\mbox{if }x\in C\smallsetminus (W\cup\beta(W)),\\
\gamma^{-1}(x)&\mbox{if }x\in\gamma(W),\\
\beta(x)&\mbox{if }x\in W\smallsetminus \gamma(W),\\
\presp\beta\gamma(x)&\mbox{if }x\in\beta(W).\\
\end{cases}
\]
Then the sets $\alpha^k(U)$ are pairwise disjoint. Indeed, it is sufficient to
prove that $\alpha^k(U)\cap U=\varnothing$, for all $k>0$.  Since $U\subset
W\smallsetminus\gamma(W)$, we have $\alpha(U)\subset \beta(W)$.  As $\alpha
\beta(W)\subset \beta(W)$, for $k\geq 1$, $\alpha^k(U)\subset\beta(W)$ which is
disjoint from $U$.

Since $\tau_{\beta,W}\in\Gamma_V$ conjugates $\alpha$ to $\alpha^{-1}$,
the element $h=\alpha^2=[\alpha,\tau_{\beta,W}]$ satisfies the claim.
\end{proof}

\begin{proof}[Proof of Theorem \ref{thm:C}]
Let $f$ be an element of\/ $\Gamma'$ and $A$ be a nontrivial conjugacy class of
$\Gamma'$.  By Lemmata \ref{lem:fragmentation}(3) and \ref{lem:commutant} we
have that $f=g_1^{-1}f_1$ for some $g_1\in A$ and $f_1\in\Gamma_{V_1}'$ for
some proper clopen ${V_1}\subsetneq C$.  


We claim that $f_1$ is a product of four $A$-commutators in $\Gamma'$.
Choose $V_1\subsetneq V_0\subsetneq C$ and $\omega\in V_0\smallsetminus V_1$.
We apply Proposition \ref{prop:bip2}. Namely, let $\Gamma_0$ denote the union of
groups $\Gamma_V$, such that $V$ is a clopen contained in
$V_0\smallsetminus\{\omega\}$. Clearly, $\Gamma_0$ is a proper subgroup of
$\Gamma_{V_0}$. By Lemma \ref{lem:fragmentation}(2), we may choose $g\in A$,
such that $g (V_0)\cap V_0=\varnothing$.  Thus, $g$ displaces $\Gamma_0$. Let $H$
be a finitely generated subgroup of\/ $\Gamma_0$. The union of supports of its
generators is a clopen $U$, properly contained in $V_0$, since $\omega\not\in
U$. Hence $H<\Gamma_U<\Gamma_0$.  Choose $U\subsetneq V\subsetneq V_0$, such
that $\omega\not\in V$. Let $h\in\Gamma'_V<\Gamma'_0$ be as in Lemma
\ref{lem:bip-element}.  Then $h$ $\infty$-displaces $H$. Thus Proposition
\ref{prop:bip2} applies and $f_1\in\Gamma_{V_1}'<\Gamma_0'$ is a product of four
$g$-commutators.

By Lemma \ref{lem:bip}, the commutator width of\/ $\Gamma_0'$ is at most two.  By
Lemma \ref{lem:fragmentation}(3), every element decomposes as a product of a
conjugate of a given nontrivial element from $\Gamma'$, say a commutator, and an
element conjugate into $\Gamma_0'$. Thus every element of\/ $\Gamma'$ is a
product of three commutators.
\end{proof}

\section{Groups almost acting on trees}\label{sec:Neretin}

In this section we apply Theorem \ref{thm:C} to groups almost acting on trees.

By a \textbf{graph} (whose elements are called \textbf{vertices}) we mean a set,
equipped with a symmetric relation called adjacency.  A
\textbf{path} is a sequence of vertices indexed either by a set $\{1,\dots,n\}$
or $\B N$ (in such a case we call the path a \textbf{ray}) such that
consecutive vertices are adjacent, and no vertices whose indices differ by two
coincide (i.e. there are no backtracks).  A graph is called a \textbf{tree} if is
connected (nonempty) and has no cycles, i.e. paths of positive length starting and
ending at the same vertex (in particular, the adjacency relation is irreflexive).

\textbf{Ends of\/ $T$} are classes of infinite rays in $T$.  Two rays
are equivalent if they coincide except for some finite (not necessarily of the
same cardinality) subsets.  The set of all ends of\/ $T$ is denoted by $\partial
T$, and is called the \textbf{boundary of\/ $T$}.

Given a pair of adjacent vertices (called an \textbf{oriented edge}) ${\vec e}=(v,w)$,
we call the set of terminal vertices of paths starting at ${\vec e}$
a \textbf{halftree} of\/ $T$ and we will denote it by $T_{\vec e}$.  The classes of
rays starting at ${\vec e}$ will be called the end of a halftree
$T_{\vec e}$ and will be denoted by $\partial T_{\vec e} \subset \partial T$.  By $-\vec e$ we denote
the pair $(w,v)$.

We endow $\partial T$ with a topology, where the basis of open sets consist of
ends of all halftrees.

A \textbf{valency} of a vertex $v$ is the cardinality of the set of vertices adjacent to $v$.
A vertex of valency one is called a \textbf{leaf}.  If every vertex has valency
at least three but finite, then the boundary $\partial T$ is easily seen to be compact,
totally disconnected, without isolated points,
and metrizable. Thus, $\partial T$ is a Cantor set.
In such a case, every end $\partial T_{\vec e}$\ \  of a halftree is a clopen (open and closed) subset of $\partial T$.

A \textbf{spheromorphism} is a class of permutations of\/ $T$ which preserve all
but finitely many adjacency (and nonadjacency) relations.  Two such maps are
equivalent if they differ on a finite set of vertices (see e.g. \cite[Section
3]{1502.00991}).  We denote the group of all spheromorphisms of\/ $T$ by
$\aaut(T)$.  If\/ $T$ is infinite, then the natural map $\aut(T)\to\aaut(T)$ is
an embedding.
Every sphereomorphism $f\in\aaut(T)$ induces a homeomorphism of
its boundary $\partial T$. 

For an integer $q>1$, by $T_q$ we denote the regular tree whose vertices have
degree $(q+1)$.  The group $\Nr_q$ was introduced by Neretin in \cite[4.5,
3.4]{MR1209033} as the group $\aaut(T_q)$ of sphereomorphisms of
$(q+1)$-regular tree $T_q$. It is abstractly simple \cite{MR1703086}.

In what follows, we will be interested in subgroups $\Gamma<\aut(T)$ acting
extremely proximally on the boundary $\partial T$ (see Theorem \ref{thm:parr}
and Corollary \ref{cor:free} below).  The whole
group of automorphisms $\Gamma=\aut(T_q)$ of\/ $T_q$ is such an example.  Another
example (cf. Example \ref{ex:cycki}) is the automorphism group $\Gamma=\aut(T_{s,t})$ of a bi-regular tree
$T_{s,t}$, $s,t>2$ (i.e. every vertex of\/ $T_{s,t}$ is black or white, every black
vertex is adjacent with $s$ white vertices, every white --- with $t$ black
vertices). We prove that the group $\G\Gamma$ of partial $\Gamma$-actions on
$\partial T$ is then \boundd-uniformly simple.

The group $\aut(T_{s,t})$ itself is virtually 8-uniformly simple \cite[Theorem
3.2]{MR3085032}. (Bounded simplicity in \cite{MR3085032} means uniform
simplicity in our context.)

There is a connection between the notion of a sphereomorphism
and topological full group acting on a boundary of a tree.

\begin{example}\  \label{ex:tree}
\begin{enumerate}
\item Any subdivision of $\partial T$ into clopens can be refined to $\C U_1$,
a subdivision into ends of halftrees (since any clopen in $\partial T$ is a
finite union of boundaries of halftrees).  Therefore the Neretin group $\Nr_q$ can
be characterized as $\Nr_q=\G{\aut(T_q)}=\aaut(T_q)$.

\item Another, well studied, example comes from considering 
$$
\aut_0(T_q)=\left\{
\begin{varwidth}{0.7\textwidth}
automorphisms of\/ $T_q$ preserving chosen cyclic orders on edges adjacent to any vertex of\/ $T_q$
\end{varwidth}
\right\}.
$$
One may induce cyclic orders by planar representation of\/ $T_q$.  The group
$\G {\aut_0(T_q)}$ is the Higman-Thompson group $\Th_{q,2}$ \cite[Section
5]{1502.00991}, \cite[2.2]{MR1703086}.

\item Those two examples can be generalized in the following manner (see
\cite[Section 3.2]{MR1839488}). Let $c\colon E(T_q)\to \{0,\ldots,q\}$ be a
function from the set $E(T_q)$ of (undirected) edges of $(q+1)$-regular tree $T_q$,
such that for every vertex $v$, the restriction of $c$ to the set of edges
$E(v)$ starting at $v$ gives a bijection with $\{0,\ldots,q\}$. We say that
such $c$ is a \textbf{proper colouring} of\/ $T_q$. Let $F<S_{q+1}$ be a
subgroup of permutations of $\{0,\ldots,q\}$. Using proper colouring $c$ and
$F$ we define the \textbf{universal group} $U(F)$ to be
$$
U(F) = \left\{g\in\aut(T_q) : c\circ g \circ c^{-1}_{|E(v)}\in F,
\text{ for every vertex }v\right\}.
$$
In fact $U(F)$ is independent (up to conjugation is $\aut(T_q)$) of the choice
of proper colouring $c$. We prove (see Corollary \ref{cor:ner}) that
$\G{U(F)}'$ is \boundd-uniformly simple, provided that $F$ is transitive on
$\{0,\ldots,q\}$. If $F$ is generated by a $(q+1)$-cycle, then
$U(F)=\aut_0(T_q)$, from (2). If $F=S_{q+1}$, then $U(F)=\aut(T_q)$.
\end{enumerate}
\end{example}

We call an action for a group $\Gamma$ on a tree $T$ \textbf{minimal} if there is no proper
$\Gamma$-invariant subtree of $T$.
Given a subset $A$ of a tree. We define its \textbf{convex hull} to be the set
of all vertices which lie on paths with both ends in the set $A$. It is a subtree.
The action is minimal if and only if the convex hull of any orbit is the whole tree.

\begin{example} \label{ex:finite}
Every action on a leafless tree with a finite quotient is minimal.
The converse is not true (see Example \ref{ex:cycki}).
\end{example}

Indeed, the distance from a $\Gamma$-orbit is a bounded function.  Hence the complement of
an orbit cannot contain an infinite ray.  Thus every vertex lies on a path with endpoints
in a given orbit.  

\begin{lemma}[{\cite[Lemma 4.1]{MR0299534}}]\label{lem:orbit}
Assume that a group $\Gamma$ acts minimally on a leafless tree $T$.
Then for every vertex $v$ and an edge $\vec e$ the orbit $\Gamma v$
intersects the halftree $T_{\vec e}$.
\end{lemma}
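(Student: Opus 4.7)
The plan is to argue by contraposition, invoking directly the characterization stated just before Example \ref{ex:finite}: an action on a tree is minimal if and only if the convex hull of every orbit is the whole tree.

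Suppose, for contradiction, that $\Gamma v \cap T_{\vec e} = \varnothing$. Writing $\vec e = (w,u)$, the two halftrees $T_{\vec e}$ and $T_{-\vec e}$ partition the vertex set of $T$, so $\Gamma v$ lies entirely in the complementary halftree $T_{-\vec e}$. The key observation is that $T_{-\vec e}$ is a convex subtree of $T$: the unique non-backtracking path in $T$ joining two vertices of $T_{-\vec e}$ must stay inside $T_{-\vec e}$, since any excursion out through $\vec e$ would have to return through $\vec e$, violating the no-backtrack condition imposed on paths.

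Consequently, the convex hull of $\Gamma v$ (taken in $T$) is contained in $T_{-\vec e}$, which is a proper subset of $T$ (it misses, for instance, the vertex $u$). This contradicts minimality, which, by the characterization above, forces the convex hull of any orbit to coincide with all of $T$. Hence $\Gamma v$ must meet $T_{\vec e}$.

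There is essentially no genuine obstacle in the argument: once the convex-hull characterization of minimality is available, the lemma reduces to the elementary remark that halftrees are convex. A self-contained variant, not relying on that characterization, would observe directly that the convex hull of any orbit is a nonempty $\Gamma$-invariant subtree of $T$, which must therefore equal $T$, and then derive the same contradiction with $\mathrm{Conv}(\Gamma v)\subseteq T_{-\vec e}$.
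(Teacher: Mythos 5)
Your argument is correct and is essentially the paper's own proof, which simply notes that if $\Gamma v$ were contained in $T_{-\vec e}$ then so would be its convex hull, contradicting the convex-hull characterization of minimality stated just before Example \ref{ex:finite}. You have merely spelled out the convexity of the halftree and the properness of $T_{-\vec e}$, which the paper leaves implicit.
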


\begin{proof}
If $\Gamma v$ is all contained in $T_{-\vec e}$, so is its convex hull.  Thus the claim.
\end{proof}

We call an action for a group $\Gamma$ on a tree $T$ \textbf{parabolic} if $\Gamma$
has a fixed point in $\partial T$.

An action of a group by homeomorphisms on a topological space is called
\textbf{minimal} if there is no proper nonempty closed invariant set (equivalently,
if every orbit is dense).  This notion should not cause confusion with the notion
of minimal actions on trees. (A tree is a set equipped with a relation as opposed
to its geometric realisation which is a topological space.)

\begin{theorem} \label{thm:parr}
Assume that $T$ is a leafless tree such that $\partial T$ is a Cantor set.
Let $\Gamma$ act on~$T$.  The following are equivalent.
\begin{enumerate}
\item The action of\/ $\Gamma$ on $\partial T$ is extremely proximal
(see the beginning of Section \ref{sec:Cantor} for the definitions).
\item The action of\/ $\G\Gamma$ on $\partial T$ is extremely proximal.
\item The action of\/ $\Gamma$ on $\partial T$ is minimal and $\partial T$
does not support any $\Gamma$-invariant probability measure.
\item The action of\/ $\Gamma$ on $T$ is minimal and not parabolic,
that is, there is no proper $\Gamma$-invariant subtree of $T$ and $\Gamma$
has no fixed point in $\partial T$.
\end{enumerate}
\end{theorem}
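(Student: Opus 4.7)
The plan is to prove the equivalences via the cycle $(1)\Rightarrow(2)\Rightarrow(3)\Rightarrow(4)\Rightarrow(1)$.

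The implication $(1)\Rightarrow(2)$ is immediate since $\Gamma\leq\G\Gamma$. For $(2)\Rightarrow(3)$, I would first note that $\Gamma$- and $\G\Gamma$-orbits on $\partial T$ coincide: any $g\in\G\Gamma$ agrees with some $\gamma\in\Gamma$ in a neighborhood of any $\xi\in\partial T$, so $g(\xi)=\gamma(\xi)\in\Gamma\xi$. Hence minimality of the $\G\Gamma$-action (which follows from extreme proximality by choosing $V_2$ to be a small clopen neighborhood of any prescribed point) transfers to $\Gamma$, and any $\Gamma$-invariant probability measure is automatically $\G\Gamma$-invariant via the piecewise decomposition of elements of $\G\Gamma$. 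If such a $\mu$ existed, minimality would give it full support; applying extreme proximality with $V_1=V_2=V$ for any proper clopen $V$ produces $g\in\G\Gamma$ with $g(V)\subsetneq V$, and the nonempty clopen $V\setminus g(V)$ would have both positive and zero $\mu$-measure, a contradiction.

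For $(3)\Rightarrow(4)$, a $\Gamma$-fixed end would be a closed invariant singleton, contradicting minimality on the Cantor set $\partial T$. If $T_0\subsetneq T$ is a nonempty $\Gamma$-invariant subtree, then either $T_0$ is infinite, in which case $\partial T_0$ is a nonempty proper closed $\Gamma$-invariant subset of $\partial T$ (proper because $T$ being leafless forces $T_0=T$ whenever $\partial T_0=\partial T$), contradicting minimality; or $T_0$ is finite, in which case $\Gamma$ stabilizes its center, so a subgroup $\Gamma_1\leq\Gamma$ of index at most two fixes a vertex $v_0$. Then $\Gamma_1\leq\stab(v_0)<\aut(T)$ lies in a profinite (hence compact) group, whose Haar measure pushes forward to a $\Gamma_1$-invariant probability measure on $\partial T$; averaging over $\Gamma/\Gamma_1$ yields a $\Gamma$-invariant one, contradicting $(3)$.

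The substantive step is $(4)\Rightarrow(1)$. Given proper nonempty clopens $V_1,V_2\subsetneq\partial T$, I would locate oriented edges $\vec e,\vec f$ such that $\partial T_{\vec e}\subseteq V_1$ and $\partial T_{-\vec f}\subseteq V_2^c$, shrinking if necessary so that $\partial T_{\vec e}\cap\partial T_{-\vec f}=\varnothing$ (each Cantor clopen contains disjoint pairs of halftree boundaries). It then suffices to construct a hyperbolic element $\gamma\in\Gamma$ with attracting end $\gamma^+\in\partial T_{\vec e}$ and repelling end $\gamma^-\in\partial T_{-\vec f}$; since $\gamma^-\notin V_2\subseteq\partial T_{\vec f}$, the iterates $\gamma^n(V_2)$ contract uniformly into any prescribed neighborhood of $\gamma^+$, giving $\gamma^n(V_2)\subsetneq\partial T_{\vec e}\subseteq V_1$ for large $n$.

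To construct such $\gamma$, I note first that the hypotheses of $(4)$ rule out $\Gamma$ fixing a vertex, inverting an edge, or fixing an end, so the classical Bass--Serre dichotomy yields a hyperbolic $\gamma_0\in\Gamma$ with axis $A_0$. Applying Lemma \ref{lem:orbit} to a vertex $v_0\in A_0$ and to a halftree $T_{\vec e'}$ chosen deep inside $T_{\vec e}$ produces $\eta\in\Gamma$ with $\eta v_0\in T_{\vec e'}$; the axis $\eta A_0$ then contains a vertex deep in $T_{\vec e}$, so at least one of its two ends lies in $\partial T_{\vec e}$. Replacing $\eta\gamma_0\eta^{-1}$ by its inverse if needed gives a hyperbolic $g_1\in\Gamma$ with $g_1^+\in\partial T_{\vec e}$; symmetrically, produce $g_2\in\Gamma$ with $g_2^-\in\partial T_{-\vec f}$. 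A standard ping-pong argument shows that for large $N$ the product $\gamma=g_1^N g_2^N$ is hyperbolic with $\gamma^+$ in an arbitrarily small neighborhood of $g_1^+$ and $\gamma^-$ near $g_2^-$; by openness of $\partial T_{\vec e}$ and $\partial T_{-\vec f}$ the fixed points of $\gamma$ lie where required. The main technical hurdle is arranging the four endpoints $g_1^\pm,g_2^\pm$ in sufficiently generic position for ping-pong (in particular ensuring $g_1^+\neq g_2^-$, which is secured by the disjointness $\partial T_{\vec e}\cap\partial T_{-\vec f}=\varnothing$, together with some additional care in the conjugation step to separate the remaining points).
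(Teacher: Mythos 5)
Your implications $(1)\Rightarrow(2)\Rightarrow(3)\Rightarrow(4)$ are correct, and in places take a different (but valid) route from the paper: for the non-existence of an invariant measure you use full support plus a strict self-inclusion $g(V)\subsetneq V$ where the paper splits $\partial T$ into three clopens and counts; for a finite invariant subtree you invoke compactness of vertex stabilisers and Haar measure where the paper averages explicit visual measures. (Both of your arguments silently use that $T$ is locally finite; this does follow from compactness of $\partial T$, but deserves a sentence.)

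The genuine gap is in $(4)\Rightarrow(1)$, in the ping-pong step. You claim that for large $N$ the product $\gamma=g_1^Ng_2^N$ is hyperbolic with $\gamma^+$ near $g_1^+$ and $\gamma^-$ near $g_2^-$, and you only secure the condition $g_1^+\neq g_2^-$. That condition is not sufficient. For the forward dynamics you need $g_2^+\neq g_1^-$ as well: $g_2^N$ pushes the complement of a neighbourhood of $g_2^-$ into a small neighbourhood of $g_2^+$, and $g_1^N$ then contracts that neighbourhood toward $g_1^+$ only if it stays away from $g_1^-$. If $g_1^-=g_2^+$ the product can degenerate outright: take $g_1$ and $g_2$ whose axes share the ray toward the common end $g_1^-=g_2^+$, translating in opposite directions along the overlap with equal translation lengths; then $g_1^Ng_2^N$ is elliptic for every $N$, while $g_1^+\neq g_2^-$ holds. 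Since $g_1$ and $g_2$ are both conjugates of the same $\gamma_0^{\pm1}$, this configuration can actually occur in your construction. The phrase ``some additional care in the conjugation step to separate the remaining points'' is precisely where the work lies, and it is not routine: you must move $g_1^-$ off $g_2^+$ while keeping $g_1^+$ inside $\partial T_{\vec e}$ and $g_2^-$ inside $\partial T_{-\vec f}$, and the natural conjugations (by powers of $g_1$ or $g_2$) drag the endpoints you are trying to pin down. Until this is done, the existence of a hyperbolic element with attracting end in $\partial T_{\vec e}$ and repelling end in $\partial T_{-\vec f}$ --- the heart of the implication --- is not established.

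For comparison, the paper avoids ping-pong entirely: using Lemma \ref{lem:orbit} it first produces $\gamma$ with $T_{\gamma\vec e}\subsetneq T_{\vec e}$, extracts from the $\gamma$-iterates a biinfinite path with forward end $\omega$, picks $\eta$ with $\eta(\omega)\neq\omega$ (available since the action is not parabolic), and shows $T_{-\gamma^{-k}\eta\gamma^k\vec e}\subsetneq T_{\vec e}$ for large $k$. This nesting argument delivers the required orientation reversal with a single auxiliary element and no genericity conditions on four boundary points; you may want to either adopt it or complete your ping-pong setup along these lines.
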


\begin{proof}
($1\Rightarrow 2$)  This is straightforward.

($2\Rightarrow 3$)
Let $F$ be a closed, nonempty, proper and $\Gamma$-invariant subset of $\partial T$.
Choose $x\in F$ and a proper clopen $V\subset \partial T$ containing $x$.
Define $U=\partial T\smallsetminus F$. Then, there is no $g\in\G\Gamma$ such
that $g(V)\subset U$, since $g(x)=\gamma (x)\in F$, for some $\gamma\in\Gamma$;
thus contradiction.

Similarly, let $\mu$ be a $\Gamma$-invariant measure on $\partial T$.
Decompose $\partial T=U_1\cup U_2\cup U_3$, where $U_i$'s are disjoint nonempty clopens.
We may assume that $\mu(U_1)<\nicefrac12$.  Then, there is no
$g\in\G\Gamma$ such that $g(U_2\cup U_3)\subset U_1$.  Indeed, for any $g\in\G\Gamma$
we may decompose $U_2\cup U_3$ (by compactness) as finite disjoint union of clopens $U_2\cup U_3=\bigcup_{i=1}^k V_i$
such that $g|_{V_i}=\gamma_i|_{V_i}$ for some $\gamma_i\in\Gamma$ and then
$$
\nicefrac12<\mu(U_2\cup U_3)=\sum_{i=1}^k\mu(V_i)=\sum_{i=1}^k\mu(\gamma_iV_i)<\mu(U_1)<\nicefrac12
$$
is a contradiction.  Hence, the action is not extremely proximal.

($3\Rightarrow 4$)  If there is an infinite $\Gamma$-invariant subtree $T'$ of $T$ or a fixed point $\omega\in\partial T$,
then either $\partial T'$ or $\{\omega\}$ is a $\Gamma$-invariant closed subset of $\partial T$.

Suppose that there exists a finite $\Gamma$-invariant subtree $T'$ of $T$. We
use the following definition. Given a vertex $v$ of $T$, we define the \textbf{visual measure}
associated to $v$ to be the unique measure $\mu_v$ on $\partial T$ with the following property: if
$\{v_i\}_{i=0}^n$ is any injective path starting at $v_0=v$, then
$$
\mu_v\left(\partial T_{(v_{n-1},v_n)}\right)=\frac{1}{d_0\prod_{i=1}^{n-1}(d_i-1)},
$$
where $d_i$ is the valence of $v_i$.  The visual metric $\mu_v$ is
obviously invariant under the action of the stabiliser $\stab(v)$ of $v$ in $\aut(T)$.

We can consider the average of the visual measures
associated to the vertices of this subtree $T'$. It will be a $\Gamma$-invariant measure on $\partial T$.

($4\Rightarrow 1$) By Lemma \ref{lem:orbit} we may assume that, for every pair
of edges $\vec e$ and $\vec f$, there is $\gamma\in\Gamma$ such that either
$T_{\gamma\vec e}$ or $T_{-\gamma\vec e}$ is strictly contained in $T_{\vec f}$.
It is enough to show that one can find $\gamma\in\Gamma$, such
that the later holds, i.e. $\partial T_{-\gamma(\vec e)}\subsetneq \partial T_{\vec f}$ (indeed, since ends of halftrees constitute a basis,
we can find edges $\vec e$ and $\vec f$ such that $\partial T_{\vec e}\subset U$
and $\partial T_{\vec f}\subset C\smallsetminus V$, for nonempty proper clopens $V$ and $U$ in $\partial T$; if there is $\gamma\in\Gamma$ such that $\partial T_{-\gamma(\vec e)}\subsetneq \partial T_{\vec f}$ then $\gamma V\subseteq\partial T_{-\gamma(\vec e)}\subsetneq\partial T_{\vec f}\subseteq U$).

It is enough to prove this claim for $\vec e=\vec f$.
Indeed, if there exists $\gamma_1\in\Gamma$ such that $T_{\gamma_1\vec e}\subsetneq T_{\vec f}$ 
and $T_{-\gamma_2\vec e}\subsetneq T_{\vec e}$, then
$T_{-\gamma_1\gamma_2\vec e}\subsetneq T_{\gamma_1\vec e}\subsetneq T_{\vec f}$.

Assume that there exists $\gamma\in\Gamma$ such that $T_{\gamma\vec e}\subsetneq T_{\vec e}$.
Let $\{v_i\}_{i=0}^n$ be a path such that $\vec e=(v_0,v_1)$ and $\gamma\vec e=(v_{n-1},v_n)$.  Then
$\{v_i\}_{i\in\B Z}$, defined as $v_{nq+r}=\gamma^qv_r$, is a biinfinite path.  Let $\omega$ be its
end as $i\to\infty$.  Choose $\eta\in\Gamma$ such that $\eta(\omega)\neq\omega$. Consider the biinfinite
path from $\omega$ to $\eta(\omega)$. It coincides with $\{v_i\}_{i<i_-}$ and $\{\eta v_{-i}\}_{i>i_+}$
for some $i_\pm\in\B Z$.  Therefore $T_{-\eta\gamma^k\vec e}\subsetneq T_{\gamma^k\vec e}$, for $k$
big enough. Hence, $T_{-\gamma^{-k}\eta\gamma^k\vec e}\subsetneq T_{\vec e}$.  Thus the claim.
\end{proof}

\begin{remark}
Only clause (3) from \ref{thm:parr} concerns an action of a group on a tree.
The other parts of \ref{thm:parr} are about actions on a Cantor set. We do not
know if there is a straight argument for proving equivalence of $(1)$ and $(4)$
from Theorem \ref{thm:parr}, without referring to actions on trees.
\end{remark}

Below is an application of Theorems \ref{thm:C} and \ref{thm:parr} to the Neretin groups and the Higman-Thompson groups.

\begin{corollary} \label{cor:ner}\
\begin{enumerate}
\item Suppose $F<S_{q+1}$ is a transitive permutation subgroup and let  $c$ be a
proper colouring of\/ $T_q$ (see Example \ref{ex:tree}(3)).  Then $U(F)$ acts
transitively on the directed edges of\/  $T_q$, thus $\G {U(F)}'$ is
\boundd-uniformly simple.
\item Fix natural numbers $q>r\geq 1$.  The commutator subgroup $\Nr'_q$ of the Neretin group $\Nr_q$, and the Higman-Thompson group $\Th_{q,r}'$, 
are \boundd-uniformly simple and have commutator width bounded by three.
\end{enumerate}
\end{corollary}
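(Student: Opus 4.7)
The plan is to reduce both parts to Theorem \ref{thm:C} by verifying, via clause (4) of Theorem \ref{thm:parr}, that the relevant actions on $T_q$ are minimal and non-parabolic. Since $T_q$ is a regular tree of valency $q+1\geq 3$, its boundary is automatically a Cantor set, so that hypothesis is free.

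For (1) I would first establish that $U(F)$ acts transitively on the set of directed edges of $T_q$. Given two directed edges $\vec e_1$ and $\vec e_2$, one constructs an element of $U(F)$ carrying $\vec e_1$ to $\vec e_2$ by induction on the distance from an expanding family of balls: at each vertex encountered, transitivity of $F$ on $\{0,\ldots,q\}$ lets us pick an element of $F$ inducing the required bijection between colour labels of the outgoing edges, and these local choices glue to a global automorphism because the colouring condition at distinct vertices is independent. Two consequences of directed-edge transitivity are then immediate. First, the induced action on vertices is transitive (read off the tail of each directed edge), so the convex hull of any orbit is all of $T_q$; in particular the action is minimal. Second, for every edge $\vec e$ there is an element of $U(F)$ mapping $\vec e$ to $-\vec e$, which rules out any fixed end $\omega\in\partial T_q$, since a fixed end would consistently orient every edge towards $\omega$ and no reversal could exist. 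Thus condition (4) of Theorem \ref{thm:parr} holds, so $\G{U(F)}$ acts extremely proximally on $\partial T_q$, and Theorem \ref{thm:C} gives that $\G{U(F)}'$ is \boundd-uniformly simple with commutator width at most three.

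For (2) the Neretin case is a direct specialisation: by Example \ref{ex:tree}(1) we have $\Nr_q=\aaut(T_q)=\G{\aut(T_q)}$, and $\aut(T_q)=U(S_{q+1})$ with $S_{q+1}$ transitive on $\{0,\ldots,q\}$, so (1) yields the claim for $\Nr'_q$. For the Higman--Thompson case with $r=2$, Example \ref{ex:tree}(2) gives $\Th_{q,2}=\G{\aut_0(T_q)}=\G{U(C_{q+1})}$ with $C_{q+1}=\langle(0\,1\,\ldots\,q)\rangle$ transitive, and (1) again applies. For general $r$ one realises $\Th_{q,r}$ as $\G\Gamma$ for a group $\Gamma$ acting on a leafless tree $T$ with Cantor boundary obtained by attaching $r$ rooted $q$-ary half-trees at a central vertex (so that $\partial T$ is the disjoint union of $r$ copies of the $q$-ary Cantor set that $\Th_{q,r}$ acts on); the verification of minimality and non-parabolicity on $T$ then follows from the same edge-transitivity argument as in (1) applied within each half-tree, together with a single reversal at the central vertex to forbid a fixed end.

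The main obstacle I expect is part (1), specifically the inductive construction of an element of $U(F)$ sending one prescribed directed edge to another. One has to be careful that the local permutations chosen at successive vertices are compatible with the global colouring $c$ and do not conflict with earlier choices; the key observation making this work is that the colouring $c$ restricted to $E(v)$ is a bijection onto $\{0,\ldots,q\}$ independently at each vertex, so the inductive extension is never obstructed. Once directed-edge transitivity is in hand, Theorems \ref{thm:parr} and \ref{thm:C} take over with no further input.
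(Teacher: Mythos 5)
Your treatment of part (1), and of the cases $\Nr_q=\G{\aut(T_q)}=\G{U(S_{q+1})}$ and $\Th_{q,2}=\G{\aut_0(T_q)}$ in part (2), is correct and follows the paper's route: verify condition (4) of Theorem \ref{thm:parr} for $U(F)$ and feed the conclusion into Theorem \ref{thm:C}. (The paper checks non-parabolicity slightly differently --- $\stab(v)$ fixes no edge adjacent to $v$, hence no ray from $v$ --- but your edge-reversal argument is equally valid, and your inductive proof of arc-transitivity is a reasonable expansion of what the paper leaves implicit.)

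The genuine gap is your reduction of $\Th_{q,r}$ for general $r$. The tree $T$ you build --- $r$ rooted $q$-ary half-trees glued to a central vertex $v_0$ --- cannot satisfy condition (4) of Theorem \ref{thm:parr} for \emph{any} $\Gamma\leq\aut(T)$: since $q>r\geq 1$, the vertex $v_0$ is the unique vertex of valency $r\neq q+1$ (for $r=1$ it is even a leaf, so $T$ is not leafless), hence it is fixed by every automorphism of $T$, $\{v_0\}$ is a proper invariant subtree, and the action is never minimal; in particular the ``single reversal at the central vertex'' you invoke does not exist. Equivalently, by the implication $(4\Rightarrow 3)$ of Theorem \ref{thm:parr}, the visual measure at $v_0$ is an invariant probability measure, so no such $\Gamma$ is extremely proximal on $\partial T$. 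Worse, the identification $\Th_{q,r}=\G{\Gamma}$ fails for any such rooted picture: every $\gamma\in\Gamma$ fixes $v_0$ and hence preserves distance to $v_0$, so every element of $\G{\Gamma}$ locally carries depth-$n$ cylinders to depth-$n$ cylinders, whereas $\Th_{q,r}$ contains prefix substitutions that change depth. This is precisely why the paper stays inside the unrooted tree: it realises $\Th_{q,r}$ as the pointwise stabiliser $\Gamma_{\C F}$, in $\G{\aut_0(T_q)}$, of a family $\C F$ of $q-r+1$ pairwise disjoint halftree-boundaries (quoting this identification from the literature), and then applies Theorem \ref{thm:C} directly to the complementary Cantor set $C=\partial T_q\smallsetminus\bigcup_{i}\partial T_{\vec e_i}$, on which $\Gamma_{\C F}$ is its own topological full group acting extremely proximally --- bypassing Theorem \ref{thm:parr} for this case. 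You need some such unrooted realisation, or a direct verification of the hypotheses of Theorem \ref{thm:C} on the Cantor set itself, to handle $r\neq 2$.
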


\begin{proof}
Let $\Gamma=U(F)$.  Then the action of $\Gamma$ on $T_{q}$ 
is not parabolic as there is no $\stab(v)$-fixed
edge adjacent to $v$, hence no $\stab(v)$-fixed ray.
It is minimal since the action is transitive.

Therefore, in case of the Neretin group $\Nr'_q$ and the Higman-Thompson group $\Th_{q,2}'$,
Theorem \ref{thm:C} applies immediately due to Theorem \ref{thm:parr}.

Suppose $\C F$ is a family of pairwise disjoint ends of halftrees
$\partial T_{\vec e_i}\subset\partial T_q$, for $0\leq i\leq q-r$.
If $\Gamma_{\C F}$ is a pointwise stabiliser of $\C F$ in
$\G {\aut_0(T_q)}$ (see Example \ref{ex:tree}(2)), then $\Gamma_{\C F}$
is isomorphic to $\Th_{q,r}$ \cite[Section 5]{1502.00991}. Moreover,
$\Gamma_{\C F}$ is its own topological full group acting extremely proximally
on $C=\partial T_q\smallsetminus\bigcup_{i=0}^{q-r} \partial T_{\vec e_i}$.
Hence we get the conclusion for $\Th_{q,r}'$.
\end{proof}

\begin{corollary} \label{cor:free}
Suppose $\Gamma={\mathbf F_n}$ is a free group of rank $n\geq 2$.  Then $\Gamma$ acts
on its Cayley graph, which is $T_{2n-1}$.  This action is transitive and clearly
not parabolic.  Thus the induced action on the boundary is extremely proximal.
Therefore ${\G {{\mathbf F_n}}}'$ is \boundd-uniformly simple
by Theorem \ref{thm:C}.
\end{corollary}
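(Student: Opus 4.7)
The plan is to verify the four hypotheses of Theorem \ref{thm:parr}(4) for the standard action of $\Gamma=\mathbf{F}_n$ on its Cayley graph, and then quote Theorem \ref{thm:C}. First I would identify the Cayley graph. Picking a free basis $\{a_1,\dots,a_n\}$, every vertex has $n$ outgoing edges (labeled $a_i$) and $n$ incoming edges (labeled $a_i^{-1}$), so each vertex has valency $2n$ and there are no cycles; this is $T_q$ with $q = 2n-1$. Since $n \geq 2$, all valencies are at least $4$ (hence finite and $\geq 3$), so $\partial T$ is a Cantor set and the hypothesis of Theorem \ref{thm:parr} applies.

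Next I would check minimality and non-parabolicity. Minimality is immediate: $\Gamma$ acts on itself by left multiplication, hence transitively on the vertex set, so any nonempty $\Gamma$-invariant subtree contains a full orbit of vertices and therefore equals $T$. For non-parabolicity, I would argue that no end of $T$ is fixed by all of $\Gamma$. Each free generator $a_i$ acts hyperbolically on its Cayley graph with translation axis the biinfinite path $\dots a_i^{-1}a_i^{-1}\cdot a_ia_i\dots$, and its only fixed points on $\partial T$ are the two ends of this axis. For $n \geq 2$ the fixed-point pairs of $a_1$ and $a_2$ are disjoint, so no $\omega \in \partial T$ is simultaneously fixed by both generators, and therefore none is fixed by $\Gamma$.

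With both conditions of Theorem \ref{thm:parr}(4) verified, the implication $(4)\Rightarrow(1)$ of that theorem yields that the action of $\Gamma$ on $\partial T$ is extremely proximal. Since $\G\Gamma$ is by construction its own topological full group and still acts extremely proximally on the Cantor set $\partial T$ (this is the implication $(1)\Rightarrow(2)$ of Theorem \ref{thm:parr}), the hypotheses of Theorem \ref{thm:C} are met for $\G\Gamma$. Theorem \ref{thm:C} then gives that $\G{\mathbf{F}_n}'$ is \boundd-uniformly simple with commutator width at most three. The only mildly nontrivial step is the non-parabolicity, but even that reduces to a one-line observation about fixed ends of distinct free generators, so the corollary really is a direct assemblage of the earlier machinery.
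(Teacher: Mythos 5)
Your proposal is correct and follows exactly the route the paper intends: identify the Cayley graph as $T_{2n-1}$, note the action is minimal (by transitivity) and not parabolic, invoke Theorem \ref{thm:parr} ($4\Rightarrow1$, then $1\Rightarrow2$ for the topological full group), and conclude via Theorem \ref{thm:C}. The paper compresses the non-parabolicity check to the word ``clearly,'' whereas you supply the standard justification via the disjoint fixed-end pairs of two distinct free generators; this is a faithful filling-in of the same argument, not a different one.
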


\begin{example}[{\cite[Section 5]{MR0299534}, \cite[p. 232]{MR3085032}}]\label{ex:cycki}
We apply our results to trees constructed by Tits. Any connected graph $(G,E)$ of finite valence, with at least one edge, can appear as a quotient of a (finite valence) tree.

Assume that $c$ is a function from oriented edges of $G$ into the set of positive
integers.  By a result of Tits, there is a tree $T$ and a group $\Gamma$ acting on $T$
such that $G=\Gamma\backslash T$ and, for any $v$, and $w\in T$ such that
$(\Gamma v,\Gamma w)$ is an edge in $G$, there are exactly $c(\Gamma v,\Gamma w)$
vertices in $\Gamma w$ adjacent to $v$ (or none if it is not an edge of $G$).

If $c$ is such that the sum over edges starting at a given vertex is at least three (but finite), then 
the boundary of $T$ is a Cantor set.

If values of $c$ are at least two, the group action of $\Gamma$ on $T$ is
minimal and not parabolic \cite[5.7]{MR0299534},
i.e. the action of $\Gamma$ on $\partial T$ is extremely proximal due to Theorem \ref{thm:parr},
and $\G {\Gamma}$ is \boundd-uniformly simple due to Theorem \ref{thm:C}.
\end{example}

\begin{corollary} \label{cor:nir}
The groups of quasi-isometries and almost-isometries of a regular tree $T_q$ are five-uniformly simple.
\end{corollary}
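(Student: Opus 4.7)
The plan is to realize $QI(T_q)$ and $AI(T_q)$ as topological full groups acting extremely proximally on the Cantor set $\partial T_q$, which via Theorem~\ref{thm:C} gives \boundd-uniform simplicity, and then to sharpen the final counting to the claimed bound of five.

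First I would identify both groups as subgroups of $\homeo(\partial T_q)$. Every quasi-isometry (respectively almost-isometry) $T_q\to T_q$ extends to a homeomorphism of the boundary, yielding injective maps $QI(T_q),AI(T_q)\hookrightarrow\homeo(\partial T_q)$; conversely, any homeomorphism of $\partial T_q$ that is locally the boundary of a QI (respectively AI) can be assembled globally by a piecewise tree construction on halftrees, so that both groups equal their own topological full group. Extremal proximality of the action on $\partial T_q$ is then immediate: given proper clopens $V_1,V_2\subsetneq\partial T_q$, refine $V_2$ into a finite disjoint union of halftree boundaries $\partial T_{\vec e_i}$ and pick $\vec f$ with $\partial T_{\vec f}\subsetneq V_1$; a piecewise tree isomorphism sending each $\vec e_i$ into $\vec f$ is visibly an almost-isometry (only additive distortion arises near the finitely many pasting vertices), hence also a quasi-isometry. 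Combined with the perfectness of these groups, Theorem~\ref{thm:C} gives \boundd-uniform simplicity as a first approximation.

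The crux is the improvement from nine to five, which the plan is to obtain by refining the counting inside the proof of Theorem~\ref{thm:C}. Given a nontrivial $f\in\Gamma$ and a nontrivial conjugacy class $A$, Lemma~\ref{lem:fragmentation}(3) produces a decomposition $f=g_1^{-1}f_1$ with $g_1\in A$ and $f_1$ supported on a proper clopen $V_1\subsetneq\partial T_q$. It therefore suffices to express $f_1$ as a product of four elements of $A^{\pm1}$, i.e.\ as a product of two $h$-commutators for some $h\in A$. The plan is to show that in $QI(T_q)$ and $AI(T_q)$ any supported element $f_1$ is a single commutator $[\alpha,\beta]$ for which one can find $h\in A$ with $\alpha$ and $\presp{h}\beta$ commuting: then Lemma~\ref{lem:commutators} directly writes $[\alpha,\beta]$ as two $h$-commutators and, together with $g_1$, we reach exactly $1+4=5$ elements from $A^{\pm1}$.

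The main obstacle is establishing this refined commutator factorisation for supported elements. The approach I would take is to exploit the self-similarity of $T_q$: partition $V_1$ into three pairwise disjoint sub-clopens $U_1,U_2,U_3$ with $\supp(f_1)\subseteq U_1$, and use the abundance of piecewise-tree-isomorphic building blocks in $QI(T_q)$ (respectively $AI(T_q)$) to construct $\alpha$ supported on $U_1\cup U_2$ and $\beta$ supported on $U_2\cup U_3$ so that $[\alpha,\beta]=f_1$. By extremal proximality we can then choose a representative $h\in A$ that maps $\supp(\beta)$ entirely into a clopen disjoint from $\supp(\alpha)$, so that $\alpha$ and $\presp{h}\beta$ have disjoint supports and hence commute. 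The delicate point, which is where the specific geometry of $T_q$ and the QI/AI structure enters, is verifying that such $\alpha,\beta$ can be produced while staying inside $QI(T_q)$ (respectively $AI(T_q)$) rather than merely inside $\G{\aut(T_q)}=\Nr_q$; this boils down to checking that the pieces can be chosen to have uniformly bounded multiplicative (respectively additive) distortion, which is plausible given the transitivity of the tree automorphism group on halftrees.
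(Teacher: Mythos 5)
Your overall architecture coincides with the paper's: realize both groups as topological full subgroups of $\homeo(\partial T_q)$ acting extremely proximally (Theorem \ref{thm:C} then gives \boundd-uniform simplicity), and improve to five by writing $f=g_1^{-1}f_1$ with $g_1$ in the given class and $f_1$ fixing a clopen (Lemma \ref{lem:fragmentation}(3)), then expressing $f_1$ as a \emph{single} commutator $[\alpha,\beta]$ with $\supp\alpha\cup\supp\beta$ contained in a proper clopen, so that Lemma \ref{lem:fragmentation}(2) and Lemma \ref{lem:commutators} turn $[\alpha,\beta]$ into four conjugates of $g^{\pm1}$, for a total of $1+4=5$. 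The gap is precisely in the single-commutator step, which is the entire content of the improvement from nine to five. Your construction is finite: you partition $V_1$ into three clopens and ask for $\alpha$ supported on $U_1\cup U_2$ and $\beta$ on $U_2\cup U_3$ with $[\alpha,\beta]=f_1$. This cannot produce an arbitrary $f_1$ supported in $U_1$: if $\alpha$ carries $U_1$ onto some $W\subseteq U_2$ on which $\beta$ acts as the transported copy of $f_1$, then $[\alpha,\beta]$ restricts to $f_1$ on $U_1$ but also to a conjugate of $f_1^{-1}$ on $W$; any finite arrangement leaves such a residual term. This obstruction is exactly why the general machinery (Lemma \ref{lem:bip}) only yields a product of \emph{two} commutators, hence the bound $1+4+4=9$ and not $5$.

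The missing idea is the infinite swindle, which is what the paper's Lemma \ref{lem:nir-commutator} carries out: with $\supp f_1\subseteq \partial T_{\vec e}$, take a translation $t\in\aut(T_q)$ along a geodesic in $T_{-\vec e}$ and set $\alpha=\prod_{n\geq 0}\presp{t^n}{f_1^{-1}}$, so that $[t,\alpha]=f_1$ exactly, the residual terms being pushed off to infinity; a further bounded correction makes both commutands fix a common halftree. The point where the specific structure of $\mathrm{QI}(T_q)$ and $\mathrm{AI}(T_q)$ enters is that this infinite product stays in the group because $t$ is an honest isometry, so all the pieces $\presp{t^n}{f_1^{-1}}$ share the same constants --- the paper explicitly notes that such iteration arguments are \emph{not} available for $\Nr_q$ or $\Th_{q,r}$, which is why those groups only get the bound nine. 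You gesture at ``uniformly bounded distortion'' but never invoke the iteration that actually needs it. A secondary gap: you assert perfectness of the two groups without argument; this is Theorem \ref{thm:nir-perfect} in the paper and its proof again passes through the single-commutator lemma, and without it Theorem \ref{thm:C} only gives uniform simplicity of the commutator subgroup.
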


\begin{proof}
This follows from Lazarovich results from the appendix.
Let $\Gamma$ be one of those groups.  By Theorem \ref{thm:nir-perfect} $\Gamma=\Gamma'$.
Since $\aut(T_q)$ is a subgroup of $\Gamma$, it acts extremely proximally on $\partial T_q$
(see Lemma \ref{lem:faithfull}) as a topological full group (see Lemma \ref{lem:qi-is-full}).  This already proves \boundd-uniform simplicity.

Let $1\neq g$ and $f$ be two elements of $\Gamma$. By Lemma \ref{lem:fragmentation} there
exists $g_1$, a conjugate of $g$, such that $f_1=g_1^{-1}f$ fixes a clopen in $\partial T_q$.
By Lemma \ref{lem:nir-commutator}, $f_1$ is a commutator of two elements fixing an open
set in $\partial T_q$.  Thus, by Lemma \ref{lem:commutators}, $f_1$ is a product
of two $g$-commutators.
\end{proof}

\newcommand{\QI}{\mathrm{QI}}
\newcommand{\AI}{\mathrm{AI}}

\section{Appendix by Nir Lazarovich: Simplicity of $\AI(T_{q})$ and $\QI(T_{q})$} \label{sec:app}

We begin by recalling the following definitions.

For $\lambda\geq 1$, and $K\geq 0$, a \textbf{$(\lambda,K)$-quasi-isometry} between
two metric spaces $(X,d_X)$ and $(Y,d_Y)$ is a map $f\colon X\to Y$ such that for all
$x,x' \in X$, \[\lambda ^{-1}d_X(x,x')-K \leq d_Y(f(x),f(x'))\le\lambda
d_X(x,x')+K,\] and for all $y\in Y$ there exists $x\in X$ such that
$d_Y(y,f(x))\leq K$.

A \textbf{$K$-almost-isometry} is a $(1,K)$-quasi-isometry.

A map $f$ is a \textbf{quasi-isometry} (resp. \textbf{almost-isometry}) if
there exist $K$ and $\lambda$ (resp. $K$) for which it is a
$(\lambda,K)$-quasi-isometry (resp. $K$-almost-isometry).

Two quasi-isometries $f_1,f_2\colon X\to Y$ are \textbf{equivalent} if they are at
bounded distance (with respect to the supremum metric).

The group of all quasi-isometries (resp. almost-isometries) from a metric space
$X$ to itself, up to equivalence, is denoted by $\QI(X)$ (resp. $\AI(X)$).  Thus,
for $q\geq 2$, we have the following containments:
$$
\aut(T_q)\subset N_q \subset \AI(T_q)\subset \QI(T_q)\subset \homeo(\partial T_q).
$$

Where the last containment follows from the following lemma.
\begin{lemma}\label{lem:faithfull}
The group $\QI(T_q)$ acts faithfully on $\partial T_q$. 	
\end{lemma}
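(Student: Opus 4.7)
The plan is to show that every $(\lambda,K)$-quasi-isometry $f\colon T_q\to T_q$ whose equivalence class induces the identity on $\partial T_q$ is at bounded distance from $\id_{T_q}$, hence represents the trivial element of $\QI(T_q)$. Fix such a representative $f$ and let $D=D(\lambda,K)$ be the Morse constant in the $0$-hyperbolic space $T_q$: by construction, every bi-infinite $(\lambda,K)$-quasi-geodesic lies within Hausdorff distance $D$ of the unique geodesic joining its two boundary endpoints. The goal then reduces to showing $d(f(v),v)\le D$ for every vertex $v\in T_q$.

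Given $v$, I would use the fact that $T_q$ has valency $q+1\ge 3$ everywhere to pick three pairwise distinct ends $\xi_1,\xi_2,\xi_3\in\partial T_q$ whose median is $v$: the three bi-infinite geodesics $[\xi_i,\xi_j]$, for $1\le i<j\le 3$, all pass through $v$ and meet there in a tripod. Applying $f$ to any such geodesic yields a bi-infinite $(\lambda,K)$-quasi-geodesic whose two ends in $\partial T_q$ are $\xi_i$ and $\xi_j$ themselves, since the boundary action of $f$ is trivial. By the Morse lemma, the image lies inside the $D$-neighbourhood of $[\xi_i,\xi_j]$; so in particular $f(v)$ is within distance $D$ of each of the three geodesics $[\xi_i,\xi_j]$.

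A short tree-combinatorial argument closes the gap. Let $Y=[\xi_1,\xi_2]\cup[\xi_1,\xi_3]\cup[\xi_2,\xi_3]$ be the tripod, and let $y$ denote the nearest-point projection of $x:=f(v)$ onto the subtree $Y$. If $y=v$, then $d(x,v)=d(x,Y)\le d(x,[\xi_1,\xi_2])\le D$. Otherwise $y$ lies strictly inside exactly one of the three rays $[v,\xi_k)$; but then, for the opposite geodesic $[\xi_i,\xi_j]$ with $\{i,j,k\}=\{1,2,3\}$, the unique geodesic from $x$ to any point of $[\xi_i,\xi_j]$ passes through $y$ and then through $v$, so $d(x,[\xi_i,\xi_j])=d(x,y)+d(y,v)=d(x,v)\le D$. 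Either way $d(f(v),v)\le D$, and since $D$ is independent of $v$, $f$ is equivalent to $\id_{T_q}$ in $\QI(T_q)$.

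I do not expect any real obstacle here: the Morse lemma in a $0$-hyperbolic space is elementary, and the tripod argument is essentially a one-line computation in trees. The only routine point to verify is that the two ends of the quasi-geodesic $f\circ\gamma$, for a bi-infinite geodesic $\gamma$ joining $\xi$ to $\xi'$, are indeed $f(\xi)$ and $f(\xi')$ under the induced boundary action --- which is built into the very definition of the map $\QI(T_q)\to\homeo(\partial T_q)$.
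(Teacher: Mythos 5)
Your proof is correct and follows essentially the same route as the paper: both arguments realize each vertex $v$ as the median of three boundary points and invoke the stability of quasi-geodesics (Morse lemma) to conclude that $f(v)$ stays uniformly close to $v$ when the boundary action is trivial. The only difference is that you spell out the final tripod/nearest-point-projection computation that the paper leaves implicit.
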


\begin{proof}
Let $g\in\QI(T_q)$ be a quasi-isometry. Let $v\in T_q$, and let
$x_1$, $x_2$, $x_3\in\partial T_q$ be three distinct points such that $v$ is the
median of $x_1$, $x_2$, $x_3$, that is, $v$ is the unique intersection of all three
(biinfinite) geodesics $x_1x_2$, $x_1x_3$, $x_2x_3$.  Then, by the stability of
quasi-geodesics in Gromov hyperbolic spaces \cite[Theorem 1.7]{MR1744486}, $gv$
is at bounded distance (which does not depend on the vertex $v$) from the
midpoint of $gx_1$, $gx_2$, $gx_3$.  This implies that if $g$ induces the identity
map at the boundary, then $g\sim\id$.
\end{proof}

In fact, the proof above is valid whenever the space $X$ is a proper geodesic
Gromov hyperbolic space $X$ which has a Gromov boundary of cardinality at least
three whose convex hull is at bounded distance from $X$ (e.g. any
non-elementary hyperbolic group).

For what follows, let $\Gamma$ be the group $\QI(T_q)$ or $\AI(T_q)$ for $q\geq 2$.

\begin{lemma}\label{lem:qi-is-full}
The group $\Gamma<\homeo(\partial T_q)$ is a topological full group.
\end{lemma}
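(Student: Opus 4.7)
The plan is to show $\G\Gamma\subseteq\Gamma$ (the reverse inclusion is tautological) by constructing, from a given $g\in\G\Gamma\subset\homeo(\partial T_q)$, a genuine quasi-isometry or almost-isometry $f$ of $T_q$ that induces $g$ on the boundary; faithfulness of the boundary action (Lemma \ref{lem:faithfull}) then identifies $g$ with $f$ in $\Gamma$.

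First I would exploit the local definition of $\G\Gamma$ together with compactness of $\partial T_q$. For each $x\in\partial T_q$ there is a clopen neighbourhood $U_x$ and an element $\gamma_x\in\Gamma$ with $g|_{U_x}=\gamma_x|_{U_x}$, and since boundaries of halftrees form a clopen basis one may take $U_x=\partial T_{\vec e_x}$. A finite subcover can be refined and disjointified into a partition $\partial T_q=\bigsqcup_{i=1}^n V_i$ with $V_i=\partial T_{\vec e_i}$ and $g|_{V_i}=\gamma_i|_{V_i}$. This partition corresponds to a finite subtree $S\subset T_q$ whose removal decomposes $T_q$ as $S\sqcup\bigsqcup_i T_{\vec e_i}$. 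Applying the same procedure to the clopen partition $\{g(V_i)\}$ yields a second finite subtree $S'$ and halftrees $T_{\vec f_i}$ with $\partial T_{\vec f_i}=g(V_i)$.

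Next, fix a representative $\hat\gamma_i\in\Gamma$ for each $i$. Its boundary action sends $\partial T_{\vec e_i}$ to $\partial T_{\vec f_i}$, and by stability of (quasi-)geodesics in a tree, $\hat\gamma_i(T_{\vec e_i})$ lies within bounded Hausdorff distance of $T_{\vec f_i}$, with a $K$-dense image in $T_{\vec f_i}$ away from $S'$. Post-composing with a closest-point projection onto $T_{\vec f_i}$ yields a map $\phi_i\colon T_{\vec e_i}\to T_{\vec f_i}$ which is still a quasi-isometry (respectively an almost-isometry), whose boundary map equals $g|_{V_i}$. Now define $f\colon T_q\to T_q$ by $f|_{T_{\vec e_i}}=\phi_i$, and let $f$ send $S$ into $S'$ by any map. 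For $x\in T_{\vec e_i}$ and $y\in T_{\vec e_j}$ with $i\neq j$, the tree identity $d(x,y)=d(x,v_i)+d_S(v_i,v_j)+d(v_j,y)$ (where $v_i$ is the unique vertex of $S$ adjacent to $T_{\vec e_i}$) and the analogous identity at the image side produce the bound $|d(f(x),f(y))-d(x,y)|\leq 2\operatorname{diam}(S)+2\operatorname{diam}(S')+2K$, while the $K$-density of each $\phi_i(T_{\vec e_i})$ inside $T_{\vec f_i}$ ensures the image of $f$ is coarsely dense. Thus $f$ is a quasi-isometry (respectively a $(1,K')$-almost-isometry) of $T_q$.

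Finally, by construction the boundary map of $f$ agrees with $g$ on each $V_i$, hence globally on $\partial T_q$, so Lemma \ref{lem:faithfull} gives $f\sim g$ and therefore $g\in\Gamma$. The main delicate point I expect is the almost-isometry case: one must preserve the multiplicative constant equal to $1$ when gluing across $S$ and $S'$, but the tree decomposition of distances above reduces the error to a purely additive one controlled by the diameters of $S,S'$ and the constants of the $\phi_i$, so this obstruction is mild rather than serious.
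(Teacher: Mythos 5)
Your proof follows essentially the same route as the paper's: use compactness and the clopen basis to partition $\partial T_q$ into finitely many halftree boundaries on which $g$ agrees with some $\gamma_i\in\Gamma$, modify each $\gamma_i$ near the finite core so that it maps $T_{\vec e_i}$ onto the region of the tree corresponding to $g(\partial T_{\vec e_i})$, glue, and verify the quasi-isometry inequalities via additivity of the tree metric across the finite subtrees $S$ and $S'$. The one inaccuracy is your assumption that $g(V_i)$ is the boundary of a \emph{single} halftree $T_{\vec f_i}$: a quasi-isometry need not carry halftree boundaries to halftree boundaries, and $g(V_i)=\gamma_i(V_i)$ is in general only a finite disjoint union $\bigsqcup_j\partial T_{\vec f_{i,j}}$ (this is exactly how the paper sets it up, taking $\gamma_i(T_{\vec e_i})=\bigcup_j T_{\vec e_{i,j}}$). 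This costs nothing: replace your target by $\bigcup_j T_{\vec f_{i,j}}$ together with the finite bridging subtree, and the closest-point projection and additive error estimates go through with constants also depending on the diameter of that bridge. Note also that your two-sided bound $|d(f(x),f(y))-d(x,y)|\leq 2\operatorname{diam}(S)+2\operatorname{diam}(S')+2K$ is literally correct only in the almost-isometry case; for general quasi-isometries the multiplicative constant $\lambda$ must appear, or one argues as the paper does, proving only the upper coarse-Lipschitz bound and exhibiting a quasi-inverse $\gamma'$ of the same form.
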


\begin{proof}
Fix $g\in \G{\Gamma}$, and let
$\left\{\partial T_{\vec e_1},\ldots,\partial T_{\vec e_n}\right\}$ be a disjoint cover
of $\partial T$ such that $g|_{\partial T_{\vec e_i}}=\gamma_i|_{\partial T_{\vec e_i}}$
for some $\gamma_i\in\Gamma$. For each $1\leq i\leq n$ let $\vec e_{i,1},\ldots,\vec e_{i,m}$
be such that $\left\{\partial T_{\vec e_{i,1}},\ldots,\partial T_{\vec e_{i,m}}\right\}$
is a disjoint cover of $g T_{\vec e_i}$. We may assume, by changing each $\gamma_i$
on a bounded set, that $\gamma_i(T_{\vec e_i}) = \bigcup_{j=1}^m T_{\vec e_{i,j}}$.

Let us define
$$
\gamma(v)=
\begin{cases}
\gamma_i(v)&\text{for $v\in T_{\vec e_i}$, and}\\
v&\text{otherwise.}
\end{cases}
$$
It is clear that if $\gamma$ is in $\Gamma$, then it induces the element $g$ on the boundary.

Let $\lambda, K$ be the maximal quasi-isometry constants of $\gamma_i$, and let $M$
be the diameter of the bounded set $\{\vec e_1, \ldots, \vec e_n, \gamma\vec e_1,\ldots,\gamma\vec e_n\}$. 

We claim the following: for all $v,w\in T_q$,
$d(\gamma v,\gamma w)\leq \lambda d(v,w)+ (2K+M)$. Indeed, if $v,w$ are both
in some $T_{\vec e_i}$ or in $T_q \setminus \bigcup _{i=1}^nT_{\vec e_i}$,
then the inequality is obvious. If $v\in T_{\vec e_i}$ and $w \in T_{\vec e_j}$
for some $i\ne j$, then $d(v,w)=d(v,e_i)+d(e_i,e_j)+d(e_j,w)$ and therefore 
\begin{align*}
d(\gamma x,\gamma y)= & d(\gamma_i x,\gamma_i e_i) + d(\gamma_i e_i, \gamma_j e_j) + d(\gamma_j e_j, \gamma_j w)\\
\leq & \lambda d(\gamma v,\gamma e_i) + K + M + \lambda d(e_j,w) +K\\
\leq & \lambda d(x,y) + 2K + M.
\end{align*}
Similarly, one shows this inequality for $v\in T_{\vec e_i}$ and $w \in T_q
\setminus \bigcup _{i=1}^n T_{\vec e_i}$.

Furthermore, the element $\gamma'$, defined as
$$
\gamma'(v)=
\begin{cases}
\gamma_i ^{-1}(v)&\text{if $v\in\bigcup _{j=1}^m T_{\vec e_{i,j}}$, and}\\
v&\text{otherwise,}
\end{cases}
$$
satisfies that for all $v,w\in T_q$, $d(\gamma' v,\gamma' w)\leq \lambda' d(v,w)+ (2K'+M')$,
for the appropriate $\lambda'$, $K'$, and $M'$. Moreover, it is easy
to see that $\gamma\gamma'\sim \id \sim \gamma'\gamma$, from which we deduce that $\gamma$ is a
quasi-isometry. 
\end{proof}

\begin{lemma}\label{lem:nir-commutator}
Every element $g$ in $\Gamma$ that fixes an open set at the boundary is a commutator
of two elements fixing a common set at the boundary.
\end{lemma}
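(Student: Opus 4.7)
The plan is to realize $g$ as a commutator via an Anderson--Mazur ``swindle'' at the boundary, localized so that both commutator factors fix a common halftree boundary pointwise.

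\emph{Setup.} Since $g$ fixes an open subset of $\partial T_q$, and the clopens $\partial T_{\vec e}$ form a basis of the Cantor topology, I choose an oriented edge $\vec e$ with $\partial T_{\vec e}$ contained in the open fixed set of $g$, and then a strictly smaller halftree boundary $\partial T_{\vec f}\subsetneq\partial T_{\vec e}$. Set $K:=\partial T_{-\vec e}$ (so that $g$ is the identity on $\partial T_{\vec e}\supset\partial T_{\vec f}$) and $S:=\partial T_{\vec f}$; this $S$ will be the common fixed open set of both commutator factors.

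\emph{Construction.} By Lemmata \ref{lem:faithfull} and \ref{lem:qi-is-full}, $\Gamma$ embeds in $\homeo(\partial T_q)$ as a topological full group, and it acts extremely proximally on $\partial T_q$ (since $\aut(T_q)\subset\Gamma$ does, by Theorem \ref{thm:parr}), so the Cantor-set machinery of Section \ref{sec:Cantor} applies. Apply Lemma \ref{lem:bip-element} to the pair of clopens $K\subsetneq\partial T_{-\vec f}$ to produce $h\in\Gamma_{\partial T_{-\vec f}}'$ whose iterates $h^n(K)$, $n\in\B Z$, are pairwise disjoint. Being supported in $\partial T_{-\vec f}$, $h$ fixes $S$ pointwise. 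Define
$$
a(x)=\begin{cases}(h^ngh^{-n})(x)&\text{if $x\in h^n(K)$ for some $n\geq 0$,}\\ x&\text{otherwise.}\end{cases}
$$
Pairwise disjointness of the $h^n(K)$ makes this unambiguous. Each $h^ngh^{-n}$ fixes $S$ (since $h^{-n}(S)=S\subset\partial T_{\vec e}$ is fixed pointwise by $g$, after which $h^n$ restores $S$), hence so does $a$. The standard telescoping identity $hah^{-1}=\prod_{n\geq 1}h^ngh^{-n}=g^{-1}a$ gives $g=[a,h]$, while both $a$ and $h$ fix $S$ pointwise, which is the desired conclusion.

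\emph{Main obstacle.} The delicate point is to verify that $a$ really belongs to $\Gamma$, not merely to $\homeo(\partial T_q)$; that is, $a$ must be induced by a genuine quasi-isometry (resp.\ almost-isometry) of $T_q$. The cleanest route is to arrange, in the construction of $h$ from Lemma \ref{lem:bip-element}, that $h$ acts as the identity on the subtree $T_{\vec f}$ and as an isometry on $T_{-\vec f}$; this is possible because the building blocks $\tau_{\beta,W}$ used there can be chosen as piecewise isometries by taking the swap data $\beta$ from $\aut(T_q)$. With such a choice, each conjugate $h^ngh^{-n}$ has the same almost-isometry constants as $g$ on its support, which lives inside the sub-halftree $h^n(T_{-\vec e})\subset T_{-\vec f}$; since these supports are pairwise disjoint sub-halftrees of shrinking diameter, the infinite product $a$ assembles into a bona fide almost-isometry of $T_q$, hence an element of $\Gamma$.
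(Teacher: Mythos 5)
Your overall strategy --- an infinite ``swindle'' $a=\prod_{n\geq0}\presp{h^n}g$ on pairwise disjoint supports, the telescoping identity $\presp{h}a=g^{-1}a$ giving $g=[a,h]$, and arranging both factors to fix a common halftree boundary --- is exactly the skeleton of the paper's proof, and your bookkeeping (well-definedness of $a$, both factors fixing $S=\partial T_{\vec f}$, the commutator identity) is correct. But you have correctly located the crux, namely $a\in\Gamma$, and your resolution of it does not work as stated. You ask for an $h$ that is the identity on $T_{\vec f}$, acts ``as an isometry'' on $T_{-\vec f}$, and has all iterates $h^n(K)$ pairwise disjoint. No genuine tree automorphism can do this: an automorphism fixing $T_{\vec f}$ pointwise fixes a vertex, hence is elliptic, and then the edges $h^n(-\vec e)$ all lie at a fixed distance from that vertex, so by pigeonhole $h^n(K)=h^m(K)$ for some $n\neq m$. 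So the $h$ produced by Lemma \ref{lem:bip-element} is irreducibly a \emph{piecewise} automorphism, and the constants of its powers $h^n$ a priori grow linearly in $n$; the assertion that $\presp{h^n}g$ has the same constants as $g$ on its support then requires showing that $h^n$ restricted to $K$ coincides with a single automorphism with uniformly controlled behaviour (this happens to be true for the specific $\alpha$ of Lemma \ref{lem:bip-element}, whose action on the nested pieces is eventually governed by powers of the one hyperbolic element $\presp\beta\gamma$, but that computation is the whole content and you do not do it). Finally, ``pairwise disjoint sub-halftrees of shrinking diameter'' is not a usable justification --- halftrees of $T_q$ all have infinite diameter, and $a$ is so far only a boundary homeomorphism; to conclude $a\in\QI(T_q)$ or $\AI(T_q)$ you must exhibit a map on the vertices of $T_q$ inducing it and estimate $d(av,aw)$ for $v,w$ in different pieces.

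The paper avoids all of this by choosing the shifting element to be an honest translation $t\in\aut(T_q)$ along a geodesic in $T_{-\vec e}$: then $\presp{t^n}{g^{-1}}$ literally has the constants of $g^{-1}$, the supports $t^nT_{\vec e}$ are isometric translates marching along the axis, and the infinite product $f$ is defined directly on vertices, giving $g=[t,f]$ at once. Of course $t$ fixes no open subset of the boundary, so the paper then multiplies $t$ by a bounded swap $s$ exchanging $t^{-1}T_{\vec e}$ and $t^{-2}T_{\vec e}$; since $s$ commutes with $f$ the commutator is unchanged, while $ts$ and $f$ now both fix $t^{-1}T_{\vec e}$. If you want to keep your version, the honest fix is essentially to reverse-engineer this: take $h$ to agree on $\bigcup_{n\geq0}h^n(K)$ with a fixed hyperbolic automorphism, which is precisely the paper's $t$ corrected by a swap.
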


\begin{proof}
Let $\supp g\subset T_{\vec e}$.
Let $\{x_n\}_{n\in\B Z}$ be a biinfinite line geodesic contained in $T_{-\vec e}$
and such that $x_0$ is the starting point of $\vec e$.
	
Let $t\in\aut(T_q)$ be a translation along $\{x_n\}_{n\in\B Z}$, and let $f$ be
the function defined by
$$
f(v)=
\begin{cases}
\presp{t^n}g^{-1}(v)&\text{for $v\in t^n(T_{\vec e})$ and $n\geq 0$, and}\\
v&\text{elsewhere.}
\end{cases}
$$
The function $f$ is in $\Gamma$ since all the functions $\presp{t^n}g^{-1}$
have the same quasi-isometry constants and $[t,f]=\presp{t}f f^{-1}=g$.

Let $s$ be a 1-almost-isometry defined as 
$$
s(v)=
\begin{cases}
t(v)&\text{for $v\in t^{-2}T_{\vec e}$,}\\
t^{-1}(v)&\text{for $v\in t^{-1}T_{\vec e}$,}\\
v&\text{otherwise.}
\end{cases}
$$
Then we still have $[ts,f]=g$ as $s$ commutes with $f$. However both
$st$ and $f$ fix $t^{-1}T_{\vec e}$.  Thus the claim.
\end{proof}

\begin{theorem}\label{thm:nir-perfect}
The group $\Gamma$ is perfect and has commutator width at most 2.
\end{theorem}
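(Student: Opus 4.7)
The plan is to combine the fragmentation Lemma \ref{lem:fragmentation}(3) from the main text with Lemma \ref{lem:nir-commutator} above to show that any $f \in \Gamma$ decomposes as a product of two commutators; this simultaneously yields $\Gamma = \Gamma'$ and the bound on commutator width.

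First, I would verify that $\Gamma$ satisfies the hypotheses of Theorem \ref{thm:C}, so that Lemma \ref{lem:fragmentation} is available: $\Gamma$ is a topological full group by Lemma \ref{lem:qi-is-full}, acts faithfully on $\partial T_q$ by Lemma \ref{lem:faithfull}, and acts extremely proximally because $\aut(T_q) \subseteq \Gamma$ acts minimally and non-parabolically on $T_q$ (the action is even transitive on vertices), so Theorem \ref{thm:parr} applies to $\aut(T_q)$, and extremal proximality on $\partial T_q$ is inherited by any larger subgroup of $\homeo(\partial T_q)$. I would then fix once and for all a nontrivial commutator $g \in \Gamma$, which exists because $\aut(T_q) \subseteq \Gamma$ is nonabelian.

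For the core step, given any nontrivial $f \in \Gamma$, Lemma \ref{lem:fragmentation}(3) applied to the pair $(f,g)$ produces $h \in \Gamma'$ such that $\tilde f := \presp{h}g \cdot f$ is supported outside some nonempty clopen $U \subseteq \partial T_q$; in other words, $\tilde f$ fixes the open set $U$ at the boundary. Lemma \ref{lem:nir-commutator} then expresses $\tilde f$ as a single commutator in $\Gamma$, while $\presp{h}(g^{-1}) = (\presp{h}g)^{-1}$ is a conjugate of the commutator $g^{-1}$, hence itself a commutator. Rearranging the defining equation gives $f = \presp{h}(g^{-1}) \cdot \tilde f$, a product of two commutators, which is the desired conclusion.

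The only delicate point is confirming the extremal proximality of $\Gamma$ on $\partial T_q$, but this is immediate from the containment $\aut(T_q) \subseteq \Gamma$ and Theorem \ref{thm:parr}; every other step is a direct assembly of lemmas already in place, so no substantial technical obstacle is anticipated. Note also that there is no circularity with Corollary \ref{cor:nir}: the fragmentation and commutator lemmas cited here do not presuppose perfectness of $\Gamma$, and the commutator $g$ is produced inside $\Gamma$ by bare hands from $\aut(T_q)$.
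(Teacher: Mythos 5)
Your argument is correct, but it takes a genuinely different route from the one in the appendix. The paper's proof stays entirely inside the appendix: given $1\neq f\in\Gamma$, it picks a halftree $T_{\vec e}$ with $f\partial T_{\vec e}\cap\partial T_{\vec e}=\varnothing$ and $f\partial T_{\vec e}\cup\partial T_{\vec e}\neq\partial T$, and uses the topological full group property (Lemma \ref{lem:qi-is-full}) to build an explicit swap $h\in\Gamma$ that equals $f$ on $\partial T_{\vec e}$, equals $f^{-1}$ on $f\partial T_{\vec e}$, and is the identity elsewhere; then both $h^{-1}$ and $hf$ fix nonempty open sets, so each is a single commutator by Lemma \ref{lem:nir-commutator}. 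You instead import Lemma \ref{lem:fragmentation}(3) from Section \ref{sec:Cantor}, which forces you to verify its standing hypotheses (faithfulness, full group, and extremal proximality via Theorem \ref{thm:parr} applied to $\aut(T_q)\subseteq\Gamma$ --- all correctly checked), and to seed the argument with a fixed nontrivial commutator $g$ from $\aut(T_q)$; the identity $f=\presp{h}{(g^{-1})}\cdot\tilde f$ with $\tilde f$ fixing a clopen then does the job, since a conjugate of a commutator is a commutator and $\tilde f$ is one by Lemma \ref{lem:nir-commutator}. (The degenerate case $\tilde f=e$ only helps.) Your route is essentially the computation the paper performs later in Corollary \ref{cor:nir}, specialised to a commutator $g$, and it yields the slightly stronger fact that every element is a conjugate of $g^{-1}$ times a single commutator; the paper's route is more self-contained and more elementary, needing neither Theorem \ref{thm:parr} nor the fragmentation machinery. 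Both correctly give perfectness and commutator width at most $2$.
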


\begin{proof}
It suffices to show that each element of\/ $\Gamma$ can be written as a product of two elements
of\/ $\Gamma$ which fix an open set at the boundary, as
both of them are single commutators by Lemma \ref{lem:nir-commutator}.
	
Let $1\ne g\in\Gamma$, there exists $\omega\in\partial T$ such that $g(\omega)\ne\omega$.
Let $T_{\vec e}$ be a halftree whose boundary contains $\omega$ and
for which $g\partial T_{\vec e}$ and $\partial T_{\vec e}$ are
disjoint, and do not cover the whole of $\partial T$.
Let $h\in\G{\Gamma}=\Gamma$ be the map defined by:
$$
h(x)=\begin{cases}
g(x)&\text{if $x\in \partial T_{\vec e}$,}\\
g^{-1}(x)&\text{if $x\in g\partial T_{\vec e}$},\\
x&\text{otherwise.}
\end{cases}
$$
We see that $hg$ fixes $T_{\vec e}$, and thus the claim.
\end{proof}

\begin{remark}
Since, for all $q_1,q_2\geq 2$, the trees $T_{q_1}$ and $T_{q_2}$ are quasi-isometric, the
groups $\QI(T_{q_1})$ and $\QI(T_{q_2})$ are isomorphic.
\end{remark}

\section*{Acknowledgements}
The first author would like to thank
Mati Rubin for a fruitful discussion and
the Technion --- Israel Institute of Technology
for hospitality when working on the preliminary version of this paper.
The second author would like to thank
Hebrew University of Jerusalem for hospitality during the preparation of the paper.
The authors gratefully acknowledge the support from the Erwin Schr{\"o}dinger
Institute in Vienna at the final stage of the work, during the meeting
`Measured group theory 2016'.

\bibliography{bdd_simplicity}
\bibliographystyle{acm}

\end{document}